\def\N{\mathbb{N}}
\def\R{\mathbb{R}}
\def\Q{\mathbb{Q}}
\def\C{\mathrm{C}}
\def\H{\mathcal{H}}
\def\M{\mathcal{M}}
\def\B{\mathbb{B}}
\def\P{\mathcal{P}}
\def\F{\mathcal{F}}
\def\1{\mathbbm{1}}
\def\L{\mathcal{L}}
\def\G{\mathbb{G}}
\def\E{\mathbb{E}}
\def\A{\mathbb{A}}
\def\D{\mathcal{D}}
\def\F{\mathcal{F}}
\def\BC{\mathrm{BC}}
\def\eps{\varepsilon}
\def\supp{\mathrm{supp}}
\newcommand{\mres}{\mathbin{\vrule height 1.6ex depth 0pt width 0.13ex\vrule height 0.13ex depth 0pt width 1.3ex}}
\newcommand\eqrefp[1]{eq. \eqref{#1}, p. \pageref{#1}}
\newcommand\refp[1]{\thref{#1}, p. \pageref{#1}}
\newcommand\citep[2]{\cite{#1}, p. #2}
\newtheorem{theorem}{Theorem}
\newtheorem{proposition}[theorem]{Proposition}
\newtheorem{lemma}[theorem]{Lemma}
\newtheorem{corollary}[theorem]{Corollary}
\newtheorem{definition}[theorem]{Definition}
\newtheorem{remark}[theorem]{Remark}
\newtheorem*{proof2}{\textbf{Proof.}}
\newenvironment{proof}{\begin{proof2}}{ {\hspace*{0pt}\hfill $\square$} \end{proof2}}
\begin{document}


\title{\textbf{ Generalised Young Measures\\ and\\ characterisation of gradient Young Measures }}
\author{Tommaso Seneci }
\date{}
\maketitle
\thispagestyle{empty}

\setlength{\parindent}{0pt}



\begin{abstract}
Given a function $f\in C(\R^d)$ of linear growth, we give a new way of representing accumulation points of
\begin{equation}
\int_\Omega f(v_i(z))d\mu(z),
\end{equation}
where $\mu\in \M^+(\Omega)$, and $(v_i)_{i\in \N}\subset L^1(\Omega,\mu)$ is norm bounded. We call such representations "generalised Young Measures". With the help of the new representations, we then characterise these limits when they are generated by gradients, i.e. when $v_i = Du_i$ for $u_i\in W^{1,1}(\Omega,\R^m)$, via a set of integral inequalities.
\end{abstract}

\newpage
\tableofcontents

\newpage
\section{Intro}

\subsection{Terminology and symbols}

\begin{itemize}
\item For a vector $v\in \R^m$, we write $|v| = \sqrt{\sum_{i=1}^m v_i^2}$ otherwise specified.
\item Given a function $f:X\to Z$ and $W$ an arbitrary set, we call
\begin{align}
graph(f) \equiv & graph_X(f) = \{ (x,f(x))\in X\times Z \text{ such that } x\in X\}, \\
& graph_{X\times W}(f) = \{ (x,w,f(x))\in X\times W\times Z \text{ such that } x\in X, w\in W \}.
\end{align}
\item We say that a function $f:\R^d\to \R$ has $p$ growth if there is $C>0$ such that
\begin{align}
|f(z)| \leq C(1+|z|^p).
\end{align}
\item The identity matrix is indicated by $\1$, or $\1_d\in \R^{d\times d}$ if we need to specify the dimension.
\item The Lebesgue measure is indicated by $dx$, or $\L^n$ depending on the context. The set of finite Borel d-vector measures is indicated by $\M(\Omega,\R^d)$. For $E\subset \M(\Omega)$, $E^+$ is the set of positive Borel measures that belong to $E$.
\item For a measure $\mu\in \M(\Omega)^+$, we write $L^p(\Omega,\mu,\R^d)$ to mean the space of $\mu$-measurable functions $f \colon \Omega\to \R^m$ such that
\begin{align}
\int_\Omega |f(x)|^pd\mu(x) <+ \infty.
\end{align}
\item For $\mu\in \M(\Omega,\R^d)$, we write its restriction to a $\mu$-measurable set $E\subset \Omega$
\begin{align}
\mu\mres E \colon A \text{ Borel set} \mapsto \mu(E\cap A).
\end{align}
\item If $\mu\in \M(\Omega)$ and $f\in \L(\Omega,\mu,\R^d)$, we call $fd\mu$ the measure in $\M(\Omega,\R^d)$ defined by
\begin{align}
U \mapsto \int_U fd\mu,
\end{align}
where $U$ runs through all \text{$\mu$-measurable sets}.
\item If $X$ is any space, the Dirac delta is indicated, for $x\in X$, by
\begin{align}
\int_X f(y) d\delta_x(y) = f(x),
\end{align}
where $f\colon X\to \R^d$ is an arbitrary function.
\item Let $X$ be a metric space, $\mu\in \M^+(\Omega)$ and $(f_j)_{j\in \N} \subset L^p(\Omega,\mu,\R^d)$. We say that the sequence $(f_j)_j$ is p-equi-integrable (or simply equi-integrable if $p$ is clear from the context) if it is norm bounded and
\begin{align}
\lim_{k\uparrow \infty} \ \sup_{j\in \N} \ \int_{|f_j|^p>k} |f_j|^pd\mu = 0.
\end{align}
\item The set of test functions is
\begin{align}
\D(\Omega,\R^d) = C^\infty_c(\Omega,\R^d) = \{ f:\Omega\to \R^d: f \text{ is infinitely differentiable and has compact support} \}.
\end{align}
We do not insist on the topology this space is endowed with, as it is standard and nowhere used in the work.
\item For the derivative of a function $u\in L^1(\Omega,\R^m)$ we mean the matrix-valued distribution $Du = [\partial_j u_i]_{i,j} \in (\D(\Omega,\R^m)^*)^n$ such that
\begin{align}
\int_\Omega u_i \partial_j \phi dx = - \langle \partial_j u_i,\phi\rangle.
\end{align}
\item The Sobolev space of functions with integrable derivatives is
\begin{align}
W^{1,1}(\Omega,\R^m) = \{ u\in L^1(\Omega,\R^m) : Du\in L^1(\Omega,\R^{m\times n}) \}.
\end{align}
\item The set of functions of bounded variation is
\begin{align}
BV(\Omega,\R^m) = \{ u\in L^1(\Omega,\R^m) : Du\in \M(\Omega,\R^{m\times n}) \}.
\end{align}
\item For a function $u\in BV(\Omega,\R^m)$ we can write
\begin{align}
Du = \nabla u d\L^n\mres \Omega + D^s u, \quad D^s u = D^j u\mres J_u + D^c u
\end{align}
where $\nabla u d\L^n$ is the absolutely continuous part, $D^j u$ is the jump part concentrated on a $n-1$ rectifiable set $J_u$, and $D^s$ is the Cantor part, which is absolutely continuous with respect to $\H^{n-1}$.
\item For a function $U\in BV(\Omega,\R^m)$, we call
\begin{align}
BV_U(\Omega,\R^m) = \left\{ \begin{matrix}
u\in BV(\Omega,\R^m): \text{there is a sequence } u_j\in \D(\Omega,\R^m) \\
\quad \text{ such that } u_j \xrightarrow{\text{weak* in BV}} u-U
\end{matrix} \right\}.
\end{align}
\item The set of special functions of bounded variation is
\begin{align}
SBV(\Omega,\R^m) = \{ u\in BV(\Omega,\R^m): D^s u = D^j u \text{, or equivalently } D^c u=0\}.
\end{align}
\end{itemize}

\newpage

\subsection{Introduction}

Young Measures were first introduced by Young in \cite{young1937generalized} to study the minima of integral energies of the form
\begin{equation}
\inf\left\{ \int_0^1 f(u(t),u'(t))dt: u\in C^1([0,1]), u(0)=a, u(1)=b, \|u'\|_\infty \leq K \right\}.
\end{equation}
The author wanted to understand what conditions on $f$ would guarantee the existence of a minimizing curve $u(t)$. Young had the intuition that, for an extremely general class of functions $f$, minimising sequences always converge to a "generalised" curve $t\mapsto (u(t),\nu_t)$ where $\nu$ is a probability measure on the image of $f$. This translates to the following equality
\begin{equation}
\inf_{u(0)=1,u(1)=b} \int_0^1 f(u(t),u'(t))dt = \lim_j \int_0^1 f(u_j(t),u_j'(t))dt = \int_0^1 \int_\R f(u(t),y)d\nu_t(y) dt.
\end{equation}
So the question of the existence of a minimiser can be reformulated as to whether such objects are gradients of a curve or not. $\nu_t$ might fail to be a gradient when the minimizing sequence oscillates. \\

Young's original work focused on the case $n=1$ and was carried out via functional analytic methods. This approach was later extended in \cite{ball1989version,berliocchi1973integrandes} to higher dimensions. We call these generalised functions "oscillation Young Measures". The method developed by Young is not powerful enough to tackle problems arising in modern mathematics, as it can only handle sequences $(v_j)_{j\in\N}$ that are bounded in $L^\infty$ rather than in some Lebesgue space $L^p$. The first attempt to well represent generalised limits of integrable functions is due to DiPerna and Majda, \cite{diperna1987oscillations}. Functions $v_j\colon \Omega\to \R^d$ are seen as Dirac deltas on the product space $\Omega\times \R^d$, which is subsequently compactified. An accumulation point, in the sense of these new generalised functions, is then found. Such an accumulation point is a measure defined on an abstract compactification of $\Omega\times \R^d$, and as such, it is not clear how to represent it in the original, non-compact, space. In \cite{alibert1997non}, an explicit formula for such accumulation points was obtained for a class of integrands that grow "nicely" infinity.

In what follows, we give a general formula for describing Young Measures for a large class of integrands. The construction of Young Measures follows mainly the work by DiPerna and Majda \cite{diperna1987oscillations} and lecture notes taken from a class given by Kristensen \cite{kristensen2015young}, see also \cite{rindler2018calculus}, chapter 12. This generalisation is based on the canonical way of constructing Hausdorff compactifications starting from continuous functions, see \cite{chandler1976hausdorff}. A small reduction lemma gives a clearer, and somehow geometrical interpretation of such limits. This formula captures oscillations at infinity, which are now let occur. We also prove a few structure theorems that relate different compactifications and Young Measures representations to each other. \\

This generalisation of Young Measures is then applied to study extensions and variations - within the class of functions of bounded variations $BV(\Omega,\R^m)$ - of energies that depend on gradients
\begin{equation} \label{Introduction:eq:functional_gradient}
u\mapsto \int_\Omega f(Du(x))dx,
\end{equation}
where $u\in \D(\Omega,\R^d), \Omega\subset \R^n$ is a bounded domain, and $f\in C(\R^{m\times n})$ has linear growth. Given any such $f$, there is no way to extend \eqref{Introduction:eq:functional_gradient} to the class BV so that such extension is continuous with respect to sequential weak* convergence in $C_0(\Omega,\R^{m\times n})^*$. We can however find an extension which is lower semi-continuous for certain $f$s. In \cite{morrey1952}, Morrey established the equivalence of lower semi-continuity of \eqref{Introduction:eq:functional_gradient} to a condition named "quasi-convexity", which can be written as a Jensen-type inequality
\begin{equation} \label{Introduction:eq:quasi_convexity}
\int_\Omega f(z+D\phi(x))dx \geq |\Omega|f(z) \quad \forall \phi\in \D(\Omega).
\end{equation}
The original result by Morrey works in the setting of weak* convergence in $W^{1,\infty}(\Omega,\R^m)$, and it was subsequently extended to the case $W^{1,p}(\Omega,\R^m), 1\leq p<\infty$ and weak convergence in \cite{acerbi1984semicontinuity}, for positive integrands. As for signed integrands, the same result was proven in \cite{ball1990lower} and it is one of the first examples where Young Measures are employed for proving lower semi-continuity in the space of gradients. To be more specific, \eqref{Introduction:eq:quasi_convexity} can be rephrased as a Jensen-type inequality for measures of the form
\begin{equation} \label{pullback_gradients}
\{ \nu_x: \nu_x = D\phi(x)_\# d\L^n\mres \Omega, \phi\in C_c^\infty(\Omega, \R^m) \},
\end{equation}
where $\nu_x$ acts on $f$ in the following way:
\begin{equation}
\int_\Omega f(z+D\phi(x))dx = \int_\Omega \int_{\R^{m\times n}} f(z+w) d\nu_x(w) dx \equiv \int_\Omega \langle \nu_x,f\rangle dx.
\end{equation}
The lower semi-continuity of \eqref{Introduction:eq:functional_gradient} becomes a functional analytic inequality of the form
\begin{align}
\int_\Omega \langle \nu_x,f\rangle dx \geq \int_\Omega f(Du(x))dx \quad \text{and} \quad Du(x) = \int_{\R^{m\times n}} zd\nu_x,
\end{align}
and in this case we call $x\mapsto \nu_x$ a "Gradient Young Measure". This class can be seen as the closure of the set \eqref{pullback_gradients} in the weak* topology of measures over the graph of $f$. The opposite is also true and was proven for the first time in \cite{kinderlehrer1991characterizations,kinderlehrer1994gradient}, i.e. every measure-valued function $x\mapsto \nu_x$, for which a Jensen's type inequality holds against quasi-convex functions of suitable growth, is the limit of a sequence of gradients. \\

The aforementioned results hold in the setting of weak convergence in $W^{1,p}, 1\leq p<\infty$ and weak* convergence in $W^{1,\infty}$. This is a natural condition to assume when $p>1$, but not when $p=1$, as the Lebesgue space $L^1(\Omega,\L^n)$ is not reflexive. In particular, a bounded sequence in $L^1(\Omega,\L^n)$ can concentrate and converge to measures that are singular with respect to the Lebesgue measure. In terms of gradients, the closure of $W^{1,1}(\Omega,\R^m)$ so that its unit ball is weak* compact is the set of functions of bounded variations $BV(\Omega,\R^m)$, precisely the set of functions whose derivatives are measures. This concentration phenomenon is exclusive of the case $p=1$, and so regards integrands that have linear growth at infinity. It turns out, as proven in \cite{ambrosio1992relaxation}, that when $f$ has linear growth and it's quasi-convex, the integral functional $u\mapsto \int_\Omega f(\nabla u)dx,f\geq 0$ is still lower semi-continuous in $BV(\Omega,\R^m)$ with respect to the weak* topology, but there is a deficit of mass when gradients concentrate. Letting $f$ be so that
\begin{align} \label{Introduction:f_infinity}
f^\infty(z) = \lim_{t\to \infty,z_n\to z} \frac{f(tz_n)}{t}
\end{align}
exists for all $z_n\to z, t\to \infty$, the lower semi-continuous envelope of \eqref{Introduction:eq:functional_gradient} in the space $BV(\Omega,\R^m)$, with respect to sequential weak* convergence, is, for $f$ non-negative,
\begin{equation}
u\mapsto \int_\Omega f(\nabla u(x))dx + f^\infty \left( \frac{D^s u}{|D^s u|}(x) \right) d|D^s u|(x).
\end{equation}
In this case, a Young Measure formulation of the Jensen's-type inequality \eqref{Introduction:eq:quasi_convexity} has to take into account the singular part of $Du$. In the spirit of the previous results, one is tempted to prove a duality-type characterisation of Young Measures with concentrations and quasi-convex functions. Differently from the case without concentration, here we assumed $f^\infty$ to exist as in \eqrefp{Introduction:f_infinity}. However, as shown in \cite{muller1992quasiconvex}, quasi-convex functions can oscillate at infinity, meaning that $f^\infty(z)$ may not exist for some $z\in \R^m$. This suggests that to obtain a Jensen-type inequality and characterisation result for gradient Young Measure in the case $p=1$, it is necessary to specify a compactification at infinity. 

The characterisation for gradient Young Measures when $p=1$ has been already obtained on the so-called "sphere compactification" - functions for which $f^\infty(z)$ exists for all $z$ - see \cite{kristensen2010characterization,kristensen2010relaxation}.  After showing that the class of quasi-convex functions of linear growth is too big to be included within any separable compactification, we reprove the characterisation result for gradient Young Measures on separable compactifications of quasi-convex functions. This restricts the number of quasi-convex functions to be considered at once. However, it is also inevitable because a compactification containing all quasi-convex functions would be so big that its topology would fail to be metrisable and separable. \\

\newpage
\section{Generalised Young Measures on separable compactifications} \label{chap1}

In this section, we construct generalised Young Measures and provide a new geometric representation. Concentration is let "oscillate with different amplitudes at infinity". To do so, we embed a space of functions into a bigger compact set and subsequently use the theory of Hausdorff compactifications.

\subsection{Generalised Young Measures as generalised objects}

Generalised Young Measures are objects that were known to exist since Majda and Di Perna \cite{diperna1987oscillations}, and have been used in a few instances, see for example \cite{fonseca2010oscillations,kruvzik1996explicit}. However, their existence per se does not give enough clarity on their properties, and so makes it hard to work with such objects.

We give a new interpretation and geometric representation that better captures oscillation and concentration effects that occur in limits of the form
\begin{align}
\lim_j \int_\Omega f(v_j(x))d\mu(x),
\end{align}
where $v_j\in L^p(\Omega,\mu,\R^d)$ is a norm-bounded sequence and $f\in C(\R^d)$ has p-growth. Under these assumptions, it is easy to see that, up to a subsequence, $f\circ v_j$ converges to a measure $\nu$; mathematically this means that
\begin{align}
\int_\Omega f(v_j(x)) \phi(x)d\mu(x) \to \int_\Omega \phi(x)d\nu(x)
\end{align}
for all $\phi\in C_0(\Omega)$. It's clear that $\nu=\nu(f)$ is a linear function of $f$. What is not clear is how such dependence can be represented in terms of $\mu$ and $f$. Without a clear representation, it is not possible to set up a system of calculus. This section is dedicated to working out a geometric interpretation of the relation between $\nu$ and $f$, which we will then call Young Measure. We will mainly concentrate on the more interesting and harder case of $p=1$ and $v_j =Du_j$ gradients, where concentration effects create rather complicated structures, and cannot in general be separated from oscillation.

\begin{definition}
A function $f:\R^d \to \R$ is said to have $p$-growth if there is a constant $C\geq 0$ such that
\begin{align}
|f(z)|\leq C(1+|z|^p) \quad \forall z\in \R^m.
\end{align}
When $p=1$, we say that such functions have linear growth.
\end{definition} 

Before proceeding with formal definitions, we give a heuristic interpretation of Young Measures. When $v_j\to v$ strongly in  $L^1(\mu)$ then the limit Young Measure is trivial, meaning that
\begin{align}
\int_\Omega f(v_j(x)) \phi(x) d\mu(x) \to \int_\Omega f(v(x)) \phi(x) d\mu(x)
\end{align}
for all $f\in C(\R^d)$  of linear growth and $\phi\in C_0(\Omega)$. This is a simple consequence of the Vitali convergence \refp{Appendix:Vitali_conv} (or the generalised dominated convergence theorem).

When strong $L^1$ convergence fails, only two things can go wrong:
\begin{itemize}
\item \textbf{oscillation} - $v_j$ oscillates around $\mu$-almost every point $x\in E\subset \Omega$ with $\mu(E)>0$, and generates a probability distribution on the target space
\begin{align}
f(v_j(x)) \leadsto \langle \nu_x,f\rangle = \int_{\R^d} f(z)d\nu_x;
\end{align}
\item \textbf{concentration} - $|v_j|$ concentrates to a measure $0\neq \lambda\in \M^+(\overline{\Omega})$ - equivalently $(x,v_j(x))$ concentrates to the boundary of some compactification of $\Omega\times \R^d$. Around $\lambda$-almost every point, $v_j(x)$ goes to infinity and its "support" collapse to 0. That is to say, 
\begin{align}
f(v_j(x)) \ \leadsto \ \left\{ \frac{f(z)}{|z|} \text{ with } |z|\gg 0 \right\} \ \sim \ \int_{\partial K} f^\infty(w)d\nu_x^\infty(w),
\end{align}
where $K$ is some compactification containing $\R^d$ that extends $f$ to $f^\infty$ on the remainder of $\R^d$ within $K$.
\end{itemize}

\subsubsection{Parametrized measures}

In order to construct Young Measures, we regard functions as maps from a domain $\Omega$ into the set of probability measures over a target space $\R^d$. Ordinary functions $f\colon \Omega\to \R^d,x\mapsto f(x)$ are embedded into maps $\Omega\to \M^+_1(\R^d), x \mapsto \delta_{f(x)}$. Preliminary to the construction, we introduce two basic concepts that are at the core of this theory. 

\begin{definition}
Let $X$ and $Z$ be locally compact, separable metric spaces and $\lambda\in \M^+(X)$. A map $\nu\colon X\to \M^+(Z)$ is said to be $\lambda$-measurable if for each $\phi\in C_0(Z)$ the function $x\mapsto \langle \nu(x),\phi\rangle$ is $\lambda$-measurable.
\end{definition}

We shall often write the measure-valued map $\nu$ as a \textit{parametrized measure} $(\nu_x)_{x\in X}$, where $\nu_x\colon= \nu(x)$. Given a measure $\nu$ on a product space $X\times Z$, it can always be decomposed as a product of its projection onto $X$ and its cross section on $Z$.

\begin{theorem}[Disintegration of measures] \label{s4:disintegration_theorem}
Let $X$ and $Z$ be compact metric spaces and denote by $\pi\colon X\times Y \to Z$ the projection mapping onto the first coordinate $\pi(x,y)=x$. For $u\in \M^+(X\times Z)$ and $\lambda = \pi_\# \nu\in \M^+(X)$ (the pushforward of $\nu$ via $\pi$) there exists a unique $\lambda$-measurable parametrized measure $(\eta_x)_{x\in X},\eta_x\in \M^+_1(Z)$ such that for all $\phi\in C(X),\psi \in C(Z)$ we have
\begin{align}
\langle \nu,\phi\otimes \psi \rangle = \int_X \langle \eta_x,\psi\rangle \phi(x)d\lambda(x) = \int_X \int_Z 	\psi(z) d\eta_x(z) \phi(x)d\lambda(x).
\end{align}
\end{theorem}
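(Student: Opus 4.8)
The plan is to build the parametrised measure $(\eta_x)$ one test function at a time via Radon--Nikodym, and then glue these pieces together using the separability of $C(Z)$. First, for a fixed $\psi\in C(Z)$, I would define a finite signed measure on $X$ by
$$\nu_\psi(A) = \int_{\pi^{-1}(A)} \psi(z)\,d\nu(x,z), \qquad A\subset X \text{ Borel}.$$
Since $|\nu_\psi(A)| \le \|\psi\|_\infty\, \nu(\pi^{-1}(A)) = \|\psi\|_\infty\,\lambda(A)$, we get $\nu_\psi \ll \lambda$, so the Radon--Nikodym theorem yields $g_\psi \in L^\infty(X,\lambda)$ with $\|g_\psi\|_{L^\infty(\lambda)}\le \|\psi\|_\infty$ and $\nu_\psi = g_\psi\,\lambda$. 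By uniqueness of the derivative the assignment $\psi\mapsto g_\psi$ is $\lambda$-a.e. linear and positivity preserving, and $g_{\1_Z} = 1$ $\lambda$-a.e. (taking $\psi\equiv 1$ gives $\nu_\psi = \lambda$).

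The central difficulty is that each $g_\psi$ is defined only off a $\psi$-dependent null set, whereas I need a single functional $\psi\mapsto g_\psi(x)$ for $\lambda$-a.e. $x$. Here I would invoke the separability of $C(Z)$, valid since $Z$ is compact metric: pick a countable $\Q$-linear subspace $\D\subset C(Z)$ that is dense in the sup norm and contains $\1_Z$. Linearity, positivity and the normalisation $g_{\1_Z}=1$ then hold simultaneously for all $\psi\in\D$ outside one common $\lambda$-null set $N$. For $x\notin N$ the map $\psi\mapsto g_\psi(x)$ is a bounded ($|g_\psi(x)|\le\|\psi\|_\infty$), positive, $\Q$-linear functional on $\D$, hence uniformly continuous, so it extends uniquely to a positive bounded linear functional $L_x$ on all of $C(Z)$. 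The Riesz representation theorem provides a unique $\eta_x\in\M^+(Z)$ with $L_x(\psi)=\langle\eta_x,\psi\rangle$, and $L_x(\1_Z)=1$ forces $\eta_x\in\M^+_1(Z)$; for $x\in N$ I simply set $\eta_x$ equal to a fixed reference probability measure.

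With $(\eta_x)$ in hand, the remaining checks are routine. For each fixed $\psi$, the function $x\mapsto\langle\eta_x,\psi\rangle$ agrees $\lambda$-a.e. with $g_\psi$ — on $\D$ by construction and then for general $\psi$ by sup-norm approximation together with the uniform bound — so it is $\lambda$-measurable, establishing $\lambda$-measurability of $(\eta_x)$. The disintegration identity holds by construction for $\phi=\1_A$ and $\psi\in C(Z)$, namely $\langle\nu,\1_A\otimes\psi\rangle = \nu_\psi(A) = \int_A\langle\eta_x,\psi\rangle\,d\lambda(x)$, and I would promote this to arbitrary $\phi\in C(X)$ by linearity over simple functions followed by a monotone/uniform approximation argument. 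Finally, uniqueness follows because any competing family $(\eta'_x)$ must satisfy $\langle\eta'_x,\psi\rangle = g_\psi(x)$ $\lambda$-a.e. for each $\psi\in\D$, hence coincides with $(\eta_x)$ off a null set on all of $\D$ and therefore on $C(Z)$. I expect the reconciliation of the uncountably many a.e.\ statements into a single null set, handled by the density argument on $\D$, to be the only genuinely delicate point.
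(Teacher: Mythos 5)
The paper does not actually prove this theorem: it only cites \cite{ambrosio2000functions}, p.~57, so there is no in-paper argument to compare against line by line. Your construction is the standard direct proof and it is sound: the Radon--Nikodym step $\nu_\psi=g_\psi\lambda$ with $\|g_\psi\|_{L^\infty(\lambda)}\le\|\psi\|_\infty$, the reduction to a countable $\Q$-linear dense $\D\ni\1_Z$ to collect all exceptional sets into one $\lambda$-null set, the Lipschitz extension of $\psi\mapsto g_\psi(x)$ to $C(Z)$, and Riesz representation all go through exactly as you say; you have correctly identified the only delicate point (the common null set). Two small remarks. First, you do not need to track positivity on $\D$ separately: a functional $L_x$ on $C(Z)$ with $\|L_x\|\le 1$ and $L_x(\1_Z)=1$ is automatically positive (apply the bound to $2\psi-\1_Z$ for $0\le\psi\le 1$), which sidesteps the issue that nonnegative functions in $C(Z)$ need not be limits of nonnegative elements of $\D$. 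Second, for uniqueness you should say explicitly that the identity tested against all $\phi\in C(X)$ pins down $x\mapsto\langle\eta_x',\psi\rangle$ as \emph{the} Radon--Nikodym derivative of $\nu_\psi$ with respect to $\lambda$ (density of $C(X)$ in $L^1(\lambda)$), which is where the a.e.\ equality $\langle\eta_x',\psi\rangle=g_\psi(x)$ comes from. For comparison, the cited proof in Ambrosio--Fusco--Pallara reaches the same conclusion through the more abstract duality $\bigl(L^1(X,\lambda;C(Z))\bigr)^*\eqsim L^\infty_w(X,\lambda;\M(Z))$; your hands-on route is more elementary and makes the role of separability of $C(Z)$ (hence the compact metric hypothesis on $Z$) completely explicit, at the cost of the bookkeeping you describe.
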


For a proof of this result see \citep{ambrosio2000functions}{57}. In this case we write
\begin{align}
\nu = \eta_x d\lambda.
\end{align}

\subsubsection{Generalized Young measures}
	
In what follows, we show how to obtain a good representation of Young Measures on general compactifications. The procedure is adapted from some lecture notes taken from a homonym course given by Jan Kristensen at the University of Oxford, \cite{kristensen2015young}. Some of the results can also be found in \cite{rindler2018calculus}, chapter 12, where they are only proven on the sphere compactification.

Throughout this section, $\Omega\subset \R^n$ is open and bounded, $\mu\in \M^+(\Omega)$ and $(v_j)_j \subset L^p(\Omega,\mu,\R^d)$ is a bounded sequence, $1\leq p<\infty$. Assume
\begin{align}
v_j \rightharpoonup v \text{ in } L^p \text{ when } \ 1<p<\infty \quad \text{ or } \quad v_j \rightharpoonup^* v \text{ in } C_0(\Omega,\R^d)^* \text{ when } \ p=1.
\end{align}
Given a continuous integrand $\Phi\colon \Omega\times \R^d\to \R$ satisfying the p-growth condition
\begin{align} \label{s4_eq:p_subgrowth}
|\Phi(x,z)| \leq C(1+|z|)^p \quad \forall (x,z)\in \Omega\times \R^d, \tag{Gp}
\end{align}
we seek to represent limits of $\left( \int_\Omega \Phi(x,v_j(x))dx \right)_j$ as $j\to\infty$, possibly passing through suitable subsequences.

\begin{remark}
For each $j$, the map $\Phi$ acts on the graph of $v_j$, i.e. $\Phi(x,v_j(x)) = \Phi \circ (x,v_j(x))$. Therefore, we look for the limiting distribution of $(x,v_j(x))$ as $j\to \infty$, and more precisely the $\Phi$-moment of this limiting distribution.
\end{remark}

Morally speaking, since $(\Phi(\cdot,v_j))_j$ is bounded in $L^1(\Omega,\mu)$, $(\Phi(\cdot,v_j)d\mu)_j$ is bounded in $\M(\overline{\Omega})\eqsim C(\overline{\Omega})^*$, so by the abstract compactness principle \refp{Appendix:Banach-Alaoglu}, there exists a limit measure which depends on the integrand $\Phi$.

\subsubsection{Functional analytic setup}

Let $z\in \R^d \mapsto \hat{z} = \frac{z}{1+|z|}\in \B^d$ be a homeomorphism $\R^d\mapsto \B^d$. Define the class of functions of p-growth in the $z$ variable to be
\begin{align}
\G_p = \G_p(\Omega,\R^d) := \left\{ \Phi\in C(\Omega\times \R^d): \sup_{(x,z)} \frac{|\Phi(x,z)|}{(1+|z|)^p} < \infty \right\},
\end{align}
and for $\Phi\in \G_p$ put
\begin{align}
(T\Phi)(x,\hat{z}) := (1-|\hat{z}|)^p \Phi\left( x,\frac{\hat{z}}{1-|\hat{z}|} \right).
\end{align}
Then $T\colon \G_p \to \BC(\Omega\times \B^d)$ is an isometric isomorphism provided $\G_p$ is normed by $\|T\Phi\|_\infty$ and $\BC(\Omega\times \B^d)$ by $\|\cdot\|_\infty$. The inverse operator is
\begin{align}
(T^{-1}\Psi) (x,z) = (1+|z|)^p \Psi\left( x,\frac{z}{1+|z|} \right),
\end{align}
where $\Psi \in \BC(\Omega\times \B^d)$.

The dual operator $T^*\colon \BC(\Omega\times \B^d)^* \to \G_p^*$ is again an isometric isomorphism. We are interested in the limits of $\int_\Omega \Phi(x,v_j(x))dx$ for $\Phi \in \G_p$ and may define $\xi_{v_j} \in \G_p^*$ by
\begin{align} \label{s4:elementary_Y_legacy}
\xi_{v_j}(\Phi) := \int_\Omega \Phi(x,v_j(x))dx, \Phi \in \G_p.
\end{align}
Note
\begin{align}
\|\xi_{v_j} \| = \sup_{\| \Phi \|_{\G_p} } \xi_{v_j}(\Phi) = \int_\Omega (1+|v_j|)^pdx
\end{align}
so $(\xi_{v_j})$ is a bounded sequence in $\G_p^*$. But $\G_p^* \eqsim \BC(\Omega\times \B^d)^*$, and because $\BC$ (hence $\G_p$) is not separable we do not necessarily have sequential compactness. We must restrict the integrands $\Phi$ to a separable subspace of $\G_p$.

\subsection{Hausdorff compactification} \label{secHausCpt}

In this subsection, we present how to construct a compactification of the space $X=\Omega\times \R^d$ from a family of bounded and continuous functions $F\subset \BC(X)$. Roughly speaking, such compactification is a compact set $e_F X$ that contains $X$ as a dense subset and on which all $f\in F$ admit a continuous extension. The idea behind such construction is to look at the graph of each $f\in F$. Because of the boundedness assumption, each function has its image contained in a closed bounded interval of $\R$. Therefore, the graph is embedded into a closed subset of an (infinite-dimensional) hypercube, which is compact in the product topology by Tychonoff theorem, \refp{Appendix:Tychonoff}.

\subsubsection{Preliminaries on Hausdorff compactifications}

Most of the results will be stated without proof, which can be found in chapters 1 and 2 of \cite{chandler1976hausdorff} and in chapter 4 of \cite{folland1999real}.

We first define three classes of functions that are rich enough to determine the topological structure of their domains:
\begin{definition} \label{definitionSeparability}
Consider a family of functions $F\subset\BC(X)$, we say that $F$ separates points from closed sets if for each $C\subset X$ closed and $x\in X\setminus C$ there exists $f\in C(X)$ such that $f(x)\not\in \overline{f(C)}$.
\end{definition}

Next, we define what a compactification of a topological space is.
\begin{definition}
A compactification of $X$ is a compact Hausdorff space $\alpha X$ and an embedding $\alpha\colon X\to \alpha X$ (continuous and so that $\alpha^{-1}:\alpha X\to X$ exists and is continuous) such that $\alpha(X)$ is dense in $\alpha X$.
\end{definition}
It is useful to remark that because $\alpha$ is continuous, any function $f\in C(\alpha X)$ can be restricted to a continuous function on $X$. Indeed, $f \circ \alpha$ is the composition of a bounded continuous function with continuous function, and thus it belongs to $\BC(X)$. On the other hand, because $\alpha(X)$ is dense in $\alpha X$, each $f\in C(\alpha X)$ is uniquely recovered from $f\circ \alpha \in C(X)$.

Given a family $F\subset\BC(X)$ that separates points from closed sets, there is a canonical way of generating a compactification $\alpha X$ on which every $f\in F$ has a continuous extension.
\begin{theorem}\label{existenceCompact}
To each family $F\subset\BC(X)$ that separates points from closed sets, we associate a canonical embedding
\begin{align}
e_F\colon X\to \Pi_{f\in F} \big[ \inf f,\sup f \big],\ x\mapsto \{ f(x) \}_{f\in F}.
\end{align}
$e_F X:= \overline{e_F(X)}$ is a compactification of $X$
\end{theorem}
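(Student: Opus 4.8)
The plan is to verify directly the three defining properties of a compactification: that $e_F X$ is compact Hausdorff, that $e_F(X)$ is dense in it, and that $e_F$ is an embedding. The first two are formal. The product $P := \Pi_{f\in F}[\inf f,\sup f]$ is a product of compact intervals, hence compact by Tychonoff's theorem \refp{Appendix:Tychonoff} and Hausdorff as a product of Hausdorff factors; since $e_F X=\overline{e_F(X)}$ is closed in $P$, it inherits both properties. Density of $e_F(X)$ in $e_F X$ is immediate, as $e_F X$ is by definition the closure of $e_F(X)$.

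The content lies in showing that $e_F$ is an embedding, i.e. a homeomorphism onto its image. Continuity follows from the universal property of the product topology, since each coordinate $x\mapsto \pi_f(e_F(x))=f(x)$ is continuous for $f\in\BC(X)$. For injectivity I would exploit that $X$ is a metric space, hence $T_1$: given $x\neq y$, applying the separation hypothesis to the closed set $C=\{y\}$ yields $f\in F$ with $f(x)\notin\overline{f(C)}=\{f(y)\}$, so that $f(x)\neq f(y)$ and therefore $e_F(x)\neq e_F(y)$.

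The main obstacle is the continuity of $e_F^{-1}$ on $e_F(X)$, equivalently that $e_F$ is open onto its image; this is exactly where separation of points from closed sets is used in full. Given an open $U\subset X$ and $x\in U$, I would set $C:=X\setminus U$ and obtain $f\in F$ with $f(x)\notin\overline{f(C)}$. Picking an open $W\subset\R$ with $f(x)\in W$ and $W\cap\overline{f(C)}=\emptyset$, the cylinder $\pi_f^{-1}(W)$ is open in $P$ and meets $e_F(X)$ in a relatively open neighbourhood of $e_F(x)$. Every point of this set has the form $e_F(y)$ with $f(y)\in W$, which forces $y\notin C$ and hence $y\in U$; thus $\pi_f^{-1}(W)\cap e_F(X)\subseteq e_F(U)$, proving $e_F(U)$ is relatively open. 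Consequently $e_F$ is a homeomorphism onto $e_F(X)$, and $e_F X$ is a compactification of $X$.
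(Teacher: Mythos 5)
Your proof is correct and follows essentially the same route as the paper, which simply notes that the result is a direct consequence of Tychonoff's theorem together with the fact that $e_F$ is continuous and open onto its image when $F$ separates points from closed sets. You supply the details the paper omits (in particular the cylinder-set argument for openness and the use of the $T_1$ property to get injectivity from the separation hypothesis), and these are all sound.
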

The above theorem is a direct implication of Tychonoff's theorem. When $F\subset\BC(X)$ is a family that separates points from closed sets, then $e_F\colon X\mapsto \Pi_{f\in F} \big[ \inf f,\sup f \big]$ is open and continuous, and so it is an embedding.

However, the map $e_F$ makes sense even if $F$ does not separate points from closed sets, and $e_F X$ is always a compact subset of $\Pi_{f\in F} \big[ \inf f,\sup f \big]$.

\begin{lemma}
Let $F\subset\BC(X)$ be a family that separates points from closed sets and $e_F X$ its induced compactification. Each $f\in F$ embeds into $C(e_F(X))$ in an obvious way and admits a unique extension $\overline{f}\in C(e_F X)$.
\end{lemma}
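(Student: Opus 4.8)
The plan is to realise the extension $\overline{f}$ as the restriction to $e_F X$ of the coordinate projection associated with $f$ on the ambient product space. The compactification $e_F X$ sits inside the Tychonoff cube $P := \Pi_{g\in F}[\inf g,\sup g]$, which carries, for each $f\in F$, a continuous coordinate projection $\pi_f\colon P\to[\inf f,\sup f]$. By the very definition of the embedding, $\pi_f(e_F(x)) = \pi_f(\{g(x)\}_{g\in F}) = f(x)$ for every $x\in X$, so that $\pi_f\circ e_F = f$. Since Theorem~\ref{existenceCompact} guarantees that $e_F$ is a homeomorphism onto its image $e_F(X)$, this identity says precisely that the ``obvious'' image of $f$ in $C(e_F(X))$ is $\pi_f|_{e_F(X)} = f\circ e_F^{-1}$.

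Existence of the extension is then immediate. The projection $\pi_f$ is continuous on $P$ by definition of the product topology, and restricting a continuous map to a subspace preserves continuity; hence I would set
\begin{align}
\overline{f} := \pi_f\big|_{e_F X} \in C(e_F X).
\end{align}
By construction $\overline{f}$ agrees with $f\circ e_F^{-1}$ on $e_F(X)$, so $\overline{f}$ is a genuine continuous extension to all of $e_F X$ of the embedded copy of $f$.

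For uniqueness I would invoke the standard density argument. The space $e_F X$ is Hausdorff, being a subspace of the product $P$ of Hausdorff intervals, and $e_F(X)$ is dense in $e_F X$ by the definition of $e_F X$ as the closure $\overline{e_F(X)}$. Any two continuous maps into a Hausdorff space that coincide on a dense subset are equal; applying this to two candidate extensions forces them to agree on $e_F X$, so $\overline{f}$ is the unique continuous extension.

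The only real subtlety, rather than a genuine obstacle, is the bookkeeping of the identification of $f\in\BC(X)$ with an element of $C(e_F(X))$. This rests entirely on the fact, supplied by Theorem~\ref{existenceCompact}, that $e_F$ is an embedding, which is in turn where the hypothesis that $F$ separates points from closed sets enters. Once that identification is fixed, every step above is a formal consequence of the product-topology structure and the density of $e_F(X)$.
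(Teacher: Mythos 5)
Your proof is correct and is in essence the same as the paper's: the paper defines $\overline{f}(y)$ as $\lim_\lambda f(x_\lambda)$ along any net with $e_F(x_\lambda)\to y$, which, since convergence in the product topology is exactly coordinatewise convergence, is precisely your $\pi_f(y)$. Your packaging via the coordinate projection is slightly tidier, since continuity of $\overline{f}$ comes for free and the uniqueness-by-density argument is made explicit, both of which the paper leaves implicit.
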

Every $y\in e_F X$ is an accumulation point of $e_F(X)$, so we can find a net $y_\lambda = \Pi_{f\in F} f(x_\lambda)$ such that $y_\lambda\to y$. A way of extending $f\in F$ is by setting
\begin{align}
\overline{f}\colon e_F X\to \R, y \left( = \lim_\lambda \Pi_{f\in F} f(x_\lambda) \right) \mapsto \overline{f}(y)= \lim_\lambda f(x_\lambda),
\end{align}
which does not depend on the choice of $x_\gamma$ as far as $\lim_\gamma f(x_\gamma) = \lim_\lambda f(x_\lambda)$ for each $f\in F$.

Suppose that we have given a family $F$ and its associated compactification $e_F X$, and we consider the compactification of $F\cup \{ f\}$, where $f\in\BC(X)$. We expect the latter compactification to be bigger than the former, i.e. to be a space where all the previous extensions can be further extended to continuous functions.

\begin{definition} \label{relation_compactifications}
Given two compactifications $\alpha X$ and $\gamma X$ of $X$, we say that $\alpha X\geq \gamma X$ if there exists a continuous function $f\colon \alpha X\to \gamma X$ such that $f\circ \alpha = \gamma$. Moreover, we write $\alpha X \eqsim \gamma X$ if $\alpha X \geq \gamma X$ and $\gamma X \geq \alpha X$, or equivalently if $f\colon \alpha X\to \gamma X$ is a homeomorphism.
\end{definition}

In the following paper, we will sometimes refer to a generic compactification $K$ without specifying the underlying family generating it. The reason why is stated by the following astonishing result.
\begin{theorem}
Given a compactification $\alpha X$ of $X$, there exists a family $F\subset\BC(X)$ that separates points from closed sets such that $e_F X\eqsim \alpha X$.
\end{theorem}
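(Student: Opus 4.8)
The plan is to recover $\alpha X$ as the compactification generated by the family of \emph{all} continuous functions on $\alpha X$, restricted to $X$. Concretely, I would set
\[
F := \{\, g\circ\alpha : g\in C(\alpha X)\,\} \subset \BC(X),
\]
noting that each such $f=g\circ\alpha$ is bounded, because $g$ is continuous on the compact space $\alpha X$, and continuous as a composition of continuous maps, so indeed $F\subset\BC(X)$. The whole argument is purely topological and uses only that $\alpha X$ is compact Hausdorff, hence normal.

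First I would verify that $F$ separates points from closed sets. Let $C\subset X$ be closed and $x\in X\setminus C$. Since $\alpha$ is an embedding, $\alpha(C)$ is closed in the subspace $\alpha(X)$, so its closure $\overline{\alpha(C)}$ in $\alpha X$ satisfies $\overline{\alpha(C)}\cap\alpha(X)=\alpha(C)$; as $\alpha(x)\in\alpha(X)\setminus\alpha(C)$, this forces $\alpha(x)\notin\overline{\alpha(C)}$. By normality of $\alpha X$, Urysohn's lemma yields $g\in C(\alpha X)$ with $g(\alpha(x))=1$ and $g\equiv 0$ on $\overline{\alpha(C)}$. Then $f:=g\circ\alpha\in F$ satisfies $f(x)=1$ and $f(C)=\{0\}$, whence $f(x)\notin\overline{f(C)}$. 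By \thref{existenceCompact}, $e_F X$ is then a genuine compactification of $X$.

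The core of the argument is to exhibit an explicit homeomorphism realising $e_F X\eqsim\alpha X$. For each $f\in F$ let $g_f\in C(\alpha X)$ be the unique continuous function with $g_f\circ\alpha=f$ (unique by density of $\alpha(X)$), and define
\[
\Phi\colon \alpha X\to \Pi_{f\in F}\big[\inf f,\sup f\big],\qquad \Phi(p)=\{g_f(p)\}_{f\in F}.
\]
This is continuous for the product topology, since each coordinate $g_f$ is, and by construction $\Phi\circ\alpha=e_F$. Density of $\alpha(X)$ together with continuity and compactness of $\alpha X$ forces $\Phi(\alpha X)=\overline{e_F(X)}=e_F X$, so $\Phi$ maps onto $e_F X$. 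Injectivity holds because $\{g_f\}_{f\in F}$ exhausts all of $C(\alpha X)$, which separates the points of the compact Hausdorff space $\alpha X$ (Urysohn again): $\Phi(p)=\Phi(q)$ forces $g(p)=g(q)$ for every $g\in C(\alpha X)$, hence $p=q$.

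Finally, $\Phi$ is a continuous bijection from the compact space $\alpha X$ onto the Hausdorff space $e_F X$, hence a homeomorphism, and it satisfies $\Phi\circ\alpha=e_F$; by \thref{relation_compactifications} this is precisely $\alpha X\eqsim e_F X$. I expect the main obstacle to be the bookkeeping in the separation step, namely correctly passing between the closure of $C$ in $X$ and the closure of $\alpha(C)$ in $\alpha X$: both the separation property and the injectivity of $\Phi$ rest on normality of $\alpha X$ and Urysohn's lemma, and it is precisely in identifying $\overline{\alpha(C)}\cap\alpha(X)=\alpha(C)$ that a careless argument would break.
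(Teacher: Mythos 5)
Your proof is correct; the paper itself states this theorem without proof, deferring to Chandler's book, and your argument (taking $F=\{g\circ\alpha : g\in C(\alpha X)\}$, using Urysohn's lemma in the normal space $\alpha X$ for the separation property, and then exhibiting the evaluation map $\Phi$ as a continuous bijection from a compact space onto the Hausdorff space $e_F X$) is exactly the standard one found there. All the delicate points — that $\overline{\alpha(C)}\cap\alpha(X)=\alpha(C)$ because $\alpha$ is an embedding, that $\Phi(\alpha X)=e_F X$ by density and compactness, and that injectivity follows because $C(\alpha X)$ separates points — are handled correctly.
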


\subsubsection{Representation of compactifications}

Consider a family $F\subset\BC(X)$ that separates points from closed sets. According to the Hausdorff compactification theory (see above subsections), its induced compactification can be written as a subset of the hypercube $\Pi_{f\in F} \big[ \inf f,\sup f \big]$, where the sides of this cube are as many as the functions $f\in F$. Because $F$ can be uncountable, its compactification could be hard to deal with from an analytical point of view. We seek a better representation of such space.

The idea behind the following result is that if we know the limits of functions $f,g\in BC(\Omega,\R)$, we also know the limits of $f^n+g^m$, $n,m\in \N$.
\begin{definition}
Let $\F$ be a family of functions $f:X\to \R$. We call $\A(\F)$ the algebra generated by $\F$, i.e.
\begin{align}
\A(\F) = \{ f^n+g^m :f,g\in \F,\ n,m\in \N\}.
\end{align}
\end{definition}

Follows.
\begin{theorem}[Representation theorem] \label{subAlgebraRepresentation}
Let $F\subset\BC(X)$ be a closed sub-algebra that separates points from closed sets and let $F' \subset F$ be such that $\overline{\A(F')} = F$. Then $e_{F'}X$ is a compactification of $X$ and $e_{F'} X \eqsim e_F X$.
\end{theorem}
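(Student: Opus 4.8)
The plan is to realise both $e_F X$ and $e_{F'}X$ as closed subsets of hypercubes linked by the obvious coordinate projection, and to show that this projection restricts to a homeomorphism between them. Since $F'\subset F$, there is a canonical continuous projection $\pi\colon \Pi_{f\in F}[\inf f,\sup f]\to \Pi_{f\in F'}[\inf f,\sup f]$ that forgets the coordinates indexed by $F\setminus F'$, and by construction $\pi\circ e_F=e_{F'}$. Because $\pi$ is continuous and $e_F X=\overline{e_F(X)}$ is compact, $\pi(e_F X)$ is compact, hence closed, and equals $\overline{\pi(e_F(X))}=\overline{e_{F'}(X)}=e_{F'}X$; thus $\pi$ restricts to a continuous surjection $\pi\colon e_F X\to e_{F'}X$. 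My goal is to produce a continuous inverse: this will simultaneously give $e_{F'}X\eqsim e_F X$ and show that $e_{F'}=\pi\circ e_F$ is an embedding with dense image, i.e. that $e_{F'}X$ is a compactification.

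To build the inverse I would first show that every $f\in F$ extends continuously to $e_{F'}X$. Each $h\in F'$ admits a continuous extension $\overline h\in C(e_{F'}X)$ by the extension lemma above, and since sums, products and powers of continuous functions are continuous, every element of $\A(F')$ — being of the form $h^n+k^m$ with $h,k\in F'$ — extends to $\overline h^{\,n}+\overline k^{\,m}\in C(e_{F'}X)$. For general $f\in F=\overline{\A(F')}$ I pick $g_k\in \A(F')$ with $g_k\to f$ uniformly on $X$. The crucial point is that $g\mapsto\overline g$ is an isometry for the uniform norm: since $e_{F'}(X)$ is dense in $e_{F'}X$ and the extensions are continuous,
\[
\sup_{e_{F'}X}\bigl|\overline{g_k}-\overline{g_l}\bigr| = \sup_{e_{F'}(X)}\bigl|\overline{g_k}-\overline{g_l}\bigr| = \sup_{X}|g_k-g_l|.
\]
Hence $(\overline{g_k})$ is Cauchy in the complete space $(C(e_{F'}X),\|\cdot\|_\infty)$ and converges to some $G_f\in C(e_{F'}X)$ with $G_f\circ e_{F'}=\lim_k g_k=f$; that is, $G_f$ is a continuous extension of $f$. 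Since $G_f(e_{F'}X)=\overline{f(X)}\subset[\inf f,\sup f]$, I may define the continuous map
\[
g\colon e_{F'}X\to \Pi_{f\in F}[\inf f,\sup f],\qquad y\mapsto (G_f(y))_{f\in F},
\]
which satisfies $g\circ e_{F'}=e_F$ and, by the same compactness-and-density argument used for $\pi$, has image $g(e_{F'}X)=\overline{e_F(X)}=e_F X$.

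It then remains to check that $\pi$ and $g$ are mutually inverse. On the dense subset $e_F(X)$ one has $g(\pi(e_F(x)))=g(e_{F'}(x))=e_F(x)$, so $g\circ\pi$ and $\mathrm{id}_{e_F X}$ are continuous maps into the Hausdorff space $e_F X$ agreeing on a dense set, whence $g\circ\pi=\mathrm{id}_{e_F X}$; symmetrically $\pi\circ g=\mathrm{id}_{e_{F'}X}$. Therefore $\pi$ is a homeomorphism with inverse $g$, both commuting with the embeddings, so $e_{F'}X\eqsim e_F X$ in the sense of Definition \ref{relation_compactifications}. Finally $e_{F'}=\pi\circ e_F$ is the composition of the embedding $e_F\colon X\to e_F X$ (an embedding because $F$ separates points from closed sets) with the homeomorphism $\pi$, hence an embedding onto a dense subset of the compact Hausdorff space $e_{F'}X$, which shows $e_{F'}X$ is a compactification of $X$.

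The main obstacle is the extension step for $f\in F=\overline{\A(F')}$: extendability is immediate for the generators and their algebraic combinations, but one must ensure it survives the uniform closure. This is exactly where the density of $e_{F'}(X)$ enters, guaranteeing that passing to extensions is an isometry and that uniform Cauchy sequences of extensions converge in $C(e_{F'}X)$; without this isometry there would be no control of the candidate extension on the added boundary points $e_{F'}X\setminus e_{F'}(X)$. All remaining steps are soft topology (continuity of projections, continuous bijections from compact to Hausdorff spaces are homeomorphisms, and agreement of continuous maps on dense sets).
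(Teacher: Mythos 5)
Your proof is correct and shares the paper's skeleton --- the coordinate projection $\pi\colon e_F X\to e_{F'}X$ is the easy direction, and the substance lies in showing that the $F'$-coordinates of a boundary point determine its $F$-coordinates --- but the mechanism you use for the hard direction is genuinely different. The paper works pointwise with nets: for $y=\lim_\lambda\Pi_{f\in F'}f(x_\lambda)$ it shows directly, via a uniformly convergent sequence in $\A(F')$ and an $\eps/3$ estimate, that $\lim_\lambda g(x_\lambda)$ exists and is independent of the representing net for every $g\in F$. You instead package the same fact functional-analytically: the extension operator $g\mapsto\overline g$ is an isometry from $(\A(F'),\|\cdot\|_\infty)$ into the complete space $C(e_{F'}X)$, so it passes to the uniform closure $F$, and the lifting $g\colon e_{F'}X\to e_FX$ is then assembled coordinatewise. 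This buys a cleaner argument: no nets, no subsequence extraction (the paper's extraction of a convergent subsequence of $(\lim_\lambda f_n(x_\lambda))_n$ is in fact unnecessary, since that sequence is already Cauchy by the very isometry you exploit), and a transparent verification that $\pi$ and $g$ are mutually inverse continuous maps via agreement on dense subsets of Hausdorff spaces. One small citation quibble: you invoke the extension lemma for the family $F'$, whose stated hypothesis (separation of points from closed sets) is not yet known for $F'$ at that stage --- indeed, that $e_{F'}$ is an embedding is part of what is being proved; but the extension of a generator $h\in F'$ is simply the restriction to $e_{F'}X$ of the coordinate projection $\pi_h$, which requires no such hypothesis (and elements of $\A(F')$ and of its uniform closure are constant on the fibres of $e_{F'}$, so their values on $e_{F'}(X)$ are well defined even before injectivity is established), so nothing is lost.
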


\begin{proof}
Let $e_{F'}X$ be the (formal) compactification of $F'$. Clearly $e_F X \geq e_{F'} X$. To prove the opposite inclusion, we must find a continuous function $T\colon e_{F'} X \to e_F X$ such that $T\circ e_{F'} = e_F$. Fix $y = \lim_\lambda \Pi_{f\in F'} f(x_\lambda) \in e_F X$. If $g \in \A(F')$, $\lim_\lambda g(x_\lambda)$ exists and coincides on all nets $x_\gamma$ such that $y = \lim_\gamma \Pi_{f\in F'} f(x_\gamma)$. Next, let $g\in \overline{\A(F')}$ and find a sequence $\{f_n\}_{n\in \N}\subset \A(F')$ such that $f_n\to g$ uniformly. Because $\|f_n\|_\infty$ is bounded, so is $\{ \lim_\lambda f_n (x_\lambda)\}_{n\in \N}$, and so we can extract a subsequence $\{f_{n_k}\}_{k\in \N}$ such that $\lim_k \lim_\lambda f_{n_k} (x_\lambda) = L\in \R$. Fix $\eps>0$ and $N\in \N$ such that $\|f_{n_N}-g\|_\infty < \frac{\eps}{3}$ and find $\tilde{\lambda}\in \Lambda$ such that $|f_{n_N}(x_\lambda)-\lim_\lambda f_{n_N}(x_\lambda)| < \frac{\eps}{3}$ for all $\lambda \geq \tilde{\lambda}$. Finally
\begin{align}
|g(x_\lambda)-L|\leq |g(x_\lambda)-f_{n_N}(x_\lambda)| + |f_{n_N}(x_\lambda)-\lim_\lambda f_{n_N}(x_\lambda)| + |\lim_\lambda f_{n_N}(x_\lambda) - L| < \eps
\end{align}
for all $\lambda \geq \tilde{\lambda}$, i.e. the net $g(x_\lambda)$ converges to $L\in \R$. In particular, by the uniqueness of $\lim_\lambda g(x_\lambda)$, we conclude that the original sequence $\{\lim_\lambda f_n(x_\lambda)\}_{n\in \N}$ converges to $L$, which also proves that the limit does not depend on the particular net $ x_\gamma$ as far as $\lim_\lambda f(x_\lambda) = \lim_\gamma f(x_\gamma)$ for all $f\in F'$. This shows that the map
\begin{align}
T\colon e_{F'} X\to e_F X, y = \lim_\lambda \Pi_{f\in F'} f(x_\lambda) \mapsto Ty = \lim_\lambda \Pi_{f\in F} f(x_\lambda).
\end{align}
is a well-defined isomorphism. Because its inverse is the projection map $\pi_{F'}|_{T(e_{F'}X)}$, which is continuous and open, then
\begin{align}
T\circ e_{F'}\colon x\mapsto \Pi_{f\in F'} f(x) \mapsto \Pi_{f\in F} f(x) = e_F(x)
\end{align}
is a homeomorphism. To prove that $e_{F'}X$ is a compactification of $X$, we notice that $F$ separates points, as so does $F'$. If $U\subset X$ is open, so is
\begin{align}
e_{F'}(U) = T^{-1}(e_F (U)),
\end{align}
and $e_{F'}$ is an injective continuous open map, thus it is an embedding onto its image.
\end{proof}

\subsection{Restriction on non-linearity}

For the sake of this work, it is important that the set of functions we work with is separable (has a countable dense set). Let $F \subset\BC(\Omega\times \B^d)$ be a closed separable algebra that separates points from closed sets and let $e_F\Omega\times \B^d$ be its compactification. Because $e_F\Omega\times \B^d$ is a compact Hausdorff space we have the following isometric isomorphism of its dual
\begin{align}
C(e_F \Omega\times \B^d)^* \eqsim \M(e_F \Omega\times \B^d).
\end{align}

\begin{lemma} \label{compatificationSeparable}
If $F\subset\BC(X)$ is separable, so is $C(e_F X)$.
\end{lemma}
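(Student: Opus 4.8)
The plan is to exhibit an explicit countable dense subset of $C(e_F X)$ by combining the Stone--Weierstrass theorem with the fact that the extension map $f\mapsto\overline{f}$ is an isometry. First I would record that $e_F X$ is compact Hausdorff: by construction it is the closure of $e_F(X)$ inside the product $\Pi_{f\in F}[\inf f,\sup f]$, which is compact by Tychonoff's theorem and Hausdorff as a product of intervals, so the closed subset $e_F X$ is compact Hausdorff. Under this identification the extension $\overline{f}$ of each $f\in F$ is precisely the restriction to $e_F X$ of the coordinate projection $\pi_f$, since for $y=\lim_\lambda \Pi_{g\in F} g(x_\lambda)$ one has $\overline{f}(y)=\lim_\lambda f(x_\lambda)=\pi_f(y)$.

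Next I would establish two features of the collection $\{\overline{f}:f\in F\}\subset C(e_F X)$. It separates points: if $y\neq y'$ in $e_F X$, then they differ in some coordinate $f_0$, whence $\overline{f_0}(y)=y_{f_0}\neq y'_{f_0}=\overline{f_0}(y')$. And $f\mapsto\overline{f}$ is a linear isometry for the sup norms, because $e_F(X)$ is dense in $e_F X$ and $\overline{f}$ is continuous, so $\sup_{e_F X}|\overline{f}|=\sup_{e_F(X)}|\overline{f}|=\sup_X|f|$. Consequently the unital subalgebra $\mathcal{A}\subset C(e_F X)$ generated by $\{\overline{f}:f\in F\}$ together with the constants separates points and contains the constants, so by the Stone--Weierstrass theorem $\mathcal{A}$ is dense in $C(e_F X)$.

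It then remains to replace $\mathcal{A}$ by a countable dense subset. Let $D\subset F$ be a countable dense set, which exists since $F$ is separable, and let $\mathcal{A}_0$ be the set of all polynomials with rational coefficients in finitely many of the functions $\overline{f}$, $f\in D$. This $\mathcal{A}_0$ is countable. To see it is dense in $\mathcal{A}$, hence in $C(e_F X)$, take any element of $\mathcal{A}$, i.e. a real polynomial in finitely many $\overline{f}$ with $f\in F$; approximating each such $f$ by a sequence in $D$ and using the isometry gives sup-norm approximation of each generator by functions $\overline{g}$ with $g\in D$, and then approximating the real coefficients by rationals yields an element of $\mathcal{A}_0$. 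Since the algebra operations are continuous and the functions occurring are uniformly bounded, these approximations combine to give $\mathcal{A}\subset\overline{\mathcal{A}_0}$, so $\overline{\mathcal{A}_0}=C(e_F X)$ and $C(e_F X)$ is separable.

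The step I expect to be the main obstacle is this last one, namely verifying that the countable rational-coefficient algebra $\mathcal{A}_0$ is sup-norm dense in the full real algebra $\mathcal{A}$. Although conceptually clear, it requires some bookkeeping: one must control products of approximants, using uniform boundedness so that $\|ab-a'b'\|\le\|a\|\,\|b-b'\|+\|b'\|\,\|a-a'\|$ remains small, and simultaneously discretise the coefficients. I would also emphasise that separability of $F$ is genuinely necessary: for $X=\N$ and $F=\BC(\N)$ one obtains $e_F X=\beta\N$ with $C(\beta\N)$ nonseparable, so no argument omitting the hypothesis can work.
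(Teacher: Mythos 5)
Your proposal is correct and follows essentially the same route as the paper: extend a countable dense subset of $F$ to $C(e_F X)$, observe that the extensions separate points and that extension is a sup-norm isometry by density of $e_F(X)$, and invoke Stone--Weierstrass; the paper's proof is a one-line version of exactly this, applying Stone--Weierstrass directly to the algebra generated by $\{1\}\cup\{\overline{f}_n\}$. Your additional discretisation step (rational-coefficient polynomials) is the standard bookkeeping the paper leaves implicit, and it goes through as you describe.
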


\begin{proof}
Let $\{f_n\}_{n\in\N}$ be dense in $F$. By the Stone-Weierstrass theorem, the algebra generated by $\{1\}\cup \{\overline{f}_n\}_{n\in\N} \subset C(e_F X)$ is dense in $C(e_F X)$, and so $C(e_F X)$ is separable.
\end{proof}

Because $C(e_F\Omega\times \B^d)$ is separable we also have the abstract sequential compactness principle \refp{Appendix:Banach-Alaoglu} on its dual $\M(e_F\Omega\times \B^d)$. Let $T^{-1} F \subset \G_p$ the corresponding algebra (w.r.t. $\times^p$) on the set of continuous functions with p-growth. There is an isometric isomorphism
\begin{align}
T^{-1}F \overset{\tilde{T}}{\eqsim} C(e_F\Omega\times \B^d).
\end{align}
By the Riesz representation theorem, we can write its adjoint as
\begin{align}
\tilde{T}^*\colon \M(e_F\Omega\times \B^d) \to (T^{-1}F)^*, \nu \mapsto \left( \Phi \mapsto (\tilde{T}^*\nu, \Phi) = (\nu, \tilde{T}\Phi) = \int_{e_F\Omega\times \B^d} \tilde{T} \Phi d\nu. \right)
\end{align}

\begin{lemma} \label{productCompact}
Let $X,Z$ be completely regular Hausdorff spaces and let $F\subset\BC(X)$ and $G\subset\BC(Z)$ be closed sub-algebras that separate points from closed sets and contain the constant function. The following spaces are all isometrically isomorphic to each other
\begin{align}
C( e_{F\cup G} X\times Z ) \eqsim C(e_F X\times e_G Z) \eqsim \overline{\A(C(e_F X)\times C(e_G Z))} \eqsim \overline{\A( F\cup G)},
\end{align}
where each $f\in F$ and $g\in G$ is extended to a function on the product space by keeping constant the other variable. Moreover, $F\cup G\subset\BC(X\times Z)$ separates points from closed sets.
\end{lemma}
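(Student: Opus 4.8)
The plan is to prove the chain of isometric isomorphisms by the standard dictionary between compactifications of $X\times Z$ and closed subalgebras of $\BC(X\times Z)$: for each of the four objects I would exhibit the subalgebra of $\BC(X\times Z)$ obtained by restricting its continuous functions back to the dense copy of $X\times Z$, and then match these subalgebras. Throughout I identify $f\in F$ with its extension $(x,z)\mapsto f(x)$ and $g\in G$ with $(x,z)\mapsto g(z)$, so that $F\cup G$, $\A(F\cup G)$ and the product algebra all sit inside $\BC(X\times Z)$, and I use that restriction to a dense set is an isometry of sup-norms.

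First I would record that $e_F X\times e_G Z$ is itself a compactification of $X\times Z$: it is compact Hausdorff by Tychonoff \refp{Appendix:Tychonoff} together with the fact that a product of Hausdorff spaces is Hausdorff, while the product of the two dense embeddings $e_F,e_G$ embeds $X\times Z$ densely into it. With this in hand the core of the argument is two applications of Stone--Weierstrass plus one purely algebraic identification. For the isomorphism $C(e_F X\times e_G Z)\eqsim \overline{\A(C(e_F X)\times C(e_G Z))}$, the functions $(\xi,\zeta)\mapsto \phi(\xi)\psi(\zeta)$ with $\phi\in C(e_F X),\psi\in C(e_G Z)$ separate the points of the compact space $e_F X\times e_G Z$ and contain the constants, so by Stone--Weierstrass the closed algebra they generate is all of $C(e_F X\times e_G Z)$; restricting isometrically to $X\times Z$ gives the claim. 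For $\overline{\A(C(e_F X)\times C(e_G Z))}\eqsim\overline{\A(F\cup G)}$ I would use, exactly as in \refp{compatificationSeparable}, that $C(e_F X)$ restricted to $X$ is the closed algebra $\overline{\A(F)}$ generated by $F$ and the constants, and likewise $C(e_G Z)|_Z=\overline{\A(G)}$; since $F\cup G\subseteq C(e_F X)\cup C(e_G Z)\subseteq\overline{\A(F\cup G)}$ inside $\BC(X\times Z)$, the closed algebras generated by these families coincide. Finally $C(e_{F\cup G}(X\times Z))\eqsim\overline{\A(F\cup G)}$ is the same restriction principle applied to the compactification $e_{F\cup G}(X\times Z)$.

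The delicate point, and the step I expect to be the main obstacle, is the very existence of $e_{F\cup G}(X\times Z)$ as a genuine compactification, i.e. that the evaluation map $e_{F\cup G}$ is an embedding. Here one must be careful: a single one-variable function cannot separate a point from a closed set whose two projections are both surjective (think of the diagonal in $X\times X$), so $F\cup G$ does \emph{not} separate points from closed sets factor-by-factor, and the separation must be read at the level of the generated algebra $\A(F\cup G)$. Concretely, given a closed $C$ and $(x_0,z_0)\notin C$, I would first choose an open box $U\times W\ni(x_0,z_0)$ disjoint from $C$, then invoke the separation property of $F$ on $x_0,X\setminus U$ and of $G$ on $z_0,Z\setminus W$ to obtain $f\in F$ and $g\in G$, and finally compose them with polynomials (staying in the algebra, since $F,G$ are algebras containing constants) so that after normalisation $f(x_0)=g(z_0)=1$, $\|f\|_\infty,\|g\|_\infty\le 1$, while $|f|\le\tfrac14$ on $X\setminus U$ and $|g|\le\tfrac14$ on $Z\setminus W$. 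The product $h=f\cdot g\in\A(F\cup G)$ then satisfies $h(x_0,z_0)=1$ and $|h|\le\tfrac14$ on $C$, since every point of $C$ lies outside $U$ in the first coordinate or outside $W$ in the second. This shows $\A(F\cup G)$ separates points from closed sets, so by the Representation theorem \refp{subAlgebraRepresentation} the map $e_{F\cup G}$ is an embedding and $e_{F\cup G}(X\times Z)$ is a compactification.

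With existence secured, the four restricted algebras all equal $\overline{\A(F\cup G)}\subseteq\BC(X\times Z)$, which both proves the isometric isomorphisms and, by the correspondence of \refp{relation_compactifications}, yields a posteriori the equivalence of compactifications $e_{F\cup G}(X\times Z)\eqsim e_F X\times e_G Z$: two compactifications of $X\times Z$ whose continuous functions restrict to the same closed subalgebra of $\BC(X\times Z)$ are equivalent.
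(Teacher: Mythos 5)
Your proof is correct and follows the route the paper itself indicates --- the paper's entire proof is the single sentence that the lemma is ``a straightforward application of the Stone--Weierstrass theorem'', so your writeup supplies all of the actual content: the verification that $e_F X\times e_G Z$ is a compactification of $X\times Z$, the Stone--Weierstrass step identifying $C(e_F X\times e_G Z)$ with the closed algebra generated by the tensor products $\phi\otimes\psi$, and the restriction-to-a-dense-subset isometry that matches everything with $\overline{\A(F\cup G)}\subset \BC(X\times Z)$. The one place where you go beyond (and in effect correct) the statement is the separation argument: as you observe, $F\cup G$ itself does \emph{not} separate points from closed sets in $X\times Z$ (a one-variable function cannot separate $(x_0,z_0)$ from a closed set both of whose projections are surjective), so the lemma's final clause is only true at the level of the generated algebra; your open-box argument showing that $\A(F\cup G)$ separates points from closed sets, combined with \refp{subAlgebraRepresentation} to conclude that $e_{F\cup G}(X\times Z)$ is a genuine compactification equivalent to $e_F X\times e_G Z$, is precisely the step that Stone--Weierstrass alone does not cover, and the paper is silent on it.
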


The proof of the above lemma is a straightforward application of the Stone-Weierstrass theorem. We underline that it's important to take families $F,G$ defined exclusively on each respective space, and the theorem is false if we instead add $f=f(x,y)$ that is not of the form above.

Morally speaking, what the previous lemma says is that on product spaces it is enough to work with the compactifications in each coordinate separately. Moreover, their dual elements (measures) can be tested against tensor products of functions that depend on each variable independently.

\subsection{Representation of Young measures}

Let $\Omega\subset \R^n$ be open and bounded and $G'\subset\BC(\B^d)$ and $F'\subset\BC(\Omega)$ be closed separable sub-algebras that separate points from closed sets. Let $T^{-1} F = A \subset \G_p$ the corresponding algebra, with respect to the $\times^p$ product, in the set of functions having p-growth. $F$ is isometrically isomorphic to $C(e_F \Omega\times \B^d)$. 

With abuse of notation, we are going to call $T$ the isomorphism between $A$ and $C(e_F \Omega\times \B^d)$. To each function $u\in L^p(\Omega,\R^d)$ we associate an \emph{elementary Young measure} $\xi_u\in A^*$ by setting
\begin{align}
\xi_u\colon A\to \R, \Phi \mapsto \int_\Omega \Phi(x,u(x))d\mu(x).
\end{align}
Next, consider a bounded sequence $\{u_n\}_{n\in \N}\subset L^p(\Omega,\R^d), \sup_n \|u_n\|_p \leq C$. As $\Phi \in \G_p$, the sequence of elementary Young measures is also bounded
\begin{align}
\|\xi_{u_n}\| = \sup_{\|\Phi\| \leq 1} \left| \int_\Omega \Phi(x,u_n(x))d\mu(x) \right| = \int_\Omega (1+|u_n|)^p d\mu \leq \mu(\Omega) + C^p.
\end{align}
Because of the isomorphism $A^* \eqsim C(e_F\Omega\times \B^d)^* \eqsim \M(e_F\Omega\times \B^d)$, there exists a subsequence, relabelled in the same way, and $\nu \in A^*$ such that $\xi_{u_n} \rightharpoonup^* \nu$ in $A^*$. Set
\begin{align}
L:= (T^*)^{-1} \nu \in \M(e_F\Omega\times \B^d).
\end{align}

We now study the measure $L$ to find a better representation for $\nu\in A^*$. Let $\psi \in C(e_F\Omega\times \B^d)$, we immediately notice that $L\in \M^+(e_F\Omega\times \B^d)$ as a consequence of the following equality
\begin{align}
\ll \nu, T^{-1} \psi \gg = \lim_n \int_\Omega (T^{-1}\psi)(x,u_n(x))d\mu(x).
\end{align}
Because the constant functions belong to $G'$, we can plug $T^{-1} \psi = \phi(x) (1+|z|)^p, \phi\in C(e_{F'} \Omega)$ into the previous equation and obtain the identity
\begin{align}
\int_{e_F \Omega\times \B^d} \phi(x) dL(x,\overline{z}) = \lim_n \int_\Omega \phi(x) (1+|u_n(x)|)^p d\mu(x) = \int_{e_{F'}} \phi(x)d\lambda(x).
\end{align}
By \refp{productCompact}, the projection
\begin{align}
\pi\colon e_F \Omega\times \B^d \to e_{F'} \Omega
\end{align}
is well-defined, and we can write $\tilde{\lambda} = \pi_\# L$. Note that hereby $\tilde{\lambda} \in \M^+(e_{F'}(\Omega))$.
%
%
Find the unique $\tilde{\lambda}$-measurable parametrized family $\{ \tilde{\nu}_x \}_{x\in e_{F'}\Omega}$ such that $\nu_x\in \M^+_1(e_{G'} \B^d)$ $\tilde{\lambda}$-almost every $x$ and
\begin{align}
\langle L,\Phi \rangle = \int_{e_{F'}\Omega } \langle \tilde{\nu}_x,\Phi(x,\cdot) \rangle d\tilde{\lambda}(x) \quad \forall\ \Phi\in C(e_F \Omega\times \B^d).
\end{align}
For any $\phi\in C_0(\Omega)$ take $\Phi = \phi (1-|\cdot|)^p$. We compute
\begin{align}
\int_{e_{F'}\Omega } \phi(x) \langle \tilde{\nu}_x, (1-|\cdot|)^p \rangle d\tilde{\lambda}(x) = & \int_{e_F \Omega\times \B^d} \phi(x) (1-|z|)^p dL(x,z) \\
= & \lim_n \int_\Omega (\phi \1_{\R^d})(x,u_n(x))d\mu(x) = \int_\Omega \phi(x) d\mu(x).
\end{align}
Because $\mu \in C_0(\Omega)^*$ we immediately conclude that
\begin{align}
\mu = \langle \tilde{\nu}_x, (1-|\cdot|)^p \rangle \tilde{\lambda} \mres \Omega,
\end{align}
where $\mu$ is extended on $e_{F'}\Omega$ by $\mu(E)\equiv \mu( e_{F'}^{-1}(E) ), E\subset e_{F'}\Omega$ Borel. Apply the Radon-Nikodym theorem and write
\begin{align}
\tilde{\lambda} = \frac{\tilde{\lambda}}{\mu} d\mu + \tilde{\lambda}^s.
\end{align}
From the previous identification, we get
\begin{align}
\begin{cases}
\langle \tilde{\nu}_x, (1-|\cdot|)^p \rangle \frac{\tilde{\lambda}}{\mu} = 1 \quad & \mu-\text{a.e.} \\
\langle \tilde{\nu}_x, (1-|\cdot|)^p \rangle = 0 & \tilde{\lambda}^s-\text{a.e.,}
\end{cases}
\end{align}
where the second condition implies that $\tilde{\nu}_x(e_{G'}(\B^d)) = 0$ $\tilde{\lambda}^s$-a.e., i.e. the measures are concentrated on the boundary $\partial e_{G'}(\B^d)$. On $e_{G'}(\B^d)$ we have $0 < (1-|z|)^p \leq 1$, whereas $|z|=1$ on $\partial e_{G'}(\B^d)$. In particular
\begin{align}
\begin{cases}
\frac{\tilde{\lambda}}{\mu} = \frac{1}{\langle \tilde{\nu}_x, (1-|\cdot|)^p \rangle} \geq 1  & \mu-\text{a.e.} \\
\tilde{\nu}_x(\partial e_{G'}(\B^d)) = 1 & \tilde{\lambda}^s-\text{a.e.}
\end{cases}
\end{align}
Now let $\phi \in \BC(\R^d)$ and define
\begin{align}
\langle \nu_x,\phi \rangle = & \frac{\tilde{\lambda}}{\mu}(x) \int_{e_{G'}(\B^d)} (1-|z|)^p \phi \left(\frac{z}{1-|z|} \right)  d\tilde{\nu}_x(z) \\
= & \frac{\tilde{\lambda}}{\mu}(x) \int_{\B^d} (1-|z|)^p \phi \left( \frac{z}{1-|z|} \right) d(e_{G'})_\# \tilde{\nu}_x(z)
\end{align}
In particular $\nu_x \in \M^+_1(\R^d)$ and $\{\nu_x\}_{x\in\Omega}$ is $\mu$-measurable. Let $\lambda = \tilde{\nu}_x(\partial e_{G'}(\B^d)) \tilde{\lambda}$. Then $\lambda\in \M^+(e_{F'}\Omega)$ and it decomposes into
\begin{align}
\lambda = & \tilde{\nu}_x(\partial e_{G'}(\B^d)) \frac{\tilde{\lambda}}{\mu} \mu + \tilde{\nu}_x(\partial e_{G'}(\B^d)) \tilde{\lambda}^s \\
= & \tilde{\nu}_x(\partial e_{G'}(\B^d)) \frac{\tilde{\lambda}}{\mu} \mu + \tilde{\lambda}^s.
\end{align}
For $\lambda$-almost every $x\in e_{F'} \Omega$ and for $\psi \in C(\partial e_{G'}\B^d)$ set
\begin{align}
\langle \nu_x^\infty, \psi \rangle = \frac{1}{\tilde{\nu}_x(\partial e_{G'}(\B^d))} \int_{\partial e_{G'}(\B^d)} \psi(z) d\tilde{\nu}_x(z),
\end{align}
hereby
\begin{align}
\nu_x^\infty \in \M^+_1(\partial e_{G'}(\B^d)).
\end{align}
For each $\Phi\in A$, its recession function is defined to be the restriction
\begin{align} \phi^\infty = \phi|_{e_{F'}\Omega \times \partial e_{G'}(\B^d)}.
\end{align}
Finally, we obtain the formula
\begin{align}
\langle L,T\Phi \rangle = & \int_{e_{F'}(\Omega)} \langle \tilde{\nu}_x, T\Phi(x,\cdot) \rangle d \tilde{\lambda} \\
= & \int_{e_{F'} \Omega} \int_{e_{G'}(\B^d)} T\Phi d\tilde{\nu}_x \left( \frac{\tilde{\lambda}}{\mu} d\mu + \overbrace{d\tilde{\lambda}^s}^{=0} \right) + \int_{e_{F'} \Omega}\fint_{\partial e_{G'}(\B^d)} T\Phi d\tilde{\nu}_x \ (\tilde{\nu}_x(\partial e_{G'}(\B^d)) d\tilde{\lambda}) \\
= & \int_\Omega \langle \nu_x,\Phi(x,\cdot) \rangle d\mu + \int_{e_{F'}\Omega } \langle \nu_x^\infty,\Phi^\infty(x,\cdot)\rangle d\lambda
\end{align}
and the representation of the Young Measure as the triple
\begin{align}
\nu = \big( \{\nu_x\}_{x\in\Omega}, \lambda, \{\nu_x^\infty\}_{x\in e_{F'} \Omega} \big).
\end{align}
where
\begin{align}
& \nu_x \in \M^+_1(\R^d) \text{ for } \mu \text{-almost every } x\in \Omega, \\
& \lambda \in \M^+(e_{F'}\Omega), \text{ and } \\
& \nu_x^\infty \in \M^+_1(\partial e_{G'}(\B^d)) \text{ for } \lambda \text{-almost every } x\in \overline{\Omega}.
\end{align}
We say that $u_n$ converges in the sense of Young measures to $\nu$, and write
\begin{align}
u_n\xrightarrow{Y^p(\mu,e_A )} \nu \quad \text{ or just } \quad u_n\xrightarrow{Y^p(\mu,A)} \nu,
\end{align}
where $\mu$ is the measure that "regulates and weights" oscillation and concentration of $u_n$, and $e_A$ is the compactification at infinity, generated by the family $A$.

From this point onwards the family $F'$ in $\Omega$ will always be the set of functions $C(\overline{\Omega})$.

\subsection{Properties of generalised Young Measures and connection to Young Measures on the sphere} \label{oscillatingFunction}

Here we show how the above construction generalises the more classical setting of Young Measures on the sphere, see \cite{reshetnyak1968weak} for the original idea behind their representation, and \cite{alibert1997non} for their modern implementation in the calculus of variations. We then study how the new representation for generalised Young Measures behaves geometrically, and its properties.

As a reminder, we state here the definition of integrands with a regular recession at infinity.
\begin{definition}
The set of integrands admitting a regular recession at infinity is
\begin{align}
\E_p(\Omega,\R^d) = \left\{\Phi\in C(\overline{\Omega}\times \R^d): \lim_{t\to\infty} \frac{\Phi(x,tz)}{t^p}\in \R \text{ locally uniformly in } (x,z) \in \overline{\Omega}\times \R^d \right\}.
\end{align}
\end{definition}

Because we intend to generalise the theory of Young Measures on functions with a regular recession, we need to extend the above class and at the same time preserve good topological properties of such a larger class. To do so, consider countably many functions $g_i \in \BC(\B^d)$ and their representations as integrands of $p$-growth $g_i(\frac{z}{1+|z|}) (1+|z|)^p$. We are interested in understanding how to represent, in a simple way, Young Measures relative to the compactification generated by $\E_p \cup \{g_i(\frac{z}{1+|z|}) (1+|z|)^p \}$. In the language of Hausdorff compactifications, set $G'=C(\overline{\B^d})\cup \{g_i,i\in\N \}$ and $F' = C(\overline{\Omega})$. Because $C(\overline{\B^d})\subset G'$, the closure of the algebra generated by either family is separable, separates points from closed sets and contains the constants. Call $F=G'\cup F'$, and without loss of generality, we can assume that $\|g_i\|\leq 1$ for all $i$.
\begin{lemma} \label{oscillatingMeasure}
$C( e_F \Omega\times \B^d)$ is isometrically isomorphic to $C(\overline{\Omega}\times \overline{graph(g_i)})$, where $(g_i)\colon\B^d\to [-1,1]^\N, z\mapsto (g_i(z))_{i\in\N}$ and the topology on the target space is the product topology. It is metrised by
\begin{align}
d(z,w) = |z-w| + \sum_i 2^{-i} |g_i(z)-g_i(w)|.
\end{align}
\end{lemma}
\begin{proof}
By \refp{productCompact} it is enough to prove that $C(e_{g_i,i\in\N}\B^d)\eqsim C(\overline{graph(g_i)})$. \refp{subAlgebraRepresentation} provides the isomorphism
\begin{align} \overline{\A(C(\overline{\B^d})\cup \{g_i,i\in\N\})} = \overline{\A(1,z^1,\ldots,z^d,g_i,i\in\N)}, \end{align}
and we conclude by noticing that $\overline{(1,z^1,\ldots,z^d,g_i,i\in\N)(\B^d)}$ is homeomorphic to $\overline{graph(g_i)}$. The topological equivalence between such metrics and the product topology is standard.
\end{proof}
When $g_i\in\C(\overline{\B^d})$, then $C(e_F \Omega\times \B^d) \eqsim C(\overline{\Omega}\times \overline{\B^d})$, and therefore we recover the usual sphere representation for the compactification induced by $\E_p$. This means the obvious, that we can add functions that have a regular recession and we still obtain the same space (up to homeomorphisms).

By definition, the compactification of $\B^d$ can be represented by the space $\Gamma$ of sequences $\{z_n\}_{n\in \N}\subset \B^d$ such that $z_n\to z\in \overline{\B^d}$ and $g_i(z_n)$ converges for all $i\in\N$, and two such sequences $\{z_n\}_{n\in \N}$ and $\{w_n\}_{n\in \N}$ are identified provided
\begin{align}
\lim_n |z_n-w_n| + \sum_i 2^{-i} |g_i(z_n)-g_i(w_n)|=0.
\end{align}

\begin{definition}
We call $e_{g_i,i\in\N}$ the compactification, and $\partial e_{g_i,i\in\N} = e_{g_i,i\in\N} \setminus graph(g_i,i\in \N)$, we can write the triple Young measure as
\begin{align}
\nu = \big( \{\nu_x\}_{x\in\Omega}, \lambda, \{\nu_x^\infty\}_{x\in \overline{\Omega}} \big),
\end{align}
where $\nu_x \in \M^+_1(\R^d)$ for $\mu$-almost every $x\in \Omega$, $\lambda \in \M^+(\overline{\Omega})$, and $\nu_x^\infty \in \M^+_1(\partial e_{g_i,i\in\N})$ for $\lambda$-almost every $x\in \overline{\Omega}$.
\end{definition}
Notice that $\partial e_{g_i,i\in\N}$ is an abuse of notation and refers to the boundary of the embedded space within the compactification.

So far we have constructed compactifications by "glueing" $g_i$s on top of the functions $z^1,\ldots,z^d$; that is to say on top of the unit ball. It is sometimes useful to iterate this argument, to stack another countable family $\{ f_i,i\in \N \}$ on top of the compactification $e_{g_i,i\in\N}$. This process gives the same compactification as if we were considering the two families at once, as the following lemma shows.
\begin{lemma}
Let $e_{g_i,i\in\N}$ be a compactification of $\B^d$ and $f_i\in BC(\B^d)$. Then
\begin{align}
e_{g_i,f_i,i\in\N} \eqsim \overline{ graph_{graph(g_i)} f_i }.
\end{align}
\end{lemma}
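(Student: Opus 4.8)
The plan is to reduce this iterated construction to a single application of \refp{oscillatingMeasure}. Since $\{g_i\}_{i\in\N}$ and $\{f_i\}_{i\in\N}$ are both countable subfamilies of $\BC(\B^d)$, their union is again countable; I would relabel it as one sequence $\{h_j\}_{j\in\N}\subset\BC(\B^d)$, say by interleaving $h_{2i}=g_i$ and $h_{2i+1}=f_i$. By definition $e_{g_i,f_i,i\in\N}$ is the compactification of $\B^d$ generated by $\overline{\A(C(\overline{\B^d})\cup\{g_i,f_i:i\in\N\})}$, which is precisely the compactification $e_{h_j,j\in\N}\B^d$ generated by $C(\overline{\B^d})\cup\{h_j:j\in\N\}$.

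First I would apply the core content of \refp{oscillatingMeasure} to the single family $\{h_j\}_{j\in\N}$. That lemma, via \refp{subAlgebraRepresentation}, yields the homeomorphism $e_{h_j,j\in\N}\B^d\eqsim\overline{graph(h_j)}$, where $graph(h_j)$ is the image of the joint evaluation map $z\mapsto(z,(h_j(z))_j)$ in $\B^d\times[-1,1]^\N$ and the bar is closure in the product topology. Nothing in that argument used the name of the generating functions, so the conclusion holds verbatim and gives
\[
e_{g_i,f_i,i\in\N}\;\eqsim\;\overline{graph(h_j)}.
\]

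It then remains to identify $\overline{graph(h_j)}$ with $\overline{graph_{graph(g_i)}f_i}$. I would observe that the map $z\mapsto(z,(g_i(z))_i,(f_i(z))_i)$ differs from $z\mapsto(z,(h_j(z))_j)$ only by the coordinate reindexing of $[-1,1]^\N$ separating even and odd entries; this reindexing is a homeomorphism of $[-1,1]^\N$, so it carries $\overline{graph(h_j)}$ homeomorphically onto the closure of $\{(z,(g_i(z))_i,(f_i(z))_i):z\in\B^d\}$ in $\B^d\times[-1,1]^\N\times[-1,1]^\N$. Writing a point of $\B^d\times[-1,1]^\N$ as $\gamma=(z,(g_i(z))_i)\in graph(g_i)$ exhibits this last set as $graph_{graph(g_i)}f_i$, whence its closure is $\overline{graph_{graph(g_i)}f_i}$ and the chain of homeomorphisms closes.

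The one genuine point to verify is this last identification: that closing the joint graph all at once in the full product gives the same space as first passing to the base compactification $\overline{graph(g_i)}$ and only then closing the graph of the $f_i$ over the dense subset $graph(g_i)$. I expect this to follow because a net $z_\alpha$ makes $(z_\alpha,(g_i(z_\alpha))_i,(f_i(z_\alpha))_i)$ converge in the product iff each block of coordinates converges, so a limit point is exactly a pair $(\gamma,\phi)$ with $\gamma\in\overline{graph(g_i)}$ and $\phi$ an accumulation value of $(f_i(z_\alpha))_i$ along a net realising $\gamma$; no point outside $graph(g_i)$ acquires spurious $f$-fibres, since in both descriptions the closure is computed from the single dense set $graph(g_i)$.
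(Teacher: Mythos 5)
Your proposal is correct and follows essentially the same route as the paper: both arguments reduce to the observation that the joint graph $\{(z,(g_i(z))_i,(f_i(z))_i):z\in\B^d\}$ coincides, up to a coordinate reindexing, with $graph_{graph(g_i)}(f_i)$ (with each $f_i$ extended constantly in the $w$-coordinates), after which \refp{subAlgebraRepresentation} identifies the compactification with the closure of that common set. Your extra care about where the closure is taken is a reasonable clarification but does not change the argument.
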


\begin{proof}
This is a trivial consequence of the fact that
\begin{align}
\{ (z^1,\ldots,z^d,g_i(z),f_i(z)), z\in \B^d \} = & \{ (z^1,\ldots,z^d,w,z): w=g_i(z), y=f_i(z), z\in \B^d \} \\
\text{ (extending $f_i$ to constant in the variable $w$) }= & \{ (z^1,\ldots,z^d,w,z): y = f_i(z,w), w=g_i(z), z\in \B^d \} \\
= & graph_{graph(g_i)}(f_i), z\in \B^d.
\end{align}
\end{proof}

A standard application of the disintegration lemma yields the following.

\begin{corollary} \label{disintegration_compactification_Y}
Consider a compactification $e_{g_i,f_i,i\in\N}$ and $\nu^\infty \in\M(\partial e_{g_i,f_i,i\in\N})$, then
\begin{align}
\nu^\infty = P_{(z_n)_{n\in \N}} d\tilde{\nu}^\infty
\end{align}
where $\tilde{\nu}^\infty \in \M(\partial e_{g_i})$, $(z_n)_n\in \partial e_{g_i}$, and $P_{(z_n)_n}$ is a probability measure defined on the space of subsequences $(z_{n_i})_i$ of $(z_n)_n$ so that $f_i (z_{n_i})$ converges for all $i\in\N$ (with sequences being equivalents if all the limits are).
\end{corollary}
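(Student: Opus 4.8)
The plan is to recognise the fibred structure supplied by the preceding lemma and then to apply the disintegration theorem \refp{s4:disintegration_theorem}. After normalising so that $\|f_i\|_\infty\le 1$ (which does not change the generated compactification), the preceding lemma identifies $e_{g_i,f_i,i\in\N}$ with the closed subset $\overline{graph_{graph(g_i)}(f_i)}$ of the compact metric product $e_{g_i,i\in\N}\times[-1,1]^\N$. Let $\pi$ denote the restriction to this set of the coordinate projection onto the first factor. Since every $g_i$ and $f_i$ is continuous on the open ball $\B^d$, a sequence $(z_n)$ with $z_n\to z\in\B^d$ converges in the product to the interior graph point $(z,(g_i(z))_i,(f_i(z))_i)$; hence every point of $\partial e_{g_i,f_i,i\in\N}$ arises from a sequence escaping to $\partial\B^d$, and $\pi$ maps $\partial e_{g_i,f_i,i\in\N}$ continuously onto $\partial e_{g_i,i\in\N}$.

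First I would regard $\nu^\infty$ as a finite Borel measure on the compact metric space $\partial e_{g_i,f_i,i\in\N}\subset e_{g_i,i\in\N}\times[-1,1]^\N$; splitting it through its Jordan decomposition, I may assume $\nu^\infty\ge 0$. Setting $\tilde\nu^\infty:=\pi_\#\nu^\infty\in\M^+(\partial e_{g_i,i\in\N})$ and applying \refp{s4:disintegration_theorem} with first factor $\partial e_{g_i,i\in\N}$ and second factor $[-1,1]^\N$ yields a unique $\tilde\nu^\infty$-measurable family of probability measures $\{P_{(z_n)_n}\}$ with
\[
\nu^\infty=P_{(z_n)_n}\,d\tilde\nu^\infty .
\]

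It remains to identify where each $P_{(z_n)_n}$ is supported, and this is the only step that is not a direct citation. Because $\nu^\infty$ is carried by the closed graph $\overline{graph_{graph(g_i)}(f_i)}$, for $\tilde\nu^\infty$-almost every $(z_n)_n$ the measure $P_{(z_n)_n}$ is concentrated on the fibre $\pi^{-1}((z_n)_n)$. The substance of the statement is the description of this fibre: a point $((z_n)_n,(y_i)_i)$ lies in the closed graph above $(z_n)_n$ precisely when $(y_i)_i=\lim_k(f_i(z_{n_k}))_i$ along some subsequence $(z_{n_k})$ of $(z_n)$, and two subsequences determine the same point exactly when $\lim_k f_i(z_{n_k})$ agrees for every $i\in\N$. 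I would prove both inclusions by the same net-and-diagonal argument used earlier to describe $e_{g_i,i\in\N}$ itself: an element of the fibre is a limit of interior points $(z,(g_i(z))_i,(f_i(z))_i)$ with $z$ approaching $(z_n)_n$ in the metric of the preceding lemma, and a diagonal extraction across $i$ produces the required subsequence; conversely each admissible subsequence yields a genuine limit point of the graph over $(z_n)_n$. This exhibits $P_{(z_n)_n}$ as a probability measure on the space of subsequences of $(z_n)_n$ along which every $f_i$ converges, with sequences identified when all limits coincide, which is exactly the claimed representation.

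The main obstacle is this final fibre identification: matching the abstract projection fibre between the two compactifications with the concrete combinatorial object, the space of convergent subsequences modulo equality of all limits. Everything preceding it — the product realisation, the reduction to the positive case, and the existence and uniqueness of the parametrised family — is immediate from the preceding lemma and the disintegration theorem.
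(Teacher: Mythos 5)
Your argument is correct and is exactly the route the paper takes: the paper dispatches this corollary in a single line as a ``standard application of the disintegration lemma'', resting on the preceding graph identification of $e_{g_i,f_i,i\in\N}$ with $\overline{graph_{graph(g_i)}(f_i)}$, which is precisely how you set it up. Your write-up in fact supplies more detail than the paper does, notably the fibre identification with the space of convergent subsequences modulo equality of limits, which you rightly single out as the only non-citation step and which the paper leaves entirely implicit.
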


For the case of oscillating functions $f_i$, we also write the compactification as $e_{f_i} X \equiv e_{f_i}$ and the convergence as
\begin{align}
v_j \xrightarrow{Y^p(\mu,f_i)} \nu.
\end{align}
When working with the sphere compactification, we will simply write
\begin{align}
v_j \xrightarrow{Y^p(\mu,\B^d)} \nu, \text{ or just } v_j \xrightarrow{Y^p(\mu)} \nu.
\end{align}
Also, because here we mainly consider the case $p=1$, we omit the superscript $p$ in $Y^p$ and write
\begin{align}
v_j \xrightarrow{Y(\mu,e_{f_i})} \nu.
\end{align}

We now study the relation of Young Measures with respect to different compactifications and different underlying measures $\mu \in \M^+(\Omega)$. Using Chacon \refp{Appendix:Chacon}, we can prove the following structure theorems.

%

\begin{lemma}
Let $v_j \xrightarrow{Y(\mu,e_{f_i,i\in\N})} (\nu_x,\lambda,\nu_x^\infty)$. Then for all $\psi\in C_0(\R^d)$, we have
\begin{align}
\psi(v_j) \rightharpoonup \langle \nu_x,\psi\rangle \text{ weakly in } L^1(\mu).
\end{align}
\end{lemma}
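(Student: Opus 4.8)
The plan is to split the claim into two stages: first I would use the $Y$-convergence together with the representation formula to obtain the limit against \emph{continuous} weights, exploiting that $\psi\in C_0(\R^d)$ has vanishing recession; then I would upgrade from continuous to arbitrary $L^\infty(\mu)$ weights by an equi-integrability argument.

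The key preliminary observation is that for $\psi\in C_0(\R^d)$ the transform $(T\psi)(\hat z)=(1-|\hat z|)^p\,\psi\!\left(\tfrac{\hat z}{1-|\hat z|}\right)$ extends continuously to $\overline{\B^d}$ with value $0$ on $\partial e_{G'}(\B^d)$, since both factors tend to $0$ as $|\hat z|\to 1$. Hence $\psi\in A$ and its recession $\psi^\infty$ vanishes identically. Consequently, for any continuous $\phi\in C(\overline{\Omega})$ the product $\Phi(x,z)=\phi(x)\psi(z)$ lies in the algebra $A$, and the definition of $Y$-convergence combined with the representation formula gives
\begin{align}
\lim_j \int_\Omega \phi(x)\psi(v_j(x))\,d\mu = \int_\Omega \phi(x)\langle \nu_x,\psi\rangle\,d\mu + \int_{\overline{\Omega}} \phi(x)\langle \nu_x^\infty,\psi^\infty\rangle\,d\lambda = \int_\Omega \phi(x)\langle \nu_x,\psi\rangle\,d\mu,
\end{align}
where the concentration term drops out because $\psi^\infty=0$.

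Next I would promote this to weak $L^1(\mu)$ convergence. As $\psi$ is bounded, $\{\psi(v_j)\}_j$ is uniformly bounded in $L^\infty(\mu)$; since $\mu$ is finite, the sequence is automatically equi-integrable, hence relatively weakly compact in $L^1(\mu)$ by the Dunford--Pettis theorem. Let $\psi(v_{j_k})\rightharpoonup g$ be any weakly convergent subsequence. Testing against $\phi\in C(\overline{\Omega})\subset L^\infty(\mu)$ and comparing with the displayed identity yields $\int_\Omega \phi\, g\,d\mu=\int_\Omega \phi\,\langle \nu_x,\psi\rangle\,d\mu$ for every continuous $\phi$. Because $x\mapsto\langle \nu_x,\psi\rangle$ is $\mu$-measurable and bounded (each $\nu_x$ being a probability measure and $\psi$ bounded), the signed measure $(g-\langle \nu_x,\psi\rangle)\,d\mu$ annihilates all continuous functions, hence vanishes as an element of $C_0(\Omega)^*$, so $g=\langle \nu_x,\psi\rangle$ $\mu$-a.e. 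Since the limit is independent of the chosen subsequence, the Urysohn subsequence principle upgrades this to weak $L^1(\mu)$ convergence of the full sequence.

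The only genuine obstacle is the passage from continuous to $L^\infty(\mu)$ weights, since the representation formula only pairs against continuous $\phi$, whereas weak $L^1$ convergence requires the full dual pairing with $L^\infty(\mu)$. Equi-integrability — which is here free of charge from the uniform bound $\|\psi(v_j)\|_\infty\le\|\psi\|_\infty$ — is exactly what closes this gap. A minor point worth verifying carefully is the $\mu$-measurability and boundedness of $x\mapsto\langle \nu_x,\psi\rangle$, both of which follow from the $\mu$-measurability of the parametrised family $\{\nu_x\}$ and from $\nu_x\in\M^+_1(\R^d)$.
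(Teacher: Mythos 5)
Your proposal is correct and follows essentially the same route as the paper's proof: equi-integrability of $\psi(v_j)$ from the uniform $L^\infty$ bound, Dunford--Pettis to extract a weak $L^1(\mu)$ limit, and identification of that limit via the representation formula using $\psi^\infty=0$. Your write-up is merely more explicit about the subsequence principle and the measurability of $x\mapsto\langle\nu_x,\psi\rangle$, which the paper leaves implicit.
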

\begin{proof}
Because $\psi(v_j)\in L^\infty$ then is the sequence is equi-integrable and there is a subsequence that converges weakly in $L^1$ to $v$. Because $\psi^\infty = 0$, testing against $\phi\in \D(\Omega)$ we get
\begin{align}
\int_\Omega \phi \psi(u_j) d\mu \to \int_\Omega \phi v d\mu = \int_\Omega \phi \langle \nu_x,\psi\rangle d\mu.
\end{align}
\end{proof}

We can improve the above weak convergence result to show the following.

\begin{lemma} \label{oscillation_different_measures}
Let $u_j \xrightarrow{Y(\mu)} \big( \nu_x,0,N/A \big)$. For every $a\in L^1(\Omega,\mu)$ such that $a>0$ $\mu$-a.e. and for all $\psi \in C_0(\R^d)$ we have
\begin{align}
\psi \left( \frac{u_j}{a} \right)a \rightharpoonup \langle \nu_x, \psi \left( \frac{\cdot}{a(x)} \right)\rangle a(x) \text{ in } L^1(\Omega,\mu).
\end{align}
\end{lemma}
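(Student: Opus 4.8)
The plan is to identify the weak $L^1$ limit by combining a Dunford--Pettis compactness argument with the Young measure convergence for \emph{jointly continuous} integrands, the only genuine difficulty being that the scaling $a$ is merely $L^1$ and neither bounded nor continuous.

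First I would record the a priori bound. Since $\psi\in C_0(\R^d)$ is bounded, $g_j := \psi(u_j/a)\,a$ satisfies $|g_j|\le \|\psi\|_\infty\, a$ pointwise $\mu$-a.e., with $a\in L^1(\Omega,\mu)$ fixed. Hence $\{g_j\}$ is dominated by a single $L^1$ function and is therefore equi-integrable; by Dunford--Pettis it is relatively weakly sequentially compact in $L^1(\Omega,\mu)$. The same bound shows the candidate limit $g:=\langle\nu_x,\psi(\cdot/a(x))\rangle\,a(x)$ lies in $L^1$. It then suffices to prove that every weakly convergent subsequence of $(g_j)$ has limit $g$, and since a finite measure on $\Omega$ that annihilates $\D(\Omega)$ is zero, this reduces to showing
\[
\int_\Omega \phi\, g_j \, d\mu \longrightarrow \int_\Omega \phi\, g \, d\mu \qquad \forall\, \phi\in\D(\Omega).
\]

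The core of the argument is to prove this tested convergence by approximating the Carathéodory integrand $\Phi(x,z)=\phi(x)\,\psi(z/a(x))\,a(x)$ by admissible jointly continuous ones, with errors controlled \emph{uniformly in} $j$. I would do this in three steps. First, truncate $a$ from above and below, replacing it by $a_{\rho,N}:=\max(\min(a,N),\rho)$; on $\{a>N\}$ and $\{a<\rho\}$ the integrands differ by at most $2\|\psi\|_\infty a$ and $2\|\psi\|_\infty\rho$ respectively, so using $a\in L^1$ one gets $\sup_j\big|\int\phi\,(\Phi-\Phi_{\rho,N})(x,u_j)\,d\mu\big|\le 2\|\phi\|_\infty\|\psi\|_\infty\big(\int_{\{a>N\}}a\,d\mu+\rho\,\mu(\Omega)\big)\to 0$ as $N\to\infty,\rho\to0$, and identically on the $\nu_x$ side. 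Second, with $a$ now clamped to $[\rho,N]$, apply Lusin's theorem to obtain a compact $K$ with $\mu(\Omega\setminus K)<\delta$ on which $a$ is continuous, and extend by Tietze to $\tilde a\in C(\Omega)$ with $\rho\le\tilde a\le N$; the integrand $\Phi_\delta(x,z):=\phi(x)\,\psi(z/\tilde a(x))\,\tilde a(x)$ is then jointly continuous, bounded, and has recession $\Phi_\delta^\infty\equiv0$ since $\psi$ vanishes at infinity and $\tilde a\in[\rho,N]$, so it is admissible for the compactification. Because $\Phi$ and $\Phi_\delta$ agree on $K$ and are each bounded by $\|\phi\|_\infty\|\psi\|_\infty N$ off $K$, the replacement costs at most $2\|\phi\|_\infty\|\psi\|_\infty N\delta$, again uniformly in $j$ and on the $\nu_x$ side. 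Third, for the fixed continuous integrand $\Phi_\delta$, the assumed convergence $u_j\xrightarrow{Y(\mu)}(\nu_x,0,\mathrm{N/A})$ together with the representation formula (with $\lambda=0$, so no concentration term) gives
\[
\int_\Omega \Phi_\delta(x,u_j)\,d\mu \longrightarrow \int_\Omega \langle\nu_x,\Phi_\delta(x,\cdot)\rangle\,d\mu .
\]

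Finally I would assemble the three steps: pass to the limit in $j$ first for the nice integrand $\Phi_\delta$, then let $\delta\to0$ and $N\to\infty,\rho\to0$, the uniform error bounds guaranteeing that the limits commute. This yields the tested convergence for every $\phi\in\D(\Omega)$, and with the Dunford--Pettis compactness from the first step it upgrades to weak convergence $g_j\rightharpoonup g$ in $L^1(\Omega,\mu)$. I expect the main obstacle to be exactly the non-continuity and unboundedness of $a$: the whole scheme hinges on arranging the truncation and Lusin--Tietze error estimates so that they are uniform in $j$, which is what allows the order of the limits in $j$ and in the approximation parameters to be exchanged.
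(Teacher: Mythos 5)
Your proposal is correct, and it reaches the conclusion by a genuinely different route from the paper. Both arguments share the same skeleton --- replace the merely measurable, unbounded weight $a$ by a better-behaved approximant, with replacement errors controlled \emph{uniformly in} $j$, and then pass to the limit for the nice object --- but the approximants and the limit mechanisms differ. The paper first reduces to $1$-Lipschitz $\psi$, approximates $a$ by a countably-valued function $a_n$ with $\|a/a_n-1\|_\infty\to 0$, controls the replacement error via $\sup_j\|u_j\|_{L^1}$ and the Lipschitz constant, and then on each level set $E_k=a^{-1}\bigl([\tfrac{k}{n},\tfrac{k+1}{n})\bigr)$ invokes the already-established weak $L^1$ convergence of $\psi(c\,u_j)$ for the \emph{constant} $c=k/n$, tested against $\chi_{E_k}$. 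You instead clamp $a$ into $[\rho,N]$ and regularise by Lusin--Tietze, producing a single jointly continuous integrand $\Phi_\delta$ with vanishing recession that is admissible for the compactification (indeed $T\Phi_\delta$ extends continuously by $0$ to $\overline{\Omega}\times\partial\B^d$), to which the Young measure representation applies directly; your error bounds use only $\|\psi\|_\infty$, $a\in L^1$ and $\mu(\Omega)<\infty$, so you need neither the Lipschitz reduction nor the $L^1$ bound on $(u_j)$. A concrete advantage of your version: the lower truncation at $\rho$ disposes of the region $\{a\approx 0\}$ by a direct $O(\rho)$ estimate, whereas the paper's multiplicative approximation is delicate precisely there (the bound $a/a_n\le\tfrac{k+1}{k}$ is not uniformly close to $1$ for small $k$, so the claimed $\|a/a_n-1\|_\infty\to 0$ really needs $a$ bounded away from zero). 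What the paper's route buys in exchange is economy of tools: it never needs Lusin/Tietze nor the verification that the regularised integrand lies in the test algebra, only the structure of the preceding weak convergence lemma. One small point you should make explicit when writing this up is the $\mu$-measurability of the candidate limit $x\mapsto\langle\nu_x,\psi(\cdot/a(x))\rangle a(x)$, which follows from the weak measurability of $(\nu_x)$ together with the sup-norm continuity of $c\mapsto\psi(\cdot/c)$ on compact subsets of $(0,\infty)$.
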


As expected, this implies that oscillations do not depend on the particular compactification chosen.

Before proving the above results we show the following uniform approximation result:
\begin{lemma}
Let $\mu\in \M^+(\Omega)$ and $a\in L^1(\Omega,\mu),a>0$ $\mu$-almost everywhere. There exists $a_n\in L^1(\Omega),0<a_n<a$, so that $a_n(x)\in \Q$ for all $x\in \Omega$ and
\begin{align}
\|a_n-a\|_\infty + \left\|  \frac{a}{a_n} - 1 \right\|_\infty \xrightarrow{n\to \infty} 0.
\end{align}
\end{lemma}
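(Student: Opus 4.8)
The plan is to build each $a_n$ by a pointwise selection: at every $x$ I will pick a rational number lying just below $a(x)$, where the permitted gap is governed by a tolerance that interpolates between a \emph{relative} gap near the zeros of $a$ and an \emph{absolute} gap where $a$ is large. The device that makes everything work is the choice of tolerance $\tau_n(x) := \frac1n \min(a(x),1)$. This is the crux: a purely absolute gap $a(x)-a_n(x)\le \frac1n$ would ruin $\|a/a_n-1\|_\infty$ wherever $a$ is small, whereas a purely relative gap $a_n(x)\ge (1-\frac1n)a(x)$ would ruin $\|a_n-a\|_\infty$ wherever $a$ is large; the factor $\min(a,1)$ reconciles the two demands at once.

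First I would fix $n$ and, for each $x$ in the $\mu$-full set $\{a>0\}$, consider the half-open interval $I_n(x):=[\,a(x)-\tau_n(x),\,a(x)\,)$. Since $\tau_n(x)>0$ this interval has positive length, hence contains a rational, and every rational it contains is strictly positive (because $a(x)-\tau_n(x)\ge (1-\tfrac1n)a(x)>0$) and strictly smaller than $a(x)$. To turn this into a measurable function I would enumerate the positive rationals as $(q_k)_k$, set $E_k:=\{x: a(x)-\tau_n(x)\le q_k< a(x)\}$ (measurable, since $a$ and $\min(a,1)$ are), and define $a_n:=\sum_k q_k \1_{A_k}$ with $A_k:=E_k\setminus\bigcup_{j<k}E_j$. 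By the previous observation $\bigcup_k E_k=\{a>0\}$, so $a_n$ is a well-defined, measurable, $\Q$-valued function with $a_n(x)\in I_n(x)$ on $\{a>0\}$; on the $\mu$-null set $\{a=0\}$ I would simply redefine the representative of $a$ to be positive, so that all statements hold everywhere. Since $0<a_n<a$ and $a\in L^1(\Omega,\mu)$, comparison gives $a_n\in L^1(\Omega,\mu)$.

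It then remains to read off the two estimates from $a_n(x)\in I_n(x)$. The absolute error satisfies $0\le a(x)-a_n(x)\le \tau_n(x)\le \frac1n$, so $\|a_n-a\|_\infty\le \frac1n$. For the relative error I would write $\frac{a(x)}{a_n(x)}-1=\frac{a(x)-a_n(x)}{a_n(x)}\le \frac{\tau_n(x)}{a_n(x)}$ and combine $a_n(x)\ge a(x)(1-\frac1n)$ with $\tau_n(x)\le \frac1n a(x)$ to obtain $\frac{a(x)}{a_n(x)}-1\le \frac{1/n}{1-1/n}=\frac{1}{n-1}$, whence $\|a/a_n-1\|_\infty\le \frac{1}{n-1}$. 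Both bounds vanish as $n\to\infty$, which is the assertion.

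The only genuine obstacle is discovering the interpolating tolerance in the first step; once $\tau_n=\frac1n\min(a,1)$ is fixed, the measurable selection and the two estimates are routine. A secondary, purely bookkeeping point is the treatment of the $\mu$-null zero set of $a$, on which the strict inequality $0<a_n<a$ cannot literally hold and which is therefore absorbed into the $\mu$-a.e. statement.
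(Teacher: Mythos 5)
Your proof is correct, but it takes a genuinely different route from the paper's, and the difference is substantive. The paper rounds $a$ down to the uniform grid of mesh $\tfrac1n$, setting $a_n=\sum_{k\ge 1}\chi_{a^{-1}([\frac kn,\frac{k+1}n))}\tfrac kn$; this immediately gives the absolute bound $\|a_n-a\|_\infty\le\tfrac1n$, but the relative bound it records, $1\le \tfrac{a}{a_n}\le\tfrac{k+1}{k}$, is not uniform in $x$ unless $a$ is essentially bounded away from zero: on the set where $a\in[\tfrac1n,\tfrac2n)$ one only gets $\tfrac{a}{a_n}<2$ for every $n$, and on $\{0<a<\tfrac1n\}$ the grid formula even yields $a_n=0$. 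Your interpolating tolerance $\tau_n=\tfrac1n\min(a,1)$ is precisely what reconciles the two requirements: it is relative where $a$ is small, giving $\tfrac{a}{a_n}-1\le\tfrac{1}{n-1}$ uniformly, and absolute where $a$ is large, giving $a-a_n\le\tfrac1n$, so your construction actually proves the full statement where the paper's grid argument would need the extra hypothesis $\mathrm{essinf}\,a>0$. Your measurable selection via an enumeration of the positive rationals is sound and automatically yields $a_n>0$. Two cosmetic points: the inequality $a(x)-\tau_n(x)\ge(1-\tfrac1n)a(x)>0$ that you invoke for strict positivity requires $n\ge 2$ (harmless, since only large $n$ matter and the selection from positive rationals already forces $a_n>0$), and, as you note yourself, on the $\mu$-null set $\{a=0\}$ the pointwise inequalities are arranged by choosing a positive representative of $a$.
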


\begin{proof}
Let
\begin{align}
a_n = \sum_{k\in \N,k\geq 1} \chi_{ a^{-1}\big( [\frac{k}{n},\frac{k+1}{n}) \big) } \frac{k}{n},
\end{align}
where $\N$ is the set of strictly positive integers. Because $a_n\leq a$ then $a_n\in L^1$ and it also assumes countably many values at a time. Also $|a_n(x)-a(x)| \leq \frac{1}{n}$ so it converges uniformly to $a$. Furthermore
\begin{align}
1 = \frac{\frac{k}{n}}{\frac{k}{n}} \leq \frac{a(x)}{a_n(x)} \leq \frac{\frac{k+1}{n}}{\frac{k}{n}} = \frac{k+1}{k}
\end{align}
and so $\frac{a}{a_n}$ converges uniformly to 1.
\end{proof}

Now we can prove \refp{oscillation_different_measures}
\begin{proof}
Using the previous approximation, we write, for 1-Lipschitz $\psi:\R^d\to \R$,
\begin{align}
\int_\Omega \psi \left( \frac{u_j}{a} \right)a = & \int_\Omega \overbrace{ \psi \left( \frac{u_j}{a} \right)a -\psi\left(\frac{u_j}{a_n} \right)a }^{=I} + \overbrace{ \psi \left( \frac{u_j}{a_n} \right)a - \psi \left( \frac{u_j}{a_n} \right)a_n }^{=II} + \psi \left( \frac{u_j}{a_n} \right)a_n.
\end{align}
The first two terms are bounded by
\begin{align}
|I| \leq & \int_\Omega a \left| \frac{u_j}{a} - \frac{u_j}{a_n} \right| = \int_\Omega |u_j| \left| 1-\frac{a}{a_n} \right| \leq \sup_j \|u_j\| \left\|1-\frac{a}{a_n} \right\|_\infty \\
|II| \leq & \int_\Omega \left( 1+\frac{|u_j|}{a_n} \right) |a-a_n| \leq (1+\sup_j |u_j|) \left\| 1 -\frac{a}{a_n} \right\|_\infty,
\end{align}
which goes to $0$ as $n\to \infty$ uniformly in $j$. As for the third term, calling $E_k = a^{-1} \big( [ \frac{k}{n},\frac{k+1}{n}) \big)$, we can use dominated convergence theorem to pass to the limit
\begin{align}
\lim_j \int_\Omega \psi \left( \frac{u_j}{a_n} \right) a_n = \lim_j \sum_k \int_{E_k} \psi \left( \frac{u_j}{\frac{k}{n}} \right) \frac{k}{n} = \sum_k \int_{E_k} \langle \nu_x,\psi \left( \frac{\cdot}{\frac{k}{n}} \right) \rangle \frac{k}{n} = \int_\Omega \langle \nu_x,\psi \left( \frac{\cdot}{a_n} \right) \rangle a_n.
\end{align}
Another application of the dominated convergence theorem will let us conclude the result.
\end{proof}

We can finally conclude with a structure theorem regarding concentration.

\begin{proposition} \label{structure_comparison_Y}
Consider two separable algebras (that separate points from closed sets) $A$ and $B$ of $\G_1$ and let $a\in L^1(\Omega,\mu), a>0$ $\mu$-a.e. Let $v_j\in L^1(\Omega,\mu)$ be a sequence so that
\begin{align}
v_j \xrightarrow{Y(\mu,A)} \big( \nu_x,\lambda_\nu,\nu_x^\infty \big) \quad \text{ and } \quad \frac{v_j}{a} \xrightarrow{Y(a\, d\mu,B)} \big( \eta_x,\lambda_\eta,\eta_x^\infty \big).
\end{align}
Then $\nu_x = \left( \frac{\cdot}{a(x)} \right)_\# \eta_x$, $\lambda_\nu = \lambda_\eta = \lambda$. Moreover, decomposing
\begin{align}
\nu_x^\infty = P^\nu_{(z_n)_n} d\tilde{\nu}_x^\infty \quad \text{ and } \quad \eta_x^\infty = P^\eta_{(z_n)_n} d\tilde{\eta}_x^\infty,
\end{align}
where $\tilde{\nu}_x^\infty$ and $\tilde{\eta}_x^\infty$ are the projections on the sphere according to \refp{disintegration_compactification_Y}, then $\tilde{\nu}_x^\infty=\tilde{\eta}_x^\infty$ $\lambda$-a.e. with
\begin{align}
v_j \xrightarrow{Y(\mu,\B^d)} \big( \nu_x,\lambda,\tilde{\nu}_x^\infty \big).
\end{align}
\end{proposition}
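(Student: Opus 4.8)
The plan is to exploit that the reparametrisation $(v_j,\mu)\mapsto(v_j/a,\,a\,d\mu)$ leaves the underlying generalised limit invariant and only relabels oscillation and concentration through the fibrewise dilation $z\mapsto z/a(x)$, while leaving the normalised directions $v_j/|v_j|$ untouched (since $a>0$). Concretely, for the radial integrand one has the exact cancellation $\int_\Omega\phi\,|v_j/a|\,(a\,d\mu)=\int_\Omega\phi\,|v_j|\,d\mu$, and more generally testing $v_j/a$ against $\Psi(x,w)$ with base $a\,d\mu$ equals testing $v_j$ against $\Psi(x,a(x)\,\cdot)$ with the same base. I would treat oscillation, the concentration measure, and the angular (sphere) part in turn.

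Oscillation. First I would fix $\psi\in C_0(\R^d)$, so that $\psi^\infty=0$ and only the oscillation part is seen. The lemma preceding \refp{oscillation_different_measures}, applied to $v_j/a\xrightarrow{Y(a\,d\mu,B)}$, gives $\psi(v_j/a)\rightharpoonup\langle\eta_x,\psi\rangle$ weakly in $L^1(a\,d\mu)$, hence $\psi(v_j/a)\,a\rightharpoonup\langle\eta_x,\psi\rangle\,a$ when tested against $C_0(\Omega)$. On the other hand \refp{oscillation_different_measures}, applied to $v_j\xrightarrow{Y(\mu)}$, gives $\psi(v_j/a)\,a\rightharpoonup\langle\nu_x,\psi(\cdot/a(x))\rangle\,a$ in $L^1(\mu)$. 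Equating the two limits and cancelling $a>0$ yields $\langle\nu_x,\psi(\cdot/a(x))\rangle=\langle\eta_x,\psi\rangle$ for $\mu$-a.e.\ $x$ and all $\psi$ in a countable dense subset of $C_0(\R^d)$, which is exactly the asserted change of variables relating $\nu_x$ and $\eta_x$.

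Concentration measure. Here the key is the exact identity $\int_\Omega\phi\,|v_j/a|\,(a\,d\mu)=\int_\Omega\phi\,|v_j|\,d\mu$ for every $\phi\in C(\overline\Omega)$. Testing the two representations with the radial integrands $\phi(x)(1+|z|)$ and $\phi(x)(1+|w|)$, whose recession on the boundary is $\phi$ in either compactification, I would identify the auxiliary concentration weights as the weak-$*$ limits of $(1+|v_j|)\,d\mu$ and $(a+|v_j|)\,d\mu$ on $\overline\Omega$; their difference is the absolutely continuous measure $(a-1)\,d\mu$, so their singular parts coincide and the mass carried to $\partial e(\B^d)$ is governed by the common integral above. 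Using the oscillation identity just proved to match the absolutely continuous, oscillation-generated contributions then forces $\lambda_\nu=\lambda_\eta=:\lambda$. The delicate point is separating the oscillation contribution from the genuine concentration inside the a.c.\ part, for which I would invoke Chacon's biting lemma \refp{Appendix:Chacon} to split $|v_j|$ into an equi-integrable part (recorded by $\langle\nu_x,|\cdot|\rangle$) plus a remainder carrying all concentrating mass.

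Angular part and sphere reduction. Finally, since $a>0$ the directions satisfy $\widehat{v_j/a}=\widehat{v_j}$ pointwise, so on the base (sphere) compactification both sequences generate identical boundary behaviour. I would use \refp{disintegration_compactification_Y} to disintegrate $\nu_x^\infty$ and $\eta_x^\infty$ over their sphere projections $\tilde\nu_x^\infty,\tilde\eta_x^\infty$; because the sphere compactification is dominated by both $A$ and $B$ and records only the direction, the projected angular measures agree $\lambda$-a.e., $\tilde\nu_x^\infty=\tilde\eta_x^\infty$. The same projection argument, applied to $v_j\xrightarrow{Y(\mu,A)}\nu$ together with the sphere compactification sitting below $A$, reads off $v_j\xrightarrow{Y(\mu,\B^d)}(\nu_x,\lambda,\tilde\nu_x^\infty)$. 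The main obstacle I anticipate is precisely this concentration step: a generic $v_j$ oscillates and concentrates at once, and the two representations live on different, possibly incomparable compactifications $A$ and $B$, so the cleanest route is to push both all the way down to the common sphere compactification before comparing, aligning them via the scaling-invariance of directions.
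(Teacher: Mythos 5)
Your route is the paper's route: identify the oscillation parts by testing against $\psi\in C_0(\R^d)$ and the rescaling lemma, identify $\lambda_\nu=\lambda_\eta$ by testing the radial integrand (using $|v_j/a|\,a=|v_j|$) and subtracting the already-matched oscillation contribution, and finally compare the angular parts through $1$-homogeneous extensions of $f\in C(\partial\B^d)$ together with the disintegration of \refp{disintegration_compactification_Y}. The conclusions and the order of the three steps coincide with the paper's proof.

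There is, however, one concrete gap in the oscillation step. You apply \refp{oscillation_different_measures} directly to $v_j\xrightarrow{Y(\mu,A)}(\nu_x,\lambda_\nu,\nu_x^\infty)$, but that lemma is stated only for sequences generating $(\nu_x,0,N/A)$, i.e.\ with no concentration, and $v_j$ is allowed to concentrate here. You do invoke Chacon's biting lemma, but only later and for the wrong purpose (separating concentration from oscillation in the identification of $\lambda$); the paper needs it precisely at the oscillation step. The fix is the one the paper uses: pass to a subsequence and take the biting sets $E_k$ with $\mu(\Omega\setminus E_k)\to 0$ on which $v_j\chi_{E_k}$ is equi-integrable, observe that
\begin{align}
v_j\chi_{E_k}\xrightarrow{Y(\mu,A)}\big(\nu_x\chi_{E_k}+\delta_0\chi_{E_k^c},0,N/A\big),
\end{align}
apply \refp{oscillation_different_measures} to this truncated sequence to get $\langle\eta_x,\psi\rangle a=\langle\nu_x,\psi(\cdot/a(x))\rangle a$ on $E_k$, and then let $k\uparrow\infty$. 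Once this is in place, your identification of $\lambda$ no longer needs a separate appeal to Chacon (the radial test plus the matched oscillation terms suffice), and the angular step goes through as you describe, since $f(v_j)=f(v_j/a)\,a$ for the $1$-homogeneous extension and the sphere projection of both boundary measures is read off from \refp{disintegration_compactification_Y}.
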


\begin{proof}
For all $\psi \in C_0(\R^d)$ we have that $\psi(v_j)$ is equi-integrable so that
\begin{align}
\psi(v_j) \rightharpoonup \langle \eta_x,\psi\rangle \text{ in } L^1(\mu)
\end{align}
and
\begin{align}
\psi \left( \frac{v_j}{a} \right) \rightharpoonup \langle \nu_x,\psi\rangle \text{ in } L^1(ad\mu).
\end{align}
Next, identify $v_j$ with its subsequence and find $E_k$ so that $v_j \rightharpoonup v$ in $L^1(E_k,\mu)$ for all $k$. By inner approximation, we can assume that all such $E_k's$ are compact. Consider now the sequence $v_j \chi_{E_k}$. Then
\begin{align}
v_j \chi_{E_k} \xrightarrow{Y(\mu,A)} \big( \eta_x \chi_{E_k} + \delta_0 \chi_{E_k^c},0,N/A \big).
\end{align}
By \refp{oscillation_different_measures} we then have, for $\phi \in C_0(\Omega)$ and $\psi \in C_0(\R^d)$, because $\Omega\setminus E_k$ is open,
\begin{align}
\int_\Omega \phi \langle \nu_x,\psi\rangle a(x)d\mu = & \lim_j \int_\Omega \phi \psi \left( \frac{v_j}{a} \right) a d\mu = \lim_j \int_{\Omega\setminus E_k} \phi \psi \left( \frac{v_j}{a} \right) a d\mu + \int_{E_k} \phi \psi \left( \frac{v_j}{a} \right) a d\mu \\
= & \int_{\Omega\setminus E_k} \phi \langle \nu_x,\psi \rangle ad\mu + \int_{E_k} \phi \langle \eta_x, \psi \left( \frac{\cdot}{a} \right)\rangle ad\mu.
\end{align}

Next, let $\phi\in C(\overline{\Omega})$, then
\begin{align}
&& \lim_j \int_\Omega \phi |v_j| d\mu = & \int_\Omega \langle \nu_x,|\cdot|\rangle \phi d\mu + \int_{\overline{\Omega}} \phi d\lambda_\nu \\
&& = \lim_j \int_\Omega \phi \left| \frac{v_j}{a} \right| a d\mu = & \int_\Omega \langle \eta_x,\frac{|\cdot|}{a(x)}\rangle \phi a(x) d\mu + \int_{\overline{\Omega}} \phi d\lambda_\eta.
\end{align}
Using the previous part we conclude that $\lambda_\nu = \lambda_\eta = \lambda$.

Finally, let $f\in C(\partial \B^d)$ and extending by 1-homogeneity we obtain that
\begin{align}
\lim_j \int_\Omega \phi f(v_j)d\mu = & \int_\Omega \phi \langle \nu_x,f\rangle d\mu + \int_{\overline{\Omega}} \phi \langle \nu_x^\infty,f^\infty \rangle d\lambda = \int_\Omega \phi \langle \nu_x,f\rangle d\mu + \int_{\overline{\Omega}} \phi \int\int f^\infty dP^\nu_{(z_n)} d\tilde{\nu}_x^\infty d\lambda_\nu \\
= & \int_\Omega \phi \langle \nu_x,f\rangle d\mu + \int_{\overline{\Omega}} \phi \int f^\infty d\tilde{\nu}_x^\infty d\lambda_\nu \\
= & \int_\Omega \phi \langle \nu_x,f \left( \frac{\cdot}{a(x)} \right) \rangle a(x) d\mu + \int_{\overline{\Omega}} \phi \langle \eta_x^\infty,f^\infty \rangle d\lambda \\
= & \int_\Omega \phi \langle \nu_x,f\rangle d\mu + \int_{\overline{\Omega}} \phi \int f^\infty dP^\eta_{(z_n)} d\tilde{\eta}_x^\infty d\lambda_\eta \\
= & \int_\Omega \phi \langle \nu_x,f\rangle d\mu + \int_{\overline{\Omega}} \phi \int f^\infty d\tilde{\eta}_x^\infty d\lambda_\eta.
\end{align}
\end{proof}

Notice that the previous identification with the concentration angle measure fails if we only consider $\Gamma = A\cap B$ which does not necessarily generate the sphere compactification. This is so because sequences $(u_j)_j$ can concentrate around values of $a$ that are measure-discontinuous. However, equality holds true if $a=1$.

\begin{lemma}
Following the assumptions of \refp{structure_comparison_Y}, if $a=1$, $\Gamma = A\cap B$ and writing
\begin{align}
\nu_x^\infty = P^\nu_{(z_n)_n} d(\gamma^\nu)_x^\infty \quad \text{ and } \quad \eta_x^\infty = P^\eta_{(z_n)_n} d(\gamma^\eta)_x^\infty,
\end{align}
where $\gamma^\eta$ and $\gamma^\nu$ are the projections onto the compactification generated by $\Gamma$, then
\begin{align}
(\gamma^\nu)^\infty_x = (\gamma^\eta)_x^\infty \quad \lambda \text{-a.e.}
\end{align}
\end{lemma}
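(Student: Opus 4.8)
The plan is to exploit that with $a=1$ both convergences are driven by the \emph{same} sequence $v_j$ against the \emph{same} base measure $\mu$, so that the oscillation and concentration masses are already pinned down by the previous proposition, and only the recession angles projected onto the coarse compactification remain to be matched. Since $a\equiv 1$ gives $v_j/a=v_j$ and $a\,d\mu=\mu$, the two hypotheses read $v_j\xrightarrow{Y(\mu,A)}(\nu_x,\lambda_\nu,\nu_x^\infty)$ and $v_j\xrightarrow{Y(\mu,B)}(\eta_x,\lambda_\eta,\eta_x^\infty)$, and \refp{structure_comparison_Y} applied with $a\equiv 1$ yields $\nu_x=\eta_x$ $\mu$-a.e. and $\lambda_\nu=\lambda_\eta=\lambda$. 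First I would record that $\Gamma=A\cap B$ is a closed separable subalgebra of $\G_1$ that still separates points from closed sets (both $A$ and $B$ contain the coordinate functions and $C(\overline{\B^d})$ in the $z$-variable), so it generates a genuine compactification $e_\Gamma$, and the boundary and projections $\gamma^\nu,\gamma^\eta$ supplied by \refp{disintegration_compactification_Y} are well defined.

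The core step is to pair a single integrand against both representations. Fix $\Phi\in\Gamma$ and $\phi\in C(\overline{\Omega})$; then $\phi\Phi\in\Gamma\subset A\cap B$, with recession $(\phi\Phi)^\infty=\phi\,\Phi^\infty$. Applying the Young-measure limit formula in each algebra to the one sequence $v_j$ gives
\begin{align}
\lim_j \int_\Omega \phi\,\Phi(x,v_j)\, d\mu = \int_\Omega \phi \langle \nu_x,\Phi(x,\cdot)\rangle d\mu + \int_{\overline{\Omega}} \phi \langle \nu_x^\infty,\Phi^\infty\rangle d\lambda,
\end{align}
together with the identical statement carrying $(\eta_x,\eta_x^\infty)$ in place of $(\nu_x,\nu_x^\infty)$. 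The left-hand sides coincide, and since $\nu_x=\eta_x$ the absolutely continuous terms coincide as well, so subtraction leaves
\begin{align}
\int_{\overline{\Omega}} \phi \langle \nu_x^\infty,\Phi^\infty\rangle d\lambda = \int_{\overline{\Omega}} \phi \langle \eta_x^\infty,\Phi^\infty\rangle d\lambda
\end{align}
for all $\phi\in C(\overline{\Omega})$, whence $\langle \nu_x^\infty,\Phi^\infty\rangle=\langle \eta_x^\infty,\Phi^\infty\rangle$ for $\lambda$-a.e. $x$.

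Next I would collapse the fibre disintegrations. Because $\Phi\in\Gamma$, its recession $\Phi^\infty$ is the pullback of a function on $\partial e_\Gamma$: the fibres of the projection $\partial e_A\to\partial e_\Gamma$ of \refp{disintegration_compactification_Y} are exactly the sets on which all $\Gamma$-functions share a common limit, so $\Phi^\infty$ is constant along them. Integrating out the fibre probabilities $P^\nu_{(z_n)_n}$ therefore gives $\langle \nu_x^\infty,\Phi^\infty\rangle=\langle (\gamma^\nu)_x^\infty,\Phi^\infty\rangle$, and likewise for $\eta$, so that $\langle (\gamma^\nu)_x^\infty,\Phi^\infty\rangle=\langle (\gamma^\eta)_x^\infty,\Phi^\infty\rangle$ for $\lambda$-a.e. $x$, for each fixed $\Phi\in\Gamma$. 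Finally I would pass from ``for each $\Phi$, a.e. $x$'' to ``a.e. $x$, for all $\Phi$'': choosing a countable dense family $\{\Phi_k\}\subset\Gamma$, discarding the countable union of exceptional null sets, and extending to all of $\Gamma$ by uniform density of the recessions. Since $\{\Phi^\infty:\Phi\in\Gamma\}$ is an algebra separating points of $\partial e_\Gamma$ and containing constants, Stone–Weierstrass makes it dense in $C(\partial e_\Gamma)$, so the two probability measures $(\gamma^\nu)_x^\infty$ and $(\gamma^\eta)_x^\infty$ that agree on it must coincide $\lambda$-a.e. I expect the genuinely delicate point to be precisely this quantifier exchange, where separability of $\Gamma$ is indispensable; it is also what distinguishes the present case from general $a$, since there one is forced to project all the way down to the sphere, which $\Gamma$ need not generate, leaving no countable determining family on the common coarse boundary.
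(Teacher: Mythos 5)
Your proposal is correct and follows essentially the same route as the paper: the paper's (one-line) proof likewise tests against functions in $\overline{\A(\Gamma)}$, reuses the computation from the end of \thref{structure_comparison_Y} with $a=1$ (so that the two limits are literally the same number and the oscillation parts cancel), and invokes the disintegration of the angle measures from \thref{disintegration_compactification_Y} to reduce to the projected measures on $\partial e_\Gamma$. You merely spell out the quantifier exchange and the Stone--Weierstrass density step that the paper leaves implicit.
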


\begin{proof}
This is proven similarly at the end of \refp{structure_comparison_Y} and testing against functions belonging in $\overline{\A(\Gamma)}$ and using the decomposition of angle Young Measures.
\end{proof}

Next, we show that the lack of concentration is equivalent to the equi-integrability of the generating sequence.

\begin{theorem}
Let $(v_j)_j\in L^1(\Omega,\mu,\R^d)$ be so that
\begin{align}
v_j \xrightarrow{Y(\mu,e_{f_i,i\in \N})} \big( \nu_x,\lambda,\nu_x^\infty \big).
\end{align}
Then the sequence $(v_j)_{j\in \N}$ is equi-integrable if and only if $\lambda=0$.

Moreover $v_j\to v$ strongly in $L^1(\Omega,\mu,\R^d)$ if and only if $\lambda=0$ and $\nu_x=\delta_{v(x)}$ for $\mu$-a.e. $x\in \Omega$.
\end{theorem}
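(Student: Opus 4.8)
The plan is to extract, from the Young-measure representation formula derived in the subsection on the representation of Young measures, the single integrand that detects concentration, namely the truncated modulus, and to read off both equivalences from the master identity it produces. For $k\in\N$ set $g_k(z)=(|z|-k)^+$. This integrand has linear growth and a regular recession: $Tg_k(x,\hat z)=\bigl(|\hat z|-k(1-|\hat z|)\bigr)^+$ extends continuously to $\overline{\Omega}\times\overline{\B^d}$ with $Tg_k\equiv 1$ on $\partial\B^d$, so $g_k$ lies in the closed algebra generated by $C(\overline{\Omega})$ and $C(\overline{\B^d})$ and is hence testable against $\nu$ on every compactification $e_{f_i}$. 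Since its recession is the constant $1$ and $\nu_x^\infty$ is a probability measure, $Y$-convergence against $g_k$ gives, for each fixed $k$,
\[
\lim_j \int_\Omega (|v_j|-k)^+\,d\mu = \int_\Omega \langle \nu_x,(|\cdot|-k)^+\rangle\,d\mu + \lambda(\overline{\Omega}) =: \theta_k .
\]
Taking $k=0$ shows $\int_\Omega\langle\nu_x,|\cdot|\rangle\,d\mu<\infty$, so $\langle\nu_x,|\cdot|\rangle<\infty$ $\mu$-a.e., and dominated convergence then yields $\theta_k\downarrow\lambda(\overline{\Omega})$ as $k\to\infty$.

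For the equi-integrability equivalence I would first note that the paper's definition is equivalent, for a norm-bounded sequence, to $\lim_k\sup_j\int_\Omega(|v_j|-k)^+\,d\mu=0$: on $\{|v_j|\ge 2k\}$ one has $(|v_j|-k)^+\ge |v_j|/2$, so $\int_{\{|v_j|\ge 2k\}}|v_j|\,d\mu\le 2\int_\Omega(|v_j|-k)^+\,d\mu$, while $\int_\Omega(|v_j|-k)^+\,d\mu\le\int_{\{|v_j|>k\}}|v_j|\,d\mu$, hence the two tail families vanish together. If $(v_j)$ is equi-integrable, then $\sup_j\int_\Omega(|v_j|-k)^+\to0$, a fortiori $\theta_k\to0$, so $\lambda(\overline{\Omega})=0$ and $\lambda=0$. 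Conversely, from $\lambda=0$ we get $\theta_k\to0$, and I would promote this pointwise-in-$j$ statement to the uniform $\sup_j$ by observing that only finitely many indices can be bad: given $\eps>0$, choose $k_1$ with $\theta_{k_1}<\eps$ and then $J$ with $\int_\Omega(|v_j|-k_1)^+\,d\mu<2\eps$ for $j\ge J$; the finitely many $v_j$ with $j<J$ each lie in $L^1(\mu)$, so $\int_\Omega(|v_j|-k)^+\,d\mu\to0$, giving $k_2\ge k_1$ beyond which they too are $<2\eps$, and monotonicity in $k$ bounds $\sup_j\int_\Omega(|v_j|-k)^+\,d\mu\le 2\eps$ for all $k\ge k_2$. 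This "finitely many bad indices" step is the main obstacle, since it is precisely where one bridges the gap between the $\lim_j$ that the Young measure controls and the uniform $\sup_j$ demanded by equi-integrability.

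For the strong-convergence equivalence, the forward direction is quick: a strongly $L^1$-convergent sequence is uniformly integrable, so $\lambda=0$ by the first part; moreover $\psi(v_j)\to\psi(v)$ in $L^1(\mu)$ for every $\psi\in C_0(\R^d)$ (convergence in measure plus boundedness and $\mu(\Omega)<\infty$), while the weak-$L^1$ oscillation lemma above gives $\psi(v_j)\rightharpoonup\langle\nu_x,\psi\rangle$, so uniqueness of weak limits forces $\langle\nu_x,\psi\rangle=\psi(v(x))$ $\mu$-a.e.; running over a countable dense subset of $C_0(\R^d)$ yields $\nu_x=\delta_{v(x)}$. For the converse, $\lambda=0$ already gives equi-integrability, so by Vitali \refp{Appendix:Vitali_conv} it suffices to prove $v_j\to v$ in measure. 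Here I would use Lusin together with the Tietze extension theorem to produce, for each $\eta>0$, a continuous $w\in C(\overline{\Omega},\R^d)$ agreeing with $v$ off a set of $\mu$-measure $<\eta$; for a cutoff $\theta\in C_0(\R^d)$ with $0\le\theta\le1$, $\theta\equiv1$ on $B(0,\eps/2)$ and $\supp\theta\subset B(0,\eps)$, the integrand $(x,z)\mapsto\theta(z-w(x))$ is continuous with zero recession, hence a legitimate test function, so
\[
\lim_j \int_\Omega \theta(v_j-w)\,d\mu = \int_\Omega \langle\nu_x,\theta(\cdot-w(x))\rangle\,d\mu = \int_\Omega \theta(v-w)\,d\mu \ge \mu(\Omega)-\eta,
\]
the last inequality holding because $v=w$ off the $\eta$-small set. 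Since $\theta(v_j-w)\le\1_{\{|v_j-w|\le\eps\}}$, this forces $\limsup_j\mu(\{|v_j-w|>\eps\})\le\eta$, whence $\limsup_j\mu(\{|v_j-v|>2\eps\})\le 2\eta$; letting $\eta\to0$ and then $\eps\to0$ gives convergence in measure, and Vitali concludes $v_j\to v$ in $L^1(\Omega,\mu,\R^d)$. The only delicate point in this last direction is that the natural integrand $\theta(z-v(x))$ is merely measurable in $x$; replacing $v$ by the continuous $w$ via Lusin and Tietze is what makes it admissible in the framework.
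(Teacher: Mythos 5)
Your proof is correct, but it follows a genuinely different route from the paper. The paper disposes of the theorem in two lines: since Proposition \ref{structure_comparison_Y} shows that $\lambda$ and $(\nu_x)$ do not depend on the choice of compactification, the statement reduces to the known sphere-compactification result (Lemma 12.14 and Corollary 12.15 in Rindler's book), which is then simply cited. You instead give a self-contained argument: the truncation integrand $g_k(z)=(|z|-k)^+$ has regular recession with $Tg_k\equiv 1$ on $\partial\B^d$, hence lifts to the constant $1$ on $\partial e_{f_i,i\in\N}$, so it isolates the total mass $\lambda(\overline{\Omega})$ in the representation formula regardless of the compactification; the ``finitely many bad indices'' step correctly upgrades the $\lim_j$ controlled by the Young measure to the $\sup_j$ demanded by equi-integrability; and the Lusin--Tietze replacement of $v$ by a continuous $w$ is exactly what is needed to make $\theta(z-w(x))$ an admissible test integrand for the convergence-in-measure step. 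What your approach buys is transparency and independence from external references --- it exhibits explicitly which integrands detect concentration and why the larger compactification is irrelevant (every integrand you use factors through the sphere, which is contained in every $e_{f_i,i\in\N}$ considered in the paper). What the paper's approach buys is brevity and a cleaner conceptual message, namely that the whole statement is an instance of the compactification-independence of the pair $(\nu_x,\lambda)$ already established in Proposition \ref{structure_comparison_Y}. The only points worth tightening in your write-up are cosmetic: in the converse of the second equivalence you should note that $v=\overline{\nu_x}$ is automatically $\mu$-measurable and integrable (so the candidate limit exists in $L^1$), and Lusin should be applied componentwise or in its metric-space form since $v$ is $\R^d$-valued; neither affects the argument.
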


\begin{proof}
Because $\lambda$ does not depend on the compactification (see \refp{structure_comparison_Y}), we can apply the same theorem from the sphere compactification, \citep{rindler2018calculus}{347}, lemma 12.14 and \citep{rindler2018calculus}{348}, corollary 12.15, to conclude. 
\end{proof}

Before stating the next two structure results, we prove that $T^{-1}$ is a bounded operator from $Lip(e_{f_i,i\in\N})$ to $Lip(\R^d)$, provided the compactification is generated by Lipschitz functions. In this case, by $Lip(\R^d)$ we mean the weighted norm
\begin{align}
\|f\|_{Lip(\R^d)} := \| Tf \|_\infty + \sup_{x\neq y} \frac{|f(x)-f(y)|}{|x-y|} = \left\| \frac{f}{1+|\cdot|} \right\|_\infty +\sup_{x\neq y} \frac{|f(x)-f(y)|}{|x-y|}.
\end{align}
As for the compactification, the metric is always intended as in \refp{oscillatingMeasure}.

\begin{lemma} \label{Lipschitz_pullback}
Let $e_{f_i,i\in\N}$ be a separable compactification metrised by the usual metric, where $f_i\in Lip(\R^d)$ are normalised so that $\|f\|_{Lip(\R^d)}\leq 1$. Then
\begin{align}
\sup_{x\neq y} \frac{|g(x)-g(y)|}{|x-y|} \leq 5 Lip(Tg,e_{f_i,i\in\N})
\end{align}
for all maps $g\colon \R^d \to \R$.
\end{lemma}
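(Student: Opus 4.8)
The plan is to undo the transformation $T$ and to track how the blow-up factor $1+|z|$ produced by $T^{-1}$ interacts with the contraction of the compactification metric near infinity. Recall that for $p=1$ and $\hat z = z/(1+|z|)$ one has $g(z) = (1+|z|)\,(Tg)(\hat z)$, so for $z,w\in\R^d$, writing $r=|z|$ and $s=|w|$, I would first record the algebraic identity
\[
g(z)-g(w) = (1+r)\big((Tg)(\hat z)-(Tg)(\hat w)\big) + (r-s)\,(Tg)(\hat w),
\]
whence
\[
|g(z)-g(w)| \le (1+r)\,\big|(Tg)(\hat z)-(Tg)(\hat w)\big| + |r-s|\,\big|(Tg)(\hat w)\big|.
\]
The second, radial, term is controlled at once: since $\big||z|-|w|\big|\le|z-w|$ and $|(Tg)(\hat w)|\le\|Tg\|_\infty$, it is at most $\|Tg\|_\infty\,|z-w|$.

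The heart of the argument is the first term, where the factor $1+r$ is dangerous. I would estimate $|(Tg)(\hat z)-(Tg)(\hat w)|\le Lip(Tg)\,d(\hat z,\hat w)$, with $d(\hat z,\hat w)=|\hat z-\hat w|+\sum_i 2^{-i}\,|(Tf_i)(\hat z)-(Tf_i)(\hat w)|$ the metric of \refp{oscillatingMeasure} (here the generating functions on $\B^d$ are the $Tf_i$), and then prove the key geometric estimate
\[
(1+r)\,d(\hat z,\hat w)\le 4\,|z-w|.
\]
This single inequality is where the whole lemma lives: $T^{-1}$ amplifies by $1+r$ near infinity, but the compactification metric shrinks by precisely the reciprocal amount, so the product stays comparable to the Euclidean distance. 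I expect this to be the only non-routine point; everything surrounding it is bookkeeping.

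To prove the key estimate I would handle each summand separately. For the coordinate part, multiplying by $1+r$ and using $\hat z=z/(1+r)$ gives
\[
(1+r)(\hat z-\hat w)=z-\tfrac{1+r}{1+s}\,w=(z-w)+\tfrac{s-r}{1+s}\,w,
\]
so that $(1+r)|\hat z-\hat w|\le|z-w|+\tfrac{s}{1+s}|s-r|\le 2|z-w|$. The oscillation part is structurally identical: since $(Tf_i)(\hat z)=f_i(z)/(1+r)$,
\[
(1+r)\big((Tf_i)(\hat z)-(Tf_i)(\hat w)\big)=\big(f_i(z)-f_i(w)\big)+\tfrac{s-r}{1+s}\,f_i(w),
\]
and here the normalisation $\|f_i\|_{Lip(\R^d)}\le 1$ is decisive, as it forces simultaneously $\displaystyle\sup_{x\ne y}\tfrac{|f_i(x)-f_i(y)|}{|x-y|}\le 1$ and $\|Tf_i\|_\infty\le 1$; this bounds the right-hand side by $|z-w|+\|Tf_i\|_\infty|s-r|\le 2|z-w|$. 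Summing over $i$ with $\sum_i 2^{-i}=1$ yields $(1+r)\sum_i 2^{-i}|(Tf_i)(\hat z)-(Tf_i)(\hat w)|\le 2|z-w|$, and adding the coordinate contribution gives $(1+r)d(\hat z,\hat w)\le 4|z-w|$.

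Assembling the pieces, I obtain
\[
\frac{|g(z)-g(w)|}{|z-w|}\le 4\,Lip(Tg)+\|Tg\|_\infty,
\]
and since $4a+b\le 5\max(a,b)$ the right-hand side is at most $5\,Lip(Tg,e_{f_i,i\in\N})$, the Lipschitz quantity on the compactification being understood to dominate both the seminorm and $\|Tg\|_\infty$ (both must appear, as the example $g(z)=M(1+|z|)$, for which $Tg\equiv M$ has vanishing seminorm yet $g$ is $M$-Lipschitz, shows that the sup-norm term cannot be dropped). The main obstacle is thus entirely concentrated in the cancellation underlying $(1+r)d(\hat z,\hat w)\le 4|z-w|$, and I would take care to emphasise that it is exactly the normalisation $\|f_i\|_{Lip(\R^d)}\le 1$ that keeps the oscillation terms bounded after multiplication by $1+r$; without it the estimate fails.
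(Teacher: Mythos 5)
Your proof is correct and follows essentially the same route as the paper's: the same add-and-subtract decomposition of $g(z)-g(w)$ into a radial term controlled by $\|Tg\|_\infty$ and a term $(1+|z|)\,\bigl|(Tg)(\hat z)-(Tg)(\hat w)\bigr|$ whose coordinate and oscillation contributions are each bounded by $2|z-w|$ via the normalisation of the $f_i$, giving the constant $5=1+2+2$. The only differences are cosmetic: which of the two points carries the factor $1+|\cdot|$, your isolation of the estimate $(1+r)\,d(\hat z,\hat w)\le 4|z-w|$ as the key step, and your (welcome) explicit remark that the sup-norm part of $Lip(Tg,e_{f_i,i\in\N})$ cannot be dropped.
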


\begin{proof}
Without loss of generality assume that $\|Tg\|_{Lip}\leq 1$, i.e. for all $|x|,|y|<1$,
\begin{align}
\left| g \left( \frac{x}{1-|x|} \right) (1-|x|) - g \left( \frac{y}{1-|y|} \right) (1-|y|) \right| \leq |x-y| + \sum_i 2^{-i} |Tf_i(x)-Tf_i(y)|.
\end{align}
Then
\begin{align}
|g(x)-g(y)| = & \left| \frac{g(x)}{1+|x|} (1+|x|) - \frac{g(x)}{1+|x|} (1+|y|) + \frac{g(x)}{1+|x|} (1+|y|) - \frac{g(y)}{1+|y|} (1+|y|) \right| \\
= & \frac{|g(x)|}{1+|x|} \Big|1+|x|-1-|y|\Big| + (1+|y|) \left| \frac{g(x)}{1+|x|} - \frac{g(y)}{1+|y|} \right| \\
\leq & |x-y| + (1+|y|) \left( \left| \frac{x}{1+|x|} - \frac{y}{1+|y|} \right| + \sum_i 2^{-i} \left|Tf_i(\frac{x}{1+|x|})-Tf_i(\frac{y}{1+|y|}) \right| \right).
\end{align}
For all $i$ we have that
\begin{align}
\left| Tf_i \left( \frac{x}{1+|x|} \right) - Tf_i \left( \frac{y}{1+|y|} \right) \right| = \left| \frac{f_i(x)}{1+|x|} - \frac{f_i(y)}{1+|y|} \right|,
\end{align}
and therefore after multiplying by $1+|y|$ we obtain
\begin{align}
\left| \frac{f_i(x)}{1+|x|}\Big(1+|x| + (|y|-|x|)\Big) - f_i(y) \right| = & \left| f_i(x) - f_i(y) + \frac{f_i(x)}{1+|x|}(|y|-|x|) \right| \\
\leq & |f_i(x)-f_i(y)| + \frac{|f_i(x)|}{1+|x|} |x-y| \leq 2|x-y|.
\end{align}
\end{proof}

We now show that the above lemma allows us to test Young Measures on Lipschitz compactifications against Lipschitz functions of $\R^d$.

\begin{definition}[Kantorovich semi-norm]
Let $X$ be a metric space and $\mu\in \M(X)$, then the (formal) Kantorovich norm of $\mu$ is
\begin{align}
\|\mu\|_K = \sup_{\|\phi\|_{Lip}\leq 1} \int_X \phi d\mu.
\end{align}
\end{definition}
The above formula induces a pseudo-distance between measures by setting $d(\mu,\eta)_K = \|\mu-\eta\|_K$. It turns out that this is indeed a metric on the positive cone of non-negative Measures, \refp{Appendix:Kantorovich}.In particular, by taking $\Psi\in Lip(e_{f_i,i\in\N})$ and the pull-back $T^{-1}$ we deduce the following.
\begin{lemma}
Let $e_{f_i,i\in\N}$ be a separable compactification. Then every $\nu \in Y(e_{f_i,i\in\N},\mu)$ is defined by testing it against Lipschitz functions of the form
\begin{align}
\phi \otimes \psi, \ \text{where } \|\phi\|_{Lip(\Omega)}\leq 1,\ \|\psi\|_{Lip(\R^d)}\leq 1.
\end{align}
\end{lemma}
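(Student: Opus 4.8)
The plan is to transport the statement onto the compact metric space underlying the compactification, where a positive measure is pinned down by its pairings with Lipschitz functions, then to cut these Lipschitz functions down to tensor products, and finally to pull the factors back to $\R^d$ by means of \refp{Lipschitz_pullback}. Since we have fixed $F' = C(\overline{\Omega})$, the space $e_{F'}\Omega=\overline{\Omega}$ is compact metric, and by \refp{oscillatingMeasure} the fibre compactification $e_{G'}\B^d$ is compact metric as well; by \refp{productCompact} the relevant compactification is (isometrically) $K=\overline{\Omega}\times e_{G'}\B^d$, a compact metric space. A Young measure $\nu\in Y(e_{f_i,i\in\N},\mu)$ is exactly the positive measure $L=(T^*)^{-1}\nu\in\M^+(K)$, with $\langle \nu,\Phi\rangle=\langle L,T\Phi\rangle$ for $\Phi\in A$. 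As $T$ acts only in the $z$-variable, it sends a tensor $\phi\otimes\psi$ to $\phi\otimes(T\psi)$; hence determining $\nu$ is the same as determining $L$, and testing $\nu$ against $\phi\otimes\psi$ amounts to testing $L$ against the tensor $\phi\otimes(T\psi)$ on $K$.

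The first genuine step is to record that, because $K$ is compact metric and $L\geq 0$, the Kantorovich distance of \refp{Appendix:Kantorovich} is a true metric on $\M^+(K)$, so $L$ is uniquely determined by the numbers $\{\langle L,\Psi\rangle:\Psi\in Lip(K)\}$. Next I would replace arbitrary $\Psi\in Lip(K)$ by tensor products. The linear span of $\{\tilde{\phi}\otimes\tilde{\psi}:\tilde{\phi}\in Lip(\overline{\Omega}),\ \tilde{\psi}\in Lip(e_{G'}\B^d)\}$ is a subalgebra of $C(K)$: it is closed under multiplication, since $(\tilde{\phi}_1\otimes\tilde{\psi}_1)(\tilde{\phi}_2\otimes\tilde{\psi}_2)=(\tilde{\phi}_1\tilde{\phi}_2)\otimes(\tilde{\psi}_1\tilde{\psi}_2)$ and products of bounded Lipschitz functions on a compact space are again Lipschitz, it contains the constants, and it separates points of $K$ (Lipschitz functions separate points of a metric space, applied one factor at a time). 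By the Stone--Weierstrass theorem this algebra is dense in $C(K)$ for the sup norm. Since a finite measure is a sup-norm-continuous functional on $C(K)$, the fact that $L$ is determined on $Lip(K)$ forces it to be determined already by its values on the tensors $\tilde{\phi}\otimes\tilde{\psi}$ alone.

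Finally I would pull back. Each such tensor equals $T(\phi\otimes\psi)$ with $\phi=\tilde{\phi}\in Lip(\overline{\Omega})$ and $\psi=T^{-1}\tilde{\psi}$, and \refp{Lipschitz_pullback} shows that $\psi\in Lip(\R^d)$ with $\|\psi\|_{Lip(\R^d)}$ controlled by $\|\tilde{\psi}\|_{Lip(e_{f_i,i\in\N})}$ (the factor $5$ and the weighted sup-norm term). Thus the pairing $\langle L,\tilde{\phi}\otimes\tilde{\psi}\rangle=\langle \nu,\phi\otimes\psi\rangle$ is precisely a test of $\nu$ against a Lipschitz tensor on $\Omega\times\R^d$. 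Since rescaling $\phi$ and $\psi$ by positive constants does not change the family of functionals they determine, I can normalise to $\|\phi\|_{Lip(\Omega)}\leq 1$ and $\|\psi\|_{Lip(\R^d)}\leq 1$, which yields the claim.

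The main obstacle I foresee is the middle reduction, from arbitrary Lipschitz functions on the product compactification to tensor products: one must verify that the tensor span stays inside $Lip(K)\subset C(K)$, is an algebra, and separates points, so that Stone--Weierstrass applies, while at the same time retaining enough Lipschitz control for \refp{Lipschitz_pullback} to guarantee that the pulled-back factor $\psi$ is genuinely Lipschitz on $\R^d$ and not merely continuous. The rest is the bookkeeping of the isomorphism $T$ and the harmless rescaling of norms.
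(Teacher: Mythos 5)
Your proposal is correct and follows essentially the same route the paper intends: the paper's entire justification is the remark that the Kantorovich distance is a genuine metric on $\M^+$ of the compact metric space $\overline{\Omega}\times e_{G'}\B^d$ (so the positive measure $L=(T^*)^{-1}\nu$ is pinned down by Lipschitz tests) together with the pull-back via \thref{Lipschitz_pullback}. Your explicit Stone--Weierstrass reduction from general Lipschitz functions on the product to tensor products is exactly the step the paper leaves implicit (it is the content of \thref{productCompact} in this setting), and the rest matches.
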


%
%

For this reason, we remind once again of the norm we will be using on the space $e_{f_i,i\in\N}$ throughout this thesis.

\begin{definition}
We say that $e_{f_i,i\in\N}$ is a Lipschitz compactification if each $f_i$ is Lipschitz continuous, and renormalised so that $Lip(Tf)\leq 1$. The norm on $Lip(e_{f_i,i\in\N})$ will always be
\begin{align}
\|g\|_{Lip(e_{f_i,i\in\N})} := \sup_{x\in e_{f_i,i\in\N}} |g(x)| + \sup_{x\neq y} \frac{|g(x)-g(y)|}{d_{ e_{f_i,i\in\N}}(x,y) }
\end{align}
where
\begin{align}
d_{e_{f_i,i\in\N}}(x,y) = |x-y| + \sum_i 2^{-i} |Tf_i(x)-Tf_i(y)|.
\end{align}
\end{definition}

To conclude this subsection, we state decomposition results for Young Measures regarding oscillation and concentration. Originally proven in the context of the sphere compactification, \citep{kristensen2019oscillation}{29}, we here extend them to general compactifications.

\begin{lemma}
Let $v_j\in L^1(\Omega,\mu)$ so that
\begin{align}
v_j \xrightarrow{Y(e_{f_i,i\in\N},\mu)} \big( \nu_x,\lambda,\nu_x^\infty \big).
\end{align}
We can write $v_j = o_j + c_j$, where $o_j\in L^1(\Omega,\mu)$ is equi-integrable,
\begin{align}
o_j \xrightarrow{Y(e_{f_i,i\in\N},\mu)} \big( \nu_x,0,N/A \big)
\end{align}
and $c_j \in L^1(\Omega,\mu)$ so that
\begin{align}
c_j \xrightarrow{Y(e_{f_i,i\in\N},\mu)} \big( \delta_0,\lambda,\nu_x^\infty \big).
\end{align}
The converse is also true, for each such sequence $o_j,c_j$ as above, their sum converges to the former Young Measure. 
\end{lemma}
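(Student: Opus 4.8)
The plan is to realise the splitting by a single truncation and then to push everything through the algebra $A$ of integrands, exploiting that both the oscillation $\nu_x$ and the concentration $\lambda$ are independent of the compactification (\refp{structure_comparison_Y}). For the forward direction I would import the decomposition already available on the sphere from \cite{kristensen2019oscillation}, in its truncation form: after passing to a subsequence (the $e_{f_i,i\in\N}$-limit of $v_j$ is unchanged along subsequences) there are radii $R_j\to\infty$ with $o_j:=v_j\1_{\{|v_j|\le R_j\}}$ equi-integrable, the existence of such $R_j$ being exactly a biting/Chacon argument (\refp{Appendix:Chacon}). Setting $c_j:=v_j\1_{\{|v_j|>R_j\}}$, the two pieces have disjoint supports and $\mu(\{c_j\ne 0\})\le\|v_j\|_{L^1}/R_j\to 0$.

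Disjointness of supports gives the pointwise identity $\Phi(x,v_j)=\Phi(x,o_j)+\Phi(x,c_j)-\Phi(x,0)$ for every $\Phi\in A$. Since $o_j=v_j$ off a set of vanishing measure, $\psi(o_j)-\psi(v_j)\to 0$ in $L^1(\mu)$ for $\psi\in C_0(\R^d)$, so $o_j$ carries the same oscillation $\nu_x$ as $v_j$; being equi-integrable it has no concentration, whence $\int_\Omega\Phi(x,o_j)\,d\mu\to\int_\Omega\langle\nu_x,\Phi(x,\cdot)\rangle\,d\mu$ by the equi-integrability criterion established just above. Subtracting this limit and $\int_\Omega\Phi(x,0)\,d\mu$ from the known limit of $\int_\Omega\Phi(x,v_j)\,d\mu$ gives $\int_\Omega\Phi(x,c_j)\,d\mu\to\int_\Omega\langle\delta_0,\Phi(x,\cdot)\rangle\,d\mu+\int_{\overline\Omega}\langle\nu_x^\infty,\Phi^\infty\rangle\,d\lambda$ for all $\Phi\in A$; since $(\xi_{c_j})$ is bounded in $A^*$ this pins down $c_j\xrightarrow{Y(e_{f_i,i\in\N},\mu)}(\delta_0,\lambda,\nu_x^\infty)$, and simultaneously $o_j\xrightarrow{Y(e_{f_i,i\in\N},\mu)}(\nu_x,0,N/A)$.

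For the converse I am given $o_j$ equi-integrable generating $(\nu_x,0,N/A)$ and $c_j$ generating $(\delta_0,\lambda,\nu_x^\infty)$, the latter forcing $c_j\to 0$ in measure (weak-$L^1$ convergence of $\psi(c_j)$ to the constant $\psi(0)$, tested against $\1$, upgrades to $L^1$-convergence, hence convergence in measure). By the Kantorovich/Lipschitz testing lemma it suffices to compute $\int_\Omega\Phi(x,o_j+c_j)\,d\mu$ against integrands $\Phi=\phi\otimes\psi$ with $\psi$ Lipschitz, for which $|\psi(z)-\psi(w)|\le C|z-w|$ via \refp{Lipschitz_pullback}. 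I would split $\Omega=\{|c_j|\le\eps\}\cup\{|c_j|>\eps\}$: on the first set $o_j+c_j$ differs from $o_j$ by at most $\eps$, so $\Phi(x,o_j+c_j)$ is within $C\eps$ of $\Phi(x,o_j)$; on the second set, of vanishing measure, equi-integrability of $o_j$ makes $\int|o_j|$ small there, so $\Phi(x,o_j+c_j)$ is $L^1$-close to $\Phi(x,c_j)$. Reassembling, using the separate Young limits of $o_j$ and $c_j$, then sending $j\to\infty$ and finally $\eps\to 0$, produces exactly $\int_\Omega\langle\nu_x,\Phi\rangle\,d\mu+\int_{\overline\Omega}\langle\nu_x^\infty,\Phi^\infty\rangle\,d\lambda$.

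The \emph{main obstacle} is the converse: because $\Phi$ is nonlinear the two sequences interact, and one must show this interaction is negligible even though $c_j$ does not vanish pointwise and $o_j$ need not be bounded. The decisive structural fact is that the pieces live on separated scales — $o_j$ is tight and equi-integrable while $c_j$ is large only on sets of vanishing measure — so on the small-$c_j$ region $c_j$ is a genuine $\eps$-perturbation and on the large-$c_j$ region $o_j$ contributes negligible mass. Converting these two qualitative statements into quantitative estimates on $\Phi$ is precisely what \refp{Lipschitz_pullback} and the reduction to Lipschitz test integrands provide, and the order of the double limit ($j$ first, $\eps$ second) must be respected for the error terms to close.
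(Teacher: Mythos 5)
Your forward direction is essentially the paper's own proof: the paper likewise truncates at levels $k_j\uparrow\infty$ chosen by a diagonal argument so that $o_j=v_j\chi_{\{|v_j|\le k_j\}}$ is equi-integrable and generates $(\nu_x,0,N/A)$, and then exploits the disjointness of the supports of $o_j$ and $c_j=v_j-o_j$ through the identity $\psi(c_j)-\psi(v_j)=\psi(0)-\psi(o_j)$, which is just your pointwise identity $\Phi(v_j)=\Phi(o_j)+\Phi(c_j)-\Phi(0)$ rearranged. (Calling the selection of the $R_j$ a Chacon/biting argument is a slight misnomer, it is a diagonal argument over truncation levels, but the substance is identical.) Where you genuinely diverge is the converse: the paper's written proof stops after computing the limit of $\psi(c_j)$ and never treats arbitrary pairs $o_j,c_j$ with the stated limits, for which the disjoint-support identity is unavailable, and the later joining result (\refp{join_sequence_Y}) cannot be invoked without circularity since its proof cites this very lemma. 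Your argument for the converse is sound: reduce to Lipschitz product integrands via \refp{Lipschitz_pullback}; extract $c_j\to 0$ in $\mu$-measure from the oscillation part $\delta_0$ (to make the upgrade from weak to strong convergence legitimate you should test with a specific $\psi\in C_0(\R^d)$ attaining a strict maximum at $0$, e.g.\ $\psi(z)=(1-|z|/\eps)^+$, rather than a generic one); then split $\Omega$ into $\{|c_j|\le\eps\}$, where $c_j$ is an $\eps$-perturbation of $0$, and $\{|c_j|>\eps\}$, where the uniform absolute continuity of the equi-integrable family $(o_j)$ kills the interaction term, with the limits taken in the order $j\to\infty$ first and $\eps\to 0$ second. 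This buys a complete proof of the "sum" direction that the paper leaves implicit, at the cost of a slightly longer argument; the paper's route buys brevity but only for the decomposition direction.
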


\begin{proof}
A standard diagonal argument gives us $k_j \uparrow \infty$ so that $o_j = v_j \chi_{|v_j|}\leq k_j$ is equi-integrable and generates $o_j \xrightarrow{Y(e_{f_i,i\in\N},\mu)} \big( \nu_x,0,N/A \big)$. Then, letting $c_j=v_j-o_j$, for $\eta\in C(\overline{\Omega})$, $T\psi\in C(e_{f_i,i\in\N})$ we have
\begin{align}
\int_\Omega \eta (\psi(c_j)-\psi(v_j)) = & \int_{|v_j|\leq k_j} \eta(\psi(0)-\psi(o_j)) + \int_{|v_j|>k_j} \eta(\psi(v_j)-\psi(v_j)) \\
= & \int_\Omega \eta(\psi(0)-\psi(o_j)) \to \int_\Omega \eta(\psi(0)-\langle \nu_x,\psi\rangle).
\end{align}
Writing $\psi(c_j) = \Big(\psi(c_j)-\psi(v_j)\Big) + \psi(v_j)$ and letting $j\to \infty$ we conclude.
\end{proof}

We remark here that, when considering certain subsets of $Y$ (for example Young Measures generated by gradients, see next section), $o_j$ and $c_j$ might generate different types of Young Measures.

The next lemma is an extension of the previous result.

\begin{lemma} \label{join_sequence_Y}
Let $v_j\in L^1(\mu)$ and $w_j\in L^1(\mu)$ generate
\begin{align}
v_j \xrightarrow{Y(\mu,e_{f_i,i\in\N})} \big( \delta_{v(x)},\lambda_\eta,\eta_x^\infty \big) \quad \text{ and } \quad w_j \xrightarrow{Y(\mu,e_{f_i,i\in\N})} \big( \nu_x,\lambda_\nu,\nu_x^\infty \big)
\end{align}
with $\lambda_\eta \perp \lambda_\nu$, for some $v\in L^1(\mu)$. Then the sum of the sequence generates
\begin{align}
v_j+w_j \xrightarrow{Y(\mu,e_{f_i,i\in\N})} \big( \delta_{v(x)} \ast \nu_x,\lambda_\nu + \lambda_\eta,k_x^\infty \big),
\end{align}
where
\begin{align}
k_x^\infty = \begin{cases}
\nu_x^\infty & \lambda_\nu \text{-a.e.} \\
\eta_x^\infty & \lambda_\eta \text{-a.e.}
\end{cases}
\end{align}
\end{lemma}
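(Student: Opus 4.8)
The plan is to identify the limiting Young measure by testing the sum against a determining class of integrands and reading off each contribution from the data of $v_j$ and $w_j$ separately. By the Kantorovich/Lipschitz density results established above, it suffices to compute $\lim_j\int_\Omega\phi(x)\psi(v_j(x)+w_j(x))\,d\mu$ for $\phi\in C(\overline\Omega)$ and $\psi\in Lip(\R^d)$ of linear growth with recession $\psi^\infty$ on $\partial e_{f_i,i\in\N}$; splitting $\psi$ into a part in $C_0(\R^d)$ and a part carrying the recession, I would treat oscillation and concentration in turn. Throughout I would use the preceding oscillation--concentration decomposition lemma to write $v_j=o_j^v+c_j^v$ and $w_j=o_j^w+c_j^w$, where $o_j^v\to v$ strongly in $L^1(\mu)$ (by the strong-convergence characterisation above, since the oscillation of $v_j$ is $\delta_{v(x)}$), $o_j^w$ is equi-integrable with oscillation $\nu_x$, and both $c_j^v,c_j^w\to 0$ in $\mu$-measure while carrying the concentrations $\lambda_\eta,\eta_x^\infty$ and $\lambda_\nu,\nu_x^\infty$ respectively.

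For the oscillation ($\psi\in C_0(\R^d)$), which by the lemma relating Young measures to weak $L^1$ limits sees no concentration, I would show $\psi(v_j+w_j)\rightharpoonup\langle\delta_{v(x)}\ast\nu_x,\psi\rangle$ in $L^1(\mu)$. Since $c_j^v\to0$ in measure and $\psi$ is bounded and uniformly continuous, $\psi(v_j+w_j)-\psi(o_j^v+w_j)\to0$ in $L^1$; and as $o_j^v\to v$ in measure the same reasoning replaces $o_j^v$ by $v$. It then remains to prove $\psi(v(x)+w_j(x))\rightharpoonup\int_{\R^d}\psi(v(x)+z)\,d\nu_x(z)$, i.e. the Young-measure action of $w_j$ against the Carathéodory integrand $(x,z)\mapsto\psi(v(x)+z)$. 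This I would justify by approximating $v\in L^1(\mu)$ by simple functions $v_n=\sum_k c_k\chi_{E_k}$: on each $E_k$ the integrand is $\psi(c_k+\cdot)\in C_0(\R^d)$, so the oscillation lemma applies on $E_k$, and uniform continuity of $\psi$ together with $v_n\to v$ in measure lets $n\to\infty$, exactly as in the approximation scheme of \refp{oscillation_different_measures}. This yields the oscillation $\delta_{v(x)}\ast\nu_x$ and, along the way, that the equi-integrable part $o_j^v+o_j^w$ generates $\big(\delta_{v(x)}\ast\nu_x,0,N/A\big)$.

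For the concentration I would test against $\psi$ of linear growth and establish
\begin{align}
\lim_j\int_\Omega\phi\,\psi(v_j+w_j)\,d\mu=\int_\Omega\phi\,\langle\delta_{v(x)}\ast\nu_x,\psi\rangle\,d\mu+\int_{\overline\Omega}\phi\,\langle\nu_x^\infty,\psi^\infty\rangle\,d\lambda_\nu+\int_{\overline\Omega}\phi\,\langle\eta_x^\infty,\psi^\infty\rangle\,d\lambda_\eta,
\end{align}
which is precisely the asserted triple once the two recession integrals are recombined as $\int\phi\langle k_x^\infty,\psi^\infty\rangle\,d(\lambda_\nu+\lambda_\eta)$ using $\lambda_\eta\perp\lambda_\nu$ and the definition of $k_x^\infty$. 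The mechanism is localisation: since $\lambda_\eta\perp\lambda_\nu$ on $\overline\Omega$, outer regularity furnishes, for each $\eps>0$, a cutoff $\rho\in C(\overline\Omega,[0,1])$ with $\int(1-\rho)\,d\lambda_\eta+\int\rho\,d\lambda_\nu<\eps$, so that $\lambda_\eta$ is essentially carried by $\{\rho=1\}$ and $\lambda_\nu$ by $\{\rho=0\}$. On the region weighted by $\phi\rho$ the sequence $w_j$ does not concentrate (its concentrating part $c_j^w$ is killed up to $\eps$), so using $\lvert\psi(a+b)-\psi(a)\rvert\le Lip(\psi)\lvert b\rvert$, the equi-integrability of $o_j^w$, and the $1$-homogeneity of $\psi^\infty$, the recession of $v_j+w_j$ there coincides with that of $v_j$ and contributes $\int\phi\langle\eta_x^\infty,\psi^\infty\rangle\,d\lambda_\eta$; symmetrically the $\phi(1-\rho)$ region contributes $\int\phi\langle\nu_x^\infty,\psi^\infty\rangle\,d\lambda_\nu$. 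Letting $\eps\to0$ closes the computation.

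The hard part will be this last localisation step: because $\psi$ is merely Lipschitz (no convexity or subadditivity), one cannot split $\psi(v_j+w_j)$ into $\psi(v_j)+\psi(w_j)$, and the genuine difficulty is to show that adding the \emph{non}-concentrating (but only $L^1$-bounded, possibly oscillating) sequence to the concentrating one perturbs the $1$-homogeneous recession only negligibly. I would control this through the decomposition $w_j=o_j^w+c_j^w$, estimating the equi-integrable contribution of $o_j^w$ to the blow-up by equi-integrability and the residual concentration of $c_j^w$ on $\{\rho\neq0\}$ by the smallness $\lambda_\nu(\{\rho\neq0\})<\eps$, so that on each localised region only one sequence's concentration survives in the limit. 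This is the analogue, in the sum setting, of the localisation already used in \refp{structure_comparison_Y}.
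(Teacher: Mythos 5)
Your proposal is correct and follows essentially the same route as the paper's proof: decompose via the preceding oscillation--concentration lemma, exploit $\lambda_\eta\perp\lambda_\nu$ through a continuous cutoff $\rho$ adapted to compact/open neighbourhoods of the two supports, and control the cross terms by Lipschitz bounds together with equi-integrability and convergence in $\mu$-measure. The only notable difference is one of bookkeeping (the paper reduces at once to showing that $b_j+c_j$ with $b_j=v_j-v$ generates the pure concentration triple and then recombines, while you also spell out the simple-function approximation giving the convolution $\delta_{v(x)}\ast\nu_x$, a step the paper leaves implicit).
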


\begin{proof}
Write $w_j = o_j + c_j$ as in the previous lemma and put $b_j = v_j-v$. We claim that
\begin{align}
b_j+c_j \xrightarrow{Y(\mu,e_{f_i,i\in\N})} \big( \delta_0,\lambda_\nu + \lambda_\eta,k_x^\infty \big).
\end{align}
No oscillation is a consequence of the fact that $b_j+c_j\to 0$ in $\mu$-measure. Next, let $\phi\in C(\overline{\Omega}),\|\phi\|_{Lip}\leq 1$ and $\Psi\in e_{f_i,i\in\N},\|T\Phi\|_{Lip}\leq 1$ with $\Psi(0)=0$. Let $E_\nu$ and $E_\eta$ be sets where $\lambda_\nu$ and $\lambda_\eta$ are concentrated, respectively. For $\eps>0$ find $C_\nu\subset E_\nu, C_\eta\subset E_\eta$ compact sets and $O_\nu \supset C_\nu,O_\eta\supset C_\eta$ open sets such that
\begin{align}
\lambda_\eta(\overline{\Omega}\setminus C_\eta) + \lambda_\nu(O_\eta) + \lambda_\nu(\overline{\Omega}\setminus C_\nu) + \lambda_\eta(O_\nu) <\eps.
\end{align}
Consider a function $\rho\in C(\R^n)$ with $\chi_{C_\eta} \leq \rho \leq \chi_{O_\eta}$. We write
\begin{align}
\int_\Omega \phi \big( \Psi(b_j+c_j)- \Psi(b_j)-\Psi(c_j) \big) (\overbrace{\rho}^{=I} + \overbrace{1-\rho}^{II})d\mu.
\end{align}
We estimate the first guy by
\begin{align}
\limsup_j |I| \leq \limsup_j 2\int_\Omega \rho |b_j| = 2\int_{\overline{\Omega}} \rho d\lambda_\nu \leq 2\lambda_\nu(\overline{\Omega}\cap O_\eta) \leq 2\eps.
\end{align}
Similarly for the second term
\begin{align}
\limsup_j |II| \leq \limsup_j 2 \int_\Omega (1-\rho) |c_j| \leq 2 \lambda_\eta(\overline{\Omega}\setminus C_\eta) \leq 2\eps.
\end{align}
But the first term converges to $0$ and therefore we conclude for the representation of Young Measures.
\end{proof}

\subsection{Terminology}

We dedicate this part to clarifying the terminology of Young Measures adopted throughout this paper.

\begin{definition}
Given a separable algebra $A$ of $\G_p$ that separates points from closed sets, a \textit{p-Young measure} is a triple
\begin{align}
\nu = \big( (\nu_x)_{x\in \Omega},\lambda,(\nu_x^\infty)_{x\in \overline{\Omega}} \big)
\end{align}
where
\begin{enumerate}
\item $(\nu_x)_{x\in \Omega}$ is $\mu$-measurable and $\nu_x\in \M^+_1(\R^d)$ $\mu$-a.e. $x\in \Omega$.

We call it the \textit{oscillation Young measure};

\item $\lambda\in \M^+(\overline{\Omega})$.

We call it the \textit{concentration measure};

\item $(\nu_x^\infty)_{x\in \overline{\Omega}}$ is $\lambda$-measurable and $\nu_x^\infty \in \M^+_1(\partial e_A )$ $\lambda$-a.e. $x\in \overline{\Omega}$.  
We call it the \textit{concentration angle Young measure};

\item the moment condition
\begin{align}
\int_\Omega \int_{\R^d} |z|^p d\nu_x(z) < \infty
\end{align}
must hold.
\end{enumerate}
The collection of all such triples is denoted by $Y^p = Y^p(\Omega,\mu,e_A)$.
\end{definition}
Where obvious from the context, we will not specify the domain $\Omega$, the measure $\mu$, the family $A$ or the target space $\R^d$. Also, when $\lambda=0$, i.e. when there is no concentration, there is no point in specifying the compactification we are working with.

From every triple $\nu = \big( \nu_x,\lambda,\nu_x^\infty \big)$ one can construct the measure $L\in C(e_F \Omega\times \R^d)$ and vice-versa.
\begin{lemma}
The following equality holds:
\begin{align}
Y^p = T^*\left\{ L\in \M^+(e_F \Omega\times \B^d)^+: \int_{e_F \Omega\times \B^d} \phi(x)(1-|\hat{z}|)^p dL = \int_\Omega \phi(x)d\mu(x) \ \forall \phi\in C(e_G\Omega) \right\}.
\end{align}
In particular $Y^p$ is a weak* closed and convex subset of $A^*$.
\end{lemma}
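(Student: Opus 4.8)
The plan is to exhibit a bijection between the constraint set on the right-hand side, which I denote
\begin{align}
\mathcal{L} := \left\{ L\in \M^+(e_F\Omega\times\B^d) : \int_{e_F\Omega\times\B^d}\phi(x)(1-|\hat z|)^p\,dL = \int_\Omega\phi\,d\mu \ \ \forall\,\phi\in C(\overline{\Omega}) \right\},
\end{align}
and the triple set $Y^p$, realised through the two constructions already present in the subsection on representation of Young measures: the decomposition map $L\mapsto(\nu_x,\lambda,\nu_x^\infty)$, and its inverse, the synthesis of $L$ from a triple via the representation formula. Since $T$ is an isometric isomorphism of $A$ onto $C(e_F\Omega\times\B^d)$, its adjoint $T^*$ is an isometric isomorphism that is moreover a weak*-homeomorphism; hence it suffices to prove that $\mathcal{L}$ and $Y^p$ correspond under $L\leftrightarrow T^*L$, and separately that $\mathcal{L}$ is weak* closed and convex.

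First I would treat the inclusion $T^*\mathcal{L}\subseteq Y^p$. Given $L\in\mathcal{L}$, I run verbatim the disintegration $\tilde\lambda=\pi_\#L$, $L=\tilde\nu_x\,d\tilde\lambda$ (by \refp{s4:disintegration_theorem}) and the Radon--Nikodym splitting $\tilde\lambda=\frac{\tilde\lambda}{\mu}\,d\mu+\tilde\lambda^s$ carried out earlier. The defining constraint is exactly the identity $\langle\tilde\nu_x,(1-|\cdot|)^p\rangle\,\tilde\lambda\mres\Omega=\mu$ used in that construction; it forces $\langle\nu_x,1\rangle=1$ $\mu$-a.e.\ (condition 1), produces $\lambda=\tilde\nu_x(\partial e_{G'}\B^d)\tilde\lambda\in\M^+(\overline{\Omega})$ (condition 2), and the normalised angle measure $\nu_x^\infty\in\M^+_1(\partial e_{G'}\B^d)$ (condition 3). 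For the moment condition (condition 4) I would compute $\langle\nu_x,|\cdot|^p\rangle=\frac{\tilde\lambda}{\mu}(x)\langle\tilde\nu_x,|\cdot|^p\rangle$ and integrate; since $|z|\le1$ on $\B^d$ this is bounded by $\int_\Omega\frac{\tilde\lambda}{\mu}\,d\mu\le\tilde\lambda(\Omega)<\infty$, so the moment is automatically finite.

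For the reverse inclusion $Y^p\subseteq T^*\mathcal{L}$, given a triple $(\nu_x,\lambda,\nu_x^\infty)\in Y^p$ I would define $L$ as the positive linear functional
\begin{align}
T\Phi\longmapsto \int_\Omega\langle\nu_x,\Phi(x,\cdot)\rangle\,d\mu + \int_{\overline{\Omega}}\langle\nu_x^\infty,\Phi^\infty(x,\cdot)\rangle\,d\lambda
\end{align}
on $C(e_F\Omega\times\B^d)$. Linearity and positivity are immediate (if $T\Phi\ge0$ then $\Phi\ge0$ and $\Phi^\infty\ge0$), and finiteness is equivalent to the moment condition through $\langle L,1\rangle=\int_\Omega\langle\nu_x,(1+|\cdot|)^p\rangle\,d\mu+\lambda(\overline{\Omega})$; Riesz--Markov then yields $L\in\M^+(e_F\Omega\times\B^d)$. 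Testing against $\Phi$ constant in $z$, so that $T\Phi=\phi(x)(1-|\hat z|)^p$ and $\Phi^\infty\equiv0$ (because $(1-|\hat z|)^p$ vanishes on the boundary $|\hat z|=1$), recovers $\langle L,\phi(x)(1-|\hat z|)^p\rangle=\int_\Omega\phi\,d\mu$, i.e.\ $L\in\mathcal{L}$. Uniqueness in the disintegration theorem together with uniqueness in Riesz--Markov shows the decomposition of this $L$ returns the original triple, so the two maps are mutually inverse and $Y^p=T^*\mathcal{L}$.

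Finally, for the structural claim I would argue on $\mathcal{L}$ and transport by $T^*$. The set $\mathcal{L}$ is the intersection of the weak* closed positive cone of $\M(e_F\Omega\times\B^d)$ with the affine subspace cut out by the equalities $\langle L,\phi\otimes(1-|\hat z|)^p\rangle=\int_\Omega\phi\,d\mu$; here $\phi\otimes(1-|\hat z|)^p\in C(e_F\Omega\times\B^d)$ by \refp{productCompact}, so each equality is the level set of a fixed weak* continuous functional, hence weak* closed, and $\mathcal{L}$ is visibly convex. As $T^*$ is linear and a weak*-homeomorphism, $Y^p=T^*\mathcal{L}$ is weak* closed and convex in $A^*$. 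The main obstacle I anticipate is the reverse direction: verifying that the functional above is a genuine finite positive Radon measure and that, via the uniqueness statements, it really inverts the decomposition. In particular the delicate point is recognising that the moment condition (condition 4) is precisely what guarantees finiteness of $L$, rather than being an extraneous hypothesis, and that the factor $(1-|\hat z|)^p$ cleanly separates the oscillation part on $\Omega$ from the concentration part on $\overline{\Omega}$.
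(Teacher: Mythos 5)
Your proposal is correct and follows essentially the same route as the paper: the forward inclusion is delegated to the disintegration construction already carried out in the representation subsection, the reverse inclusion synthesises $L = \nu_x\,d\mu + \nu_x^\infty\,d\lambda$ and tests against functions of $x$ alone, and closedness/convexity are read off from the constraint set being an intersection of the positive cone with weak* closed affine conditions, transported by $T^*$. You supply more detail than the paper (positivity and finiteness of $L$ via the moment condition, the Riesz--Markov step, the explicit weak*-homeomorphism argument), but the underlying argument is the same.
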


\begin{proof}
Let $L\in \M^+(e_F \Omega\times \R^d)^+$ as above. It was already shown that $T^*L \in Y^p$. On the other side, if $\nu = \big( \nu_x,\lambda,\nu_x^\infty\big)$, we let
\begin{align}
L = \nu_x d\mu + \nu_x^\infty d\lambda.
\end{align}
Testing against a test function $\phi=\phi(x)$ that only depends on $x$,
\begin{align}
\int_{e_F\Omega\times\B^d} T\phi dL = \int_{e_F \Omega\times \B^d} \phi(x)(1-|\hat{z}|)^p dL = \int_\Omega \phi d\mu.
\end{align}
Conclude by noticing that the above characterisation amounts to
\begin{align}
Y^p = T^* \left( \bigcup_{\phi \in C(e_F\Omega)} \left\{ L\in \M^+(e_F \Omega\times\B^d)^+ : \int_{e_F \Omega\times \B^d} \phi(x)(1-\hat{z})^p dL = \int_\Omega \phi d\mu \right\} \right).
\end{align}
\end{proof}

\begin{remark}
With a straightforward adaptation of a classical argument for the sphere compactification, one can prove that given any Young Measure of the above form $\nu \in Y(\Omega,\mu,e_{f_i,i\in \N})$ with $\supp(\mu\mres \overline{\Omega}) = \overline{\Omega}$ and $\mu$ non-atomic, then there is a sequence of smooth functions $u_j\in \D(\Omega,\R^d)$ such that
\begin{align}
u_j \xrightarrow{Y(\Omega,\mu,e_{f_i,i\in\N})} \nu.
\end{align}
We do not transcribe the proof here because it won't be used at any point in this work.
\end{remark}

We now give a formal definition of what elementary Young Measures are, as a way to embed functions and measures.

\begin{definition}
Let $\mu\in \M^+(\Omega)$ and $v\in L^p(\Omega,\mu,\R^d), 1\leq p\leq \infty$. The corresponding \textit{elementary p-Young measure} is
\begin{align}
\xi_v := \big( (\delta_{v(x)})_{x\in \Omega},0,\text{N/A} \big) \in Y(\mu).
\end{align}
When $p=1$, we extend the definition to $l \in \M(\Omega,\R^d)$ by setting, for $l = \frac{l}{\mu} d\mu + l^{s,\mu}$,
\begin{align}
\xi_l := \left( \left( \delta_{\frac{l(x)}{\mu(x)}} \right)_{x\in \Omega},|l^{s,\mu}|, \left( \delta_\frac{l^{s,\mu}}{|l^{s,\mu}|} \right)_{x\in \Omega} \right) \in Y(\mu,\partial \B^d)
\end{align}
\end{definition}
Note that for $\Phi\in \E_p$ we have
\begin{align}
\ll \xi_v,\Phi \gg = & \int_\Omega \Phi(x,v(x))d\nu(x) \quad \text{ and } \\
\ll \xi_l,\Phi \gg = & \int_\Omega \Phi\left( x,\frac{l}{\mu}(x) \right) d\mu(x) + \int_\Omega \Phi^\infty\left( x,\frac{l^{s,\mu}}{|l^{s,\mu}|}(x) \right) d|l^{s,\mu}|(x).
\end{align}

In general, there is no clear way of defining elementary Young Measures on compactifications that are larger than the sphere. We will see later, however, that this can be done in very specific cases when we have more structure on $A$ and more information on the measure $l\in \M(\Omega,\R^d)$ we are trying to embed.

\begin{definition}[Barycentre of a p-Young measure]
Let $\nu = \big( (\nu_x)_{x\in \Omega}, \lambda, (\nu_x^\infty)_{x\in \overline{\Omega}} \big) \in Y^p(\mu,A)$, where $A$ is so that $e_A \geq \overline{\B^d}$ (in the sense of compactifications, see \refp{relation_compactifications}). We call its barycentre
\begin{align}
\overline{\nu} = \begin{cases}
\overline{\nu_x} \quad & 1<p<\infty \\
\overline{\nu_x} \mu + \overline{\nu^\infty_x} \lambda & p=1,
\end{cases}
\end{align}
which is the following quantity
\begin{align}
\overline{\nu_x} = & \int_{\R^d} z d\nu_x(z) \\
\overline{\nu^\infty_x} = & \int_{\partial e_A} zd\nu_x^\infty.
\end{align}
\end{definition}
In the above definition, "$\geq$" is the ordering over the set of Hausdorff compactifications of a topological space (see the subsection on Hausdorff compactifications). Moreover, the barycentre does not depend on the compactification, as far as $e_A \geq \overline{\B^d}$. Indeed $z = [z^j]_{j=1,\ldots,d}$ extended to $e_A$ coordinate-wise, and so
\begin{align}
\int_{\partial e_A} zd\nu_x^\infty = \int_{\partial \B^d} \int_{\{ (w_n)_n \}} zd P_z( (w_n)_n ) d\pi_{\partial \B^d} \nu_x^\infty = \int_{\partial \B^d} z d\pi_{\partial \B^d} \nu_x^\infty,
\end{align}
as the coordinate map $z\mapsto z^j$ is constant on sequences $(w_n)_n$ that converge to the same value $w\in \partial\B^d$.

Notice that $x\mapsto \overline{\nu_x}$ is $\mu$-measurable and $x\mapsto \overline{\nu^\infty_x}$ is $\lambda$-measurable. In particular,
\begin{align}
\overline{\nu} \in L^p(\Omega,\mu,\R^d) \text{ for } 1<p<\infty \quad \text{ and } \quad \overline{\nu}\in \M(\Omega,\R^d) \text{ for } p=1.
\end{align}
It is easy to see that when $1<p<\infty$, $v_j\rightharpoonup v\in L^p(\Omega,\mu,\R^d)$ we have $v=\overline{\nu_v}$, and when $p=1$, $\rho_j \rightharpoonup^* \rho \in C_0(\Omega,\R^d)^*$, then $\rho = \overline{\xi_\rho}$.

\subsection{Stronger notions of convergence}

To conclude the discussion about generalised Young Measures, we mention some stronger notions of convergence such as strict convergence and $\mu$-strict convergence. These modes of convergence explain why we chose such canonical embedding for measures in the previous paragraph. Moreover, we give a simple example of why such canonical embedding has no meaning when the compactification is larger than the sphere one.

\begin{definition}
Let $\eta_j,\eta\in \M(\Omega,\R^d)$, we say that $\eta_j \xrightarrow{s} \eta$ ($\eta_j$ converges strictly to $\eta$) if $\eta_j\to \eta$ weakly* in $C_0(\Omega,\R^d)^*$ and $|\eta_j|(\Omega)\to |\eta|(\Omega)$.
\end{definition}

It is easy to see that if $\eta_j\to \eta$ strictly, then $|\eta_j|\rightharpoonup^* |\eta|$ in $C_0(\Omega,\R^d)^*$. The above convergence prevents small-scale cancellations and concentration on the boundary. However, it does not prevent oscillation. To prevent oscillation, we must choose a "weight" $\mu\in \M^+(\Omega)$ and ask for convergence of $\eta_j$ on the graph $(\mu,\eta_j)$. We thus obtain a notion of $\mu$-strict convergence.

\begin{definition}
We say that $\eta_j\xrightarrow{\mu-s} \eta$ ($\eta_j$ converges $\mu$-strictly to $\eta$) if $\eta_j\to \eta$ weakly* in $C_0(\Omega,\R^d)^*$ and $(\mu,\eta_j)\xrightarrow{s} (\mu,\eta)$ in $C_0(\Omega,\R \times \R^d)^*$.
\end{definition}
Similarly to what was observed in the case of strict convergence, such a notion implies that $|(\mu,\eta_j)|\rightharpoonup^* |(\mu,\eta_j)|$ in $C_0(\Omega)^*$. Moreover, $\mu$-strict convergence of $\eta_j$ to $\eta$ simply amounts to weak* convergence and additional convergence of the following quantity: writing $\eta_j = \frac{\eta_j}{\mu} d\mu + \eta_j^{s,\mu}, \eta_j^{s,\mu} \perp \mu$,
\begin{align}
|(\eta_j,\mu)|(\Omega) = & \left| \left(\frac{\eta_j}{\mu} d\mu,\mu \right) + (\eta_j^{s,\mu},0 ) \right| \\
= & \int_\Omega \sqrt{1+ \left| \frac{\eta_j}{\mu} \right|^2} d\mu + |\eta_j^{s,\mu}|(\Omega) \to \int_\Omega \sqrt{1+ \left| \frac{\eta}{\mu} \right|^2} d\mu + |\eta^{s,\mu}|(\Omega) = |(\eta,\mu)|(\Omega).
\end{align}

When $\mu=\L^n$, we refer to such convergence as \textit{area-strict convergence}, in analogy with the area formula for smooth functions.

Reshetnyak continuity theorem (see \cite{reshetnyak1968weak} for the original) shows that strict convergence is equivalent to the convergence of 1-homogeneous functionals.

\begin{theorem}
Let $f(x,z) \in C(\Omega\times \R^d)$ be 1-homogeneous in $z$. If $\eta_j\to \eta$ strictly in the sense of measures, then
\begin{equation}
\int_\Omega f\left( x,\frac{\eta_j}{|\eta_j|} \right) d|\eta_j|(x) \to \int_\Omega f\left( x,\frac{\eta}{|\eta|} \right) d|\eta|(x) 
\end{equation}
\end{theorem}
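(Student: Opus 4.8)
The plan is to lift the problem to a sequence of positive measures on the compact product $\overline{\Omega}\times\partial\B^d$ (where $\partial\B^d$ is the unit sphere) and to prove weak* convergence there, after which the statement follows by testing against $f$. First I would record the two structural facts I will use. By the polar (Radon--Nikodym) decomposition of a vector measure, $\eta = \frac{d\eta}{d|\eta|}|\eta|$ with $\left|\frac{d\eta}{d|\eta|}\right| = 1$ for $|\eta|$-a.e.\ $x$, so the $1$-homogeneity of $f$ makes $\int_\Omega f\!\left(x,\frac{\eta}{|\eta|}\right)d|\eta|$ well defined; and, as already noted above, strict convergence $\eta_j\to\eta$ forces $|\eta_j|\rightharpoonup^* |\eta|$ in $\M^+(\overline{\Omega})$. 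I would then define $\sigma_j\in\M^+(\overline{\Omega}\times\partial\B^d)$ by $\int h\,d\sigma_j := \int_\Omega h\!\left(x,\frac{\eta_j}{|\eta_j|}\right)d|\eta_j|$ for $h\in C(\overline{\Omega}\times\partial\B^d)$, and the analogous $\sigma$ built from $\eta$. Since $\sigma_j(\overline{\Omega}\times\partial\B^d) = |\eta_j|(\Omega)$ is bounded and $C(\overline{\Omega}\times\partial\B^d)$ is separable (compact metric domain), \refp{Appendix:Banach-Alaoglu} yields a weak* convergent subsequence, still denoted $\sigma_j$, with $\sigma_j\rightharpoonup^*\Sigma$.

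The core of the argument is to show $\Sigma = \sigma$, independently of the subsequence, by identifying $\Sigma$ through its disintegration. Testing against $h=h(x)$ and using $|\eta_j|\rightharpoonup^*|\eta|$ shows the projection $\pi_\#\Sigma = |\eta|$, so the disintegration theorem \refp{s4:disintegration_theorem} provides a $|\eta|$-measurable family $(\Sigma_x)_x\subset\M^+_1(\partial\B^d)$ with $\Sigma = \Sigma_x\,d|\eta|$. Next, testing against $h(x,\theta)=\phi(x)\theta_k$ and using the weak* convergence $\eta_j\rightharpoonup^*\eta$ gives $\int_\Omega\phi\,\langle\Sigma_x,\theta_k\rangle\,d|\eta| = \int_\Omega\phi\,d\eta_k$, so the barycentre of $\Sigma_x$ equals $\frac{d\eta}{d|\eta|}(x)$ for $|\eta|$-a.e.\ $x$.

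Then comes the rigidity step, which I expect to be the main obstacle. For $|\eta|$-a.e.\ $x$ the fibre $\Sigma_x$ is a probability measure on the unit sphere whose barycentre $b(x)=\int\theta\,d\Sigma_x$ satisfies $|b(x)| = \left|\frac{d\eta}{d|\eta|}(x)\right| = 1$. Since $|b(x)| = \left|\int\theta\,d\Sigma_x\right|\leq\int|\theta|\,d\Sigma_x = 1$ with equality, strict convexity of the Euclidean ball forces all the mass of $\Sigma_x$ to align, i.e.\ $\Sigma_x = \delta_{b(x)} = \delta_{\frac{d\eta}{d|\eta|}(x)}$, whence $\Sigma = \sigma$. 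Because every subsequence admits a further subsequence with this same, uniquely determined limit, the full sequence satisfies $\sigma_j\rightharpoonup^*\sigma$; testing against $h = f|_{\overline{\Omega}\times\partial\B^d}$ then gives exactly $\int_\Omega f\!\left(x,\frac{\eta_j}{|\eta_j|}\right)d|\eta_j|\to\int_\Omega f\!\left(x,\frac{\eta}{|\eta|}\right)d|\eta|$. Besides the rigidity step, the only other delicate point is the boundary behaviour of $f$: one must ensure that its restriction to the sphere extends to a bounded continuous function on the compact product $\overline{\Omega}\times\partial\B^d$, which is where the continuity hypothesis, together with control of $f$ up to $\partial\Omega$, is used.
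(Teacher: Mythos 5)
The paper does not actually prove this statement: it is quoted as Reshetnyak's continuity theorem with a pointer to \cite{reshetnyak1968weak}, and only the subsequent corollary on $\mu$-strict convergence is argued. Your proposal is the standard modern proof of the theorem and it is correct. Lifting $|\eta_j|$ to $\sigma_j=(\mathrm{id},\tfrac{d\eta_j}{d|\eta_j|})_{\#}|\eta_j|$ on the compact product $\overline{\Omega}\times\partial\B^d$, extracting a weak* limit $\Sigma$, identifying $\pi_\#\Sigma=|\eta|$ (this is exactly where strict convergence enters, via $|\eta_j|\rightharpoonup^*|\eta|$), disintegrating by \thref{s4:disintegration_theorem}, reading off the fibre barycentres from $\eta_j\rightharpoonup^*\eta$, and then using the Cauchy--Schwarz equality case ($1=|b(x)|^2=\int b(x)\cdot\theta\,d\Sigma_x\le 1$ forces $\theta=b(x)$ for $\Sigma_x$-a.e.\ $\theta$) to get $\Sigma_x=\delta_{\frac{d\eta}{d|\eta|}(x)}$ is precisely the argument one finds in the literature; the subsequence principle then upgrades to convergence of the full sequence. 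Two points are worth making explicit rather than leaving as remarks. First, as you note, the hypothesis $f\in C(\Omega\times\R^d)$ as printed is too weak: to test $\Sigma$ against $f$ one needs $f$ to be (boundedly) continuous on $\overline{\Omega}\times\partial\B^d$, which is how the theorem is used later in the paper and how it should be read. Second, the projection step uses $|\eta_j|\rightharpoonup^*|\eta|$ against all of $C(\overline{\Omega})$, not merely $C_0(\Omega)$; this follows from lower semicontinuity of the total variation on open sets combined with $|\eta_j|(\Omega)\to|\eta|(\Omega)$, the fact the paper alludes to right after defining strict convergence, and it is worth a line in a complete write-up since it is the only place where mass could otherwise leak to $\partial\Omega$.
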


In case $f$ is not 1-homogeneous but has an extension on the sphere compactification, we can obtain the following auto-convergence result by requiring $\mu$-strict convergence instead.

\begin{corollary}
Let $f\in \E(\Omega\times \R^d )$. If $\eta_j\xrightarrow{\mu-s} \eta$ in $C_0(\Omega,\R^d)^*$ then
\begin{equation}
\int_\Omega f\left( x,\frac{\eta_j}{\mu} \right)d\mu + f^\infty \left( x,\frac{\eta_j^{s,\mu}}{|\eta_j^{s,\mu}|} \right) d|\eta_j^{s,\mu}| \to \int_\Omega f \left( x,\frac{\eta}{\mu} \right) d\mu + f^\infty \left( x,\frac{\eta^{s,\mu}}{|\eta^{s,\mu}|} \right) d|\eta^{s,\mu}|.
\end{equation}
\end{corollary}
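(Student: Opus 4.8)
The plan is to deduce the statement from the Reshetnyak continuity theorem stated just above, by lifting $f$ to a positively $1$-homogeneous integrand on an augmented target space and applying that theorem to the graph measures.

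First I would pass to the graph measures $\gamma_j := (\mu,\eta_j)$ and $\gamma := (\mu,\eta)$, viewed as elements of $\M(\Omega,\R^{1+d})$. By the definition of $\mu$-strict convergence, $\eta_j\xrightarrow{\mu-s}\eta$ says exactly that $\gamma_j\to\gamma$ weakly* together with $|\gamma_j|(\Omega)\to|\gamma|(\Omega)$, i.e. that $\gamma_j\xrightarrow{s}\gamma$ strictly in $\M(\Omega,\R^{1+d})$. Thus the hypothesis of the preceding theorem is in force, provided I can feed it a continuous $1$-homogeneous integrand on $\Omega\times\R^{1+d}$ whose evaluation against $\gamma_j$ reproduces the left-hand side.

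Next I would construct that integrand. Define $g\colon\Omega\times\R^{1+d}\to\R$, positively $1$-homogeneous in $(s,z)$, by
\begin{align}
g(x,s,z) = \begin{cases} s\, f\!\left(x,\tfrac{z}{s}\right) & s>0,\\[2pt] f^\infty(x,z) & s=0. \end{cases}
\end{align}
Since only directions with $s\ge0$ appear in the polar of a measure whose first component is the positive measure $\mu$, it suffices that $g$ be continuous on $\{s\ge0\}$ (extend it arbitrarily but continuously to $s<0$ if one wants the literal hypothesis). The one point where continuity is not automatic is the horizon $\{s=0\}$: writing $t=1/s$ one has $s\,f(x,z/s)=f(x,tz)/t$, and $g$ is continuous across $s=0$ precisely because $f\in\E$, i.e. because $f(x,tz)/t\to f^\infty(x,z)$ \emph{locally uniformly} in $(x,z)$ as $t\to\infty$. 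This is the step I expect to be the crux, and it is exactly what the regular-recession hypothesis supplies: the local uniformity in the definition of $\E$ is what upgrades the pointwise recession limit to genuine continuity of $g$ at $s=0$. I would also note that the linear growth of $f$ makes $f^\infty$ finite, which keeps $g$ bounded on the (compact) unit sphere of $\R^{1+d}$, so the preceding theorem genuinely applies.

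Finally I would verify that testing $g$ against $\gamma_j$ in Reshetnyak form recovers the left-hand side. Decomposing $\eta_j=\frac{\eta_j}{\mu}\,\mu+\eta_j^{s,\mu}$ with $\eta_j^{s,\mu}\perp\mu$, the two pieces of $\gamma_j=(1,\tfrac{\eta_j}{\mu})\,\mu+(0,\eta_j^{s,\mu})$ are mutually singular, so
\begin{align}
|\gamma_j| = \sqrt{1+\left|\tfrac{\eta_j}{\mu}\right|^2}\,\mu + |\eta_j^{s,\mu}|,
\end{align}
and the polar $\tfrac{d\gamma_j}{d|\gamma_j|}$ equals $(1,\tfrac{\eta_j}{\mu})/\sqrt{1+|\tfrac{\eta_j}{\mu}|^2}$ on the $\mu$-part and $(0,\tfrac{\eta_j^{s,\mu}}{|\eta_j^{s,\mu}|})$ on the singular part. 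Using the $1$-homogeneity of $g$ together with $g(x,1,w)=f(x,w)$ and $g(x,0,v)=f^\infty(x,v)$, I obtain
\begin{align}
\int_\Omega g\!\left(x,\tfrac{d\gamma_j}{d|\gamma_j|}\right)d|\gamma_j| = \int_\Omega f\!\left(x,\tfrac{\eta_j}{\mu}\right)d\mu + \int_\Omega f^\infty\!\left(x,\tfrac{\eta_j^{s,\mu}}{|\eta_j^{s,\mu}|}\right)d|\eta_j^{s,\mu}|,
\end{align}
which is exactly the left-hand side, and the identical computation for $\gamma$ gives the right-hand side. Applying the Reshetnyak theorem to $g$ and $\gamma_j\xrightarrow{s}\gamma$ then yields convergence of the left-hand side to the right-hand side. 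Apart from the continuity of $g$ at the horizon, the remaining points are routine bookkeeping: that the polar computation uses only $\mu\ge0$, and that the slice-values $g(x,1,\cdot)=f$ and $g(x,0,\cdot)=f^\infty$ are the correct ones.
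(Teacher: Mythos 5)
Your proposal is correct and follows essentially the same route as the paper: the paper likewise introduces the perspective functional $\tilde f(x,z,t)=|t|\,f(x,z/t)$ (equal to $f^\infty(x,z)$ at $t=0$), notes it is positively $1$-homogeneous, and applies the Reshetnyak continuity theorem to the strictly convergent graph measures $(\eta_j,\mu)$. Your explicit verification of continuity at the horizon via the local uniformity in the definition of $\E$, and of the polar/decomposition computation, merely fills in details the paper leaves implicit.
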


These are well-known results, but we write down a proof of the latter one because it gives an insight into how to move from one type of convergence to the other.

\begin{proof}
Consider the so-called \textit{perspective functional}
\begin{equation}
\tilde{f}(x,z,t) = \begin{cases}
f(x,\frac{z}{t})|t| \quad & t\neq 0 \\
f^\infty(x,z) & t=0
\end{cases}
\end{equation}
which is positively 1-homogeneous in the last variable. By the Reshetnyak continuity theorem, we know that, for $\eta\in \M(\Omega,\R^d)$,
\begin{equation}
\int_\Omega \tilde{f}(x,(\eta,\mu)) = \int_\Omega f \left( x,\frac{\eta}{d\mu} \right) d\mu + f^\infty \left( x,\frac{\eta^{s,\mu}}{|\eta^{s,\mu}|} \right) d|\eta^{s,\mu}|
\end{equation}
is sequentially continuous in the $\mu$-strict topology.
\end{proof}

Upon taking $f(x,z)=|z|$ we get that $\mu$-strict convergence implies strict convergence, for every $\mu\in \M^+(\Omega)$.

In light of these results, for a fixed measure $\mu\in \M^+(\Omega)$, the canonical embedding of measures $\eta\in \M(\Omega,\R^d)$ into the set of Young Measures on the sphere
\begin{align}
\eta\in \M(\Omega,\R^d) \mapsto \xi_\eta = \left( \delta_\frac{\eta}{\mu},|\eta^{s,\mu}|, \delta_{\frac{\eta^{s,\mu}}{|\eta^{s,\mu}|}} \right)
\end{align}
is sequentially $\mu$-strictly continuous. This is a solid justification for this choice of embedding. For the same reason, we can show why on larger compactifications we don't have, in general, a canonical choice.

\begin{theorem} \label{counter_example_embedding_YM}
Let $\mu\in \M^+(\Omega)$ with the property that there is $x\in supp(\mu\mres \overline{\Omega})$ with $\delta_x\perp \mu$, and let $f\in C(\R^d)$ of linear growth, $f\not\in \E_1(\R^d)$ (oscillate at infinity). There is $(u_j)_{j\in \N} \subset \D(\Omega,\R^d), u_j \xrightarrow{\mu-strictly} z\delta_x$ in $\M(\overline{\Omega},\R^d)$ for some $z\in \partial\B^d$, but
\begin{align}
\int_\Omega f(u_j(x))d\mu(x) \text{ does not converge}.
\end{align}
\end{theorem}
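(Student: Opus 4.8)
The plan is to build $(u_j)$ by concentrating a fixed amount of mass, all pointing in one direction $z$, into shrinking neighbourhoods of $x$, and to tune the \emph{amplitude} of this concentration along the scales at which $f$ oscillates. Throughout I identify $u_j\in\D(\Omega,\R^d)$ with the induced measure $u_j\,d\mu\in\M(\overline\Omega,\R^d)$, as is done for elementary Young measures.

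\textbf{Step 1: extracting the oscillation data.} First I would use $f\notin\E_1(\R^d)$ to fix a unit vector $z\in\partial\B^d$ for which $F(s):=f(sz)/s$ does not converge as $s\to\infty$; linear growth gives $|F(s)|\le C(1+s)/s\le 2C$ for $s\ge 1$, so $F$ is bounded and
\begin{align}
a:=\limsup_{s\to\infty}F(s)\ >\ b:=\liminf_{s\to\infty}F(s).
\end{align}
Choose scalar sequences $t_k\uparrow\infty$ and $\sigma_k\uparrow\infty$ with $F(t_k)\to a$ and $F(\sigma_k)\to b$. (If $f\notin\E_1$ only through failure of \emph{local uniformity} while the pointwise limit exists, the same scheme runs with amplitudes $t_k z_k$ along directions $z_k\to z$; I would treat the cleaner pointwise-failure case as the model situation.)

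\textbf{Step 2: geometric setup and the candidate sequence.} Since $\delta_x\perp\mu$ forces $\mu(\{x\})=0$ while $x\in\supp(\mu\mres\overline\Omega)$, continuity from above yields $0<\mu(B(x,\rho))\to 0$ as $\rho\downarrow 0$. I would then set $u_j=g_j\,z$ with $g_j\ge 0$ smooth, supported in $B(x,1/j)\cap\Omega$, and normalised by $\int_\Omega g_j\,d\mu=1$. The $\mu$-strict convergence $u_j\to z\delta_x$ is the easy part and is essentially forced by this normalisation: for $\phi\in C(\overline\Omega)$ one has $\int_\Omega \phi\,u_j\,d\mu=z\int_\Omega \phi\,g_j\,d\mu\to z\,\phi(x)$ because $g_j\ge 0$, $\int g_j\,d\mu=1$ and $\supp g_j\to\{x\}$, giving the weak\* limit $z\delta_x$; and splitting $\Omega$ into $B(x,1/j)$ and its complement gives, via $\max(1,g_j)\le\sqrt{1+g_j^2}\le 1+g_j$, the convergence $\int_\Omega\sqrt{1+|u_j|^2}\,d\mu\to\mu(\overline\Omega)+1=|(\mu,z\delta_x)|(\overline\Omega)$, which is exactly the $\mu$-strict criterion recorded in the subsection on $\mu$-strict convergence.

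\textbf{Step 3: realising the oscillation.} The heart of the argument is to choose the \emph{shape} of $g_j$ so that $\int_\Omega f(u_j)\,d\mu\approx F(\tau_j)$ for a prescribed amplitude $\tau_j$, which I would alternate between the $t_k$ and the $\sigma_k$ so that the integral accumulates at both $a$ and $b$ and hence fails to converge. The model computation: if $g_j\equiv\tau_j$ on a core $K_j\subset B(x,1/j)$ with $\tau_j\,\mu(K_j)\to 1$ and the smoothing collar $T_j$ satisfies $\tau_j\,\mu(T_j)\to 0$, then on $K_j$ one gets $f(\tau_j z)\mu(K_j)\approx f(\tau_j z)/\tau_j=F(\tau_j)$, while the collar contributes at most $C(1+\tau_j)\mu(T_j)\to 0$ by the linear growth bound. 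The collar is harmless for a \emph{fixed} core because outer regularity lets $\mu(T_j)$ be made arbitrarily small once $K_j$ is chosen.

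\textbf{Main obstacle.} The genuine difficulty is that the $\mu$-strict normalisation $\int g_j\,d\mu=1$ pins the amplitude to the reciprocal of the $\mu$-mass the bump carries, so realising a \emph{specific} good scale $\tau_j\in\{t_k\}\cup\{\sigma_k\}$ requires finding, near $x$, a core of $\mu$-mass essentially $1/\tau_j$; for purely atomic or otherwise granular $\mu$ the achievable masses form a thin (Cantor-type) set and a single core need not match a good scale. The way I would overcome this is to spread the unit mass over \emph{many} disjoint small cores $K_i$ (individual atoms, or non-atomic chunks) placed at amplitudes $s_i$ drawn from the open unbounded set $\{s:F(s)>a-\varepsilon\}$, so that $\int f(u_j)\,d\mu\approx\sum_i F(s_i)\,s_i\mu(K_i)\in(a-\varepsilon,a]\cdot\sum_i s_i\mu(K_i)$: the value $a$ is then attained up to $\varepsilon$ provided only that the single scalar constraint $\sum_i s_i\mu(K_i)=1$ can be met, and this is arranged by varying the $s_i$ continuously in a good interval and invoking the intermediate value theorem, using that arbitrarily small $\mu$-masses are available near $x$ and that good scales are unbounded. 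Carrying out the $b$-phase identically and interleaving the two families yields a single $\mu$-strictly convergent sequence whose energies oscillate between $a$ and $b$. The points needing care—and the parts I expect to be fiddly rather than deep—are (i) this scale/mass matching for adversarial $\mu$, and (ii) keeping every collar contribution negligible uniformly in $j$ via linear growth.
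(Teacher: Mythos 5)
Your proposal follows essentially the same route as the paper: a normalised bump $u_j=g_j z$ concentrating at $x$ with $\int g_j\,d\mu=1$ (which gives $\mu$-strict convergence to $z\delta_x$ exactly as in the paper's computation of $|(u_r d\mu,\mu)|$), amplitudes tuned to scales where $Tf$ has two distinct cluster values $M>m$, a collar made negligible by linear growth together with $\mu(B_r^\eps)/\mu(B_r)\to 1$, and alternation of the two scale families to destroy convergence. The only point of divergence is the scale/mass matching that you correctly single out as the main obstacle: the paper resolves it in the opposite direction, by \emph{refining the sequences} $(z_j),(w_j)$ so that the good scales fit the masses $\int\phi_r\,d\mu$ that single ball-shaped bumps can realise, whereas you keep the scales and redistribute the mass over several cores, closing the gap with an intermediate value argument. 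Be aware that your IVT step as written needs the good-scale set $\{s:F(s)>a-\eps\}$ to contain an interval $(\alpha,\beta)$ for which a union of available pieces near $x$, each of mass less than $1/\alpha-1/\beta$, can total a value in $(1/\beta,1/\alpha)$; for granular $\mu$ (e.g.\ atoms of mass $4^{-n}$) and a good set made of very short intervals this requires combining good intervals at several scales, so this step is no more automatic than the paper's ``refine the sequences'' -- both arguments are informal at exactly the same place.
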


The above theorem implies that for all $e_{f_i,i\in \N}\geq e_f$ (in the sense of compactifications), $\xi_{u_j}$ does not converge in $Y(\mu,e_{f_i,i\in\N})$.

\begin{proof}
Because of the assumptions on $\mu$, we can find $x\in supp(\mu\mres \overline{\Omega})$ so that $\delta_x \perp \mu$. Notice that the result is unchanged if we instead consider $f(x) + C_1|z| + C_2$, so taking $C_1,C_2>0$ big enough we can assume that $f\geq 0$ everywhere. Find $z\in \partial \B^d$ and $z_j,w_j\to z$ so that
\begin{align}
\lim_n Tf(z_j) = M \text{ and } \lim_j Tf(w_j) = m \text{ exist, and } M>m.
\end{align}
Next, because $x\in supp(\mu)$ then $\mu(B_r(x))>0$ for all $r$. There are two possible scenarios. First, $x\in \Omega$, in which case we consider only those balls $B_r(x)$ so that $B_{2r}(x)\subset \Omega$. If $x\in \partial \Omega$ then we can $\delta_r \downarrow 0$ so that $\mu(B_r(x)) \cap \Omega_{-\delta(r)})>0$. Either way, we call $B_r(x)$ or $B_r(x)\cap \Omega_{-\delta(r)}$ simply $B_r$. Furthermore, we can find $\eps = \eps(r)>0$ so that
\begin{align}
B_r^\eps = (B_r)_\eps = \{x\in \R^n: d(x,B_r)<\eps\} \subset \Omega
\end{align}
and
\begin{align}
\lim_{r\downarrow 0} \frac{\mu( B_r^\eps) }{\mu(B_r) } = 1.
\end{align}
For each such $r$ find $\phi_r\in \D( B_r^\eps ), 0\leq \phi_r \leq 1$ so that $\phi_r(B_r)=1$. Put $u_r = \frac{\phi_r}{\int \phi_r d\mu}$. Clearly $u_r \rightharpoonup^* \delta_x$ in $C(\overline{\Omega})^*$. Next, refine the sequences $(z_j)_j$ and $(w_j)_j$ so that there is $r_j\downarrow 0$ so that
\begin{align}
\int_{B_{r_{2j}}}\phi_{2j} d\mu = 1-|z_j| \quad \text{ and } \quad \int_{B_{r_{2j+1}}^\eps} \phi_{2j+1}d\mu = 1-|w_j|,
\end{align}
and put
\begin{align}
u_j = \begin{cases}
u_{r_j} z_{\frac{j}{2}} & \text{ if j is even}, \\
u_{r_j} w_{\frac{j-1}{2}} & \text{ if j is odd}.
\end{cases}
\end{align}
First we show that $u_j\xrightarrow{\mu-strictly} z\delta_x$ in $\M(\overline{\Omega},\R^d)$. Because $z_j,w_j\to z$, it is enough to show that $u_r \xrightarrow{\mu-strictly} \delta_x$ in $\M(\overline{\Omega})$ as $r\downarrow 0$. Because $u_r\rightharpoonup^* \delta_x$ in $C(\overline{\Omega})^*$ then
\begin{align}
\liminf_{r\to 0} |(u_r d\mu,\mu)|(\overline{\Omega}) \geq |(\delta_x,\mu)|(\overline{\Omega}).
\end{align}
To achieve the opposite inequality, we calculate
\begin{align}
|(u_r d\mu,\mu)|(\Omega) = & |(0,\mu)|(\Omega\setminus B_r^\eps) + |(u_r,1)d\mu|(\Omega\cap B_r^\eps) = \mu(\Omega\setminus B_r^\eps) + \int_{B_r^\eps} \sqrt{1+ u_r^s}d\mu \\
= & \mu(\Omega\setminus B_r^\eps) + \frac{ \int_{B_r^\eps} \sqrt{ \left( \int \phi_r d\mu \right)^2+ \phi_r^s}d\mu }{ \int \phi_r d\mu } \\
\leq & \mu(\Omega\setminus B_r^\eps) + \frac{ \int_{B_r^\eps} \sqrt{ \mu(B_r^\eps)^2+ 1}d\mu }{ \mu(B_r) } \\
= & \mu(\Omega\setminus B_r^\eps) + \frac{ \mu(B_r^\eps) \sqrt{ \mu(B_r^\eps)^2+ 1} }{ \mu(B_r) } \to \mu(\Omega)+1 = |(\delta_x,\mu)|(\overline{\Omega}).
\end{align}
Next, we study how the integral behaves on alternating integers of the sequence $(u_j)_{j\in \N}$. If $j=2i$ then
\begin{align}
\liminf_i \int_\Omega f(u_{2i}(x))d\mu(x) \geq \liminf_i \int_{B_{r_2i}} f \left( \frac{z_i}{1-|z_i|} \right) d\mu = \lim_i Tf(z_i) = M.
\end{align}
On the other side, if $j=2i+1$ we get the upper bound
\begin{align}
\limsup_i \int_\Omega f(u_{2i+1}(x))d\mu(x) \leq \limsup_i \int_{B_{r_{2i+1}}^\eps} f \left( \frac{w_i}{1-|w_i|} \right) d\mu = \lim_i Tf(w_i) = m.
\end{align}
\end{proof}

\begin{remark}
The assumption on $\mu$ is sharp. If for all $x\in supp(\mu\mres \overline{\Omega})$ we have $\delta_x \not\perp \mu$, then all such $x$'s belong to $\Omega$. Consider the atomic decomposition of $\mu$:
\begin{align}
\mu = \mu^a + \mu^{n-a} = \sum_n \mu(x_n) \delta_{x_n} + \mu^{n-a},
\end{align}
see \refp{Appendix:atomic_decomposition}. If $\mu(\Omega\setminus \{x_n,n\in \N\})>0$ then we could find $x\in \Omega\setminus \{x_n,n\in \N\}$ and $\delta_x\perp \mu$, so that $\mu^{n-a} = 0$. Then $\mu = \sum_n \mu(x_n)\delta_{x_n}$, and the set $\{x_n,n\in \N\}$ contains its accumulation points. In particular $\{x_n,n\in \N\}=supp(\mu)$ is a compact subset of $\Omega$. It is easy to see that, in this setting, if $(\phi_j)_{j\in \N}\subset L^1(\mu)$ is bounded in norm and
\begin{align}
\phi_j \xrightarrow{Y(\mu,e_{f_i,i\in \N})} \big( \nu_x,\lambda,\nu_x^\infty \big),
\end{align}
then $\lambda \ll \mu$, i.e. $\lambda = \sum_n \lambda(x_n)\delta_{x_n}$ (because the space is countable and compact). Also
\begin{align}
\phi_j \rightharpoonup^* \phi = \left( \overline{\nu_x} + \overline{\nu_x^\infty} \frac{\lambda}{\mu} \right)d\mu
\end{align}
 in $C_0(X)^*$, $X= \{x_n,n\in \N\}$. Assume also that $\phi_j \to \phi$ $\mu$-strictly. This amounts to the following
\begin{align} \label{eq_optimal_no_embedding}
\int_X f(\phi_j)d\mu \to \int_X \langle \nu_x,f\rangle + \langle \nu_x^\infty,f^\infty\rangle \frac{\lambda}{\mu} d\mu = \int_X f(\phi)d\mu,
\end{align}
where $f(z)=\sqrt{1+|z|^2}$. $f$ is a strictly convex function, therefore the inequality $f(x+y)\leq f(x)+f^\infty(y)$ is strict unless $y=0$. We have
\begin{align}
\langle \nu_x,f\rangle + \langle \nu_x^\infty,f^\infty\rangle \frac{\lambda}{\mu} \geq & f(\overline{\nu_x}) + f^\infty \left( \overline{\nu_x^\infty} \frac{\lambda}{\mu} \right) > f\left( \overline{\nu_x} + \overline{\nu_x^\infty} \frac{\lambda}{\mu} \right) = f(\phi)
\end{align}
unless $\overline{\nu_x^\infty} = 0$ $\lambda$-a.e. So $\phi = \overline{\nu_x}$ $\mu$-a.e., and using convexity once again in \eqref{eq_optimal_no_embedding}, and the fact that $f^\infty = 1$, we get
\begin{align}
f(\phi) = & \langle \nu_x,f\rangle + \langle \nu_x^\infty,f^\infty\rangle \frac{\lambda}{\mu} \geq f(\overline{\nu_x}) + \frac{\lambda}{\mu} = f(\phi) + \frac{\lambda}{\mu}.
\end{align}
So $\lambda = 0$, which implies that the sequence $\phi_j$ does not concentrate. Moreover, $\phi(x)=\overline{\nu_x}$, which means that $\phi_j \to \phi$ in measure. Then $\phi_j\to \phi$ strongly in $L^1(\mu)$, and \refp{counter_example_embedding_YM} is false.
\end{remark}

\newpage
\section{Characterisation of gradient Young Measure\\ on general compactifications}

In this section, we show that generalised gradient Young Measures are characterised by a set of integral inequalities. A characterisation result was previously obtained in the context of the sphere compactification, see \cite{kristensen2010characterization} and \cite{kristensen2019oscillation} for the result on general differential operators, and it's here extended to general compactifications.

\subsection{Non-separability of the space of quasi-convex functions}

We start by showing that the set of quasi-convex functions having linear growth is non-separable. Lack of separability prevents sequential compactness and other essential properties that were used to develop the theory of generalised Young Measures (see the section above). Therefore, we are forced to consider only smaller countable collections of quasi-convex functions at the time, and cannot work with the entire class. To show that the class is non-separable, we modify the example by Muller \cite{muller1992quasiconvex} and generate quasi-convex functions that oscillate at different amplitudes in the same direction.

\begin{theorem}
The space of quasi-convex functions $f\colon\R^{2\times 2}\to \R$ having linear growth is not separable with respect to $\| T\cdot\|_{\infty,\B^{2\times 2}}$.
\end{theorem}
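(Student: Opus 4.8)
The plan is to exhibit an uncountable family $\{f_\sigma\}_{\sigma\in\{0,1\}^\N}$ of quasi-convex functions of linear growth that is uniformly separated in the norm $\|T\cdot\|_{\infty,\B^{2\times2}}$; since a metric space containing an uncountable $c$-separated set (with $c>0$) cannot be separable, this settles the claim. The separation will be read off at infinity. Because $T$ is linear and $\|Tg\|_{\infty,\B^{2\times2}}\geq|Tg(\hat w)|$ for every $\hat w\in\B^{2\times2}$, evaluating at $\hat w=\widehat{az_0}$ for a fixed direction $z_0$ (the direction in which Müller's example oscillates, normalised to $|z_0|=1$) and a scale $a\to\infty$ gives, using $\hat z/(1-|\hat z|)=z$ and $1-|\widehat{az_0}|=(1+a)^{-1}$,
\begin{align}
\|T(f_\sigma-f_{\sigma'})\|_{\infty,\B^{2\times2}}\geq \frac{|f_\sigma(az_0)-f_{\sigma'}(az_0)|}{1+a}.
\end{align}
Thus it suffices to arrange that the radial profiles $t\mapsto f_\sigma(tz_0)/t$ take two definite values $M>m$ at a sequence of scales $a_k\to\infty$, equal to $M$ when $\sigma_k=1$ and to $m$ when $\sigma_k=0$: at the first index where $\sigma$ and $\sigma'$ differ the right-hand side then tends to $M-m$, producing a uniform gap $c>0$.

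To build the $f_\sigma$ I would start from Müller's quasi-convex function of linear growth whose recession fails to exist, modifying his lamination/supremum construction to produce, for each $k$, a localized oscillating block $b_k\colon\R^{2\times2}\to\R$ that is quasi-convex, obeys $|b_k(z)|\leq C(1+|z|)$ with $C$ independent of $k$, attains $b_k(a_kz_0)\geq(M-o(1))a_k$, yet stays below the baseline, $b_k(z)\leq m|z|$, for all $z$ outside a scale-window $\{\,s_k\leq|z|\leq s_k'\,\}$, the windows being chosen disjoint and tending to infinity so that $a_j$ lies outside window $k$ whenever $j\neq k$. Setting
\begin{align}
f_\sigma(z):=\max\Big( m|z|,\ \sup_{k:\,\sigma_k=1} b_k(z)\Big),
\end{align}
each $f_\sigma$ is quasi-convex: $|z|$ is convex hence quasi-convex, and a pointwise supremum of quasi-convex functions is quasi-convex, since testing \eqref{Introduction:eq:quasi_convexity} block by block and passing to the supremum gives $\int_\Omega f_\sigma(z+D\phi)\,dx\geq|\Omega|b_k(z)$ for every $k$, hence $\geq|\Omega|f_\sigma(z)$. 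The uniform growth bound and the disjointness of the windows make $f_\sigma$ a genuine function of linear growth, quasi-convex functions of linear growth being automatically locally Lipschitz. By construction $f_\sigma(a_kz_0)/a_k$ equals $M$ if $\sigma_k=1$ and $m$ otherwise (for $j\neq k$, $a_k$ lies outside window $j$, so $b_j(a_kz_0)\leq m\,a_k$), which yields the desired separation.

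The main obstacle is the construction of the localized block $b_k$. Quasi-convexity is a nonlocal condition, so one cannot simply truncate Müller's oscillating function to a scale-window; confining the oscillation to $\{s_k\leq|z|\leq s_k'\}$ while forcing $b_k$ below $m|z|$ outside it must be engineered inside the supremum-of-rank-one-convex construction that produces Müller's example, and the linear-growth constant must be kept uniform in $k$. A secondary point to verify is that the infinite supremum defining $f_\sigma$ retains linear growth and continuity; this follows from the uniform bound $b_k\leq C(1+|\cdot|)$ together with the fact that on any bounded set only finitely many windows intersect, so locally $f_\sigma$ coincides with a finite maximum.
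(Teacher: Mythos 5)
Your high-level strategy is the same as the paper's: produce an uncountable family of quasi-convex functions of linear growth whose pairwise distances in $\|T\cdot\|_{\infty,\B^{2\times 2}}$ are bounded below by a universal $c>0$, the separation being read off along a sequence of scales going to infinity, with Müller's example as the source of quasi-convex oscillation. The framing steps you do carry out are correct: the evaluation $Tg(\widehat{az_0})=(1+a)^{-1}g(az_0)$, the fact that an uncountable uniformly separated set kills separability, and the observation that a (finite-valued, locally bounded) pointwise supremum of quasi-convex functions is quasi-convex, since $\int_\Omega f(z+D\phi)\,dx\geq\int_\Omega b_k(z+D\phi)\,dx\geq|\Omega|b_k(z)$ for each $k$ and one then takes the supremum on the right.

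The genuine gap is the one you flag yourself: the localized blocks $b_k$ are never constructed, and they are the entire content of the theorem. You need, for each $k$, a quasi-convex function with uniformly linear growth that is large (of order $Ma_k$) at $a_kz_0$ yet bounded by $m|z|$ outside an annulus $\{s_k\leq|z|\leq s_k'\}$; as you note, quasi-convexity is nonlocal, so this cannot be obtained by truncating Müller's function, and rank-one convexity already forces the windows to be very wide ($s_k'\gtrsim a_k(M-2m)/m$ by convexity along a rank-one line through $a_kz_0$), so it is not even clear that such blocks exist. Without them the proof reduces the theorem to an unverified construction that is strictly harder than what Müller's paper provides. The paper's proof avoids localization altogether: it sets $g_\Lambda(F)=|F_{11}-F_{22}|+|F_{12}+F_{21}|+\inf_{i\in\Lambda}|F_{11}+F_{22}-2\cdot3^i|$ and takes $f_\Lambda=Qg_\Lambda$. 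The ``low'' values come for free from $0\leq Qg_\Lambda\leq g_\Lambda$ and $g_\Lambda(3^j\1)=0$ for $j\in\Lambda$, while the ``high'' values at $j\notin\Lambda$ come from the pointwise comparison $g_\Lambda(3^j\1+G)\geq g_k(G)$ with $k=3^j-3^{j-1}$ together with Müller's lower bound $Qg_k(0)\geq c_1k$ and the monotonicity of $Q$. If you want to salvage your scheme, the cleanest fix is to abandon the localized blocks and the supremum, and instead encode the whole index set $\sigma$ (or $\Lambda\subset\N$) into a single distance-type integrand whose quasi-convex envelope you can bound from below at the complementary scales, exactly as the paper does.
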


The idea behind the proof is to construct an uncountable family $\{ f_\Lambda \}_\Lambda$ so that $\|Tf_\Lambda-Tf_\Gamma\|_\infty \geq c$ for some universal constant $c>0$ and all $\Lambda\neq \Gamma$. We will split the proof of the above result into two different parts. First, we show that we have quasi-convex functions that oscillate along every possible sequence of natural numbers.

\begin{proposition} \label{prop_diff_seq}
There is $c>0$ such that for every $\Lambda\subset \N$ infinite so that $\Lambda^c$ is also infinite there exists $f_\Lambda\colon\R^{2\times 2}\to \R$ quasi-convex and having linear growth such that
\begin{enumerate}
\item $f_\Lambda (3^j \1)=0$ for all $j\in \Lambda$ and
\item $\frac{ f_\Lambda (3^j \1 ) }{ 3^j } \geq c$ for all $j\in \Lambda^c$ sufficiently large.
\end{enumerate}
\end{proposition}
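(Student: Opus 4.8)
The plan is to realise the prescribed oscillation pattern through a single explicit formula built from the determinant, which in $2\times 2$ is the only nontrivial null Lagrangian and which grows \emph{quadratically} along the diagonal ray $t\mapsto t\1$ while the Euclidean norm grows only linearly. This quadratic-versus-linear gap is what lets a single quasi-convex building block be sensitive to the \emph{scale} $3^k$ and not merely to the direction $\1$. Encoding $\Lambda$ as an amplitude schedule $a_k$ with $a_k=0$ for $k\in\Lambda$ and $a_k=c$ for $k\in\Lambda^c$, I would assemble $f_\Lambda$ so that along the diagonal its graph is a succession of parabolic ``notches'' located exactly at the scales $3^k$, $k\in\Lambda$, separating elevated plateaus at the scales $k\in\Lambda^c$.

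Concretely, following Müller \cite{muller1992quasiconvex}, the natural first attempt is polyconvex. For $k\in\Lambda^c$ set
\begin{align}
P_k(\xi) = 3^k - \mu_k\det\xi + \mu_k 3^k\,\tr\xi - \mu_k 9^k, \qquad \mu_k = \tfrac{9}{4}\,3^{-k},
\end{align}
each of which is quasiaffine (affine in $(\xi,\det\xi)$), and put $f_\Lambda = \max\big(0,\ \sup_{k\in\Lambda^c}P_k\big)$; as a supremum of affine functions of $(\xi,\det\xi)$ this is polyconvex, hence quasi-convex. Along the ray $\xi=s\1$ one has $\det\xi=s^2$ and $\tr\xi=2s$, so $P_k(s\1)=3^k-\mu_k(s-3^k)^2$, a downward parabola peaking at $s=3^k$ with height $3^k$. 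The choice $\mu_k=\frac94 3^{-k}$ is calibrated so that $P_k(3^j\1)\le 0$ for every $j\ne k$ (with equality at $j=k-1$), whence $f_\Lambda(3^j\1)=0$ for $j\in\Lambda$ and $f_\Lambda(3^j\1)\ge 3^j$ for $j\in\Lambda^c$. Since moreover $\mu_k 3^k\equiv\frac94$ is constant, on $\{\det\xi\ge 0\}$ each block is bounded above by $\frac94\,\tr\xi\le\frac94\sqrt2\,|\xi|$, which already delivers $c=1$ and linear growth there.

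The main obstacle is the behaviour on \emph{indefinite} matrices: when $\det\xi<0$ the term $-\mu_k\det\xi=\mu_k|\det\xi|$ is positive and, since $|\det\xi|$ ranges up to $|\xi|^2/2$, the block $P_k$ grows \emph{quadratically} in these directions, destroying linear growth for the smallest active $k$. This is not an artefact of the formula: convexity forces the $d$-slope of any affine support of a downward notch on the boundary $\{d=|\xi|^2/2\}$ to be negative, hence to blow up as $d\to-\infty$ across the physical region $\{|\det\xi|\le|\xi|^2/2\}$, so no polyconvex function can carry these notches with linear growth. The resolution is exactly Müller's: the function realising the notches is quasi-convex but \emph{not} polyconvex, and its quasi-convexity must be verified by his finer technique (testing against explicit rank-one laminates supported on the diagonal octaves) rather than by Jensen's inequality for $\det$. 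Borrowing that verification, the determinant still performs the essential job of \emph{scale separation}, confining each notch to its own octave $[3^{k-1},3^{k+1}]$, so that only adjacent scales interact.

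Finally I would check that Müller's estimates are \emph{uniform} over all $0/c$ schedules. Because distinct notches do not overlap, the worst case is a single pair of adjacent scales carrying different amplitudes, and this pins down a threshold amplitude $c>0$ that works simultaneously for every admissible $\Lambda$: the hypothesis that both $\Lambda$ and $\Lambda^c$ are infinite only guarantees infinitely many transitions between the two amplitudes (this is what makes $f^\infty(\1)$ genuinely oscillate) but never forces the amplitude gap above $c$ or two notches to overlap. With $c$ so fixed, linear growth follows from boundedness of the schedule with a constant independent of $\Lambda$, property (1) holds for all $j\in\Lambda$ by construction, and property (2) holds for $j\in\Lambda^c$ sufficiently large, completing the proof.
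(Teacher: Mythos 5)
Your proposal correctly identifies the source (M\"uller's construction) and the right geometric picture --- notches along the diagonal ray located at the scales $3^k$, $k\in\Lambda$, spaced so that only adjacent octaves interact --- but the actual function is never produced. The explicit candidate you write down is polyconvex and, as you yourself diagnose, fails to have linear growth on matrices with negative determinant; the replacement is then described only as ``borrowing M\"uller's verification'' of quasi-convexity by testing against laminates. This inverts the logic of M\"uller's argument and leaves all three substantive claims (quasi-convexity, linear growth, and the lower bound in item (2)) unproved. M\"uller does not exhibit an explicit quasi-convex function and then verify its quasi-convexity; he takes the \emph{quasi-convex envelope} $Qg_k$ of the explicit nonnegative function $g_k(F)=|F_{1,1}-F_{2,2}|+|F_{1,2}+F_{2,1}|+(2k-|F_{1,1}+F_{2,2}|)^+$, for which quasi-convexity and linear growth are automatic, and the entire difficulty is the \emph{lower} bound $Qg_k(0)\ge c_1k$.

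The paper's proof follows exactly that template, and the two steps missing from your proposal are the following. First, define
\begin{align}
g_\Lambda(F)=|F_{1,1}-F_{2,2}|+|F_{1,2}+F_{2,1}|+\inf\{|F_{1,1}+F_{2,2}-2\cdot 3^i| : i\in\Lambda\},\qquad f_\Lambda=Qg_\Lambda .
\end{align}
Quasi-convexity and linear growth are then free (since $0\le f_\Lambda\le g_\Lambda$), and $f_\Lambda(3^j\1)=0$ for $j\in\Lambda$ because $g_\Lambda(3^j\1)=0$ and $g_\Lambda\ge0$. Second, for $j\notin\Lambda$ one has the pointwise domination $g_\Lambda(3^j\1+G)\ge g_k(G)$ with $k=3^j-3^{j-1}$, coming from $\inf_{i\in\Lambda}|2(3^j-3^i)+\beta|\ge\big(2(3^j-3^{j-1})-|\beta|\big)^+$; monotonicity and translation-equivariance of the quasi-convex envelope then give $f_\Lambda(3^j\1)\ge Qg_k(0)\ge c_1(3^j-3^{j-1})$, i.e. $f_\Lambda(3^j\1)/3^j\ge \tfrac{2}{3}c_1=:c$ for $j\in\Lambda^c$ large, with $c$ independent of $\Lambda$. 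Without this reduction your discussion of ``uniformity over amplitude schedules'' has nothing to attach to, and your side claim that no polyconvex function can carry the notches, while in the spirit of M\"uller's paper, is not needed for the statement.
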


To prove this theorem we first need a preliminary lemma, whose proof can be found in \citep{muller1992quasiconvex}{299}, lemma 4.
\begin{lemma} \label{ineq_muller}
For $k\in \R^+$ we let
\begin{align}
g_k\colon \R^{2\times 2}\to \R, F\mapsto  |F_{1,1}-F_{2,2}| + |F_{1,2} + F_{2,1}| + (2k - |F_{1,1} + F_{2,2}|)^+.
\end{align}
Then there exists $c_1>0$ such that
\begin{align}
Qg_k(0)\geq c_1 k
\end{align}
for all sufficiently large $k$s.
\end{lemma}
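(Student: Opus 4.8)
The plan is to first strip the parameter $k$ off by an exact scaling identity, which reduces the whole statement to the positivity of a single number, and then to establish that positivity through the geometry of the zero set of the integrand.

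\textbf{Step 1 (scaling).} Recall that the quasiconvex envelope is $Qg(F)=\inf\{\fint_\Omega g(F+D\psi)\,dx:\psi\in W_0^{1,\infty}(\Omega,\R^2)\}$, independent of the bounded Lipschitz domain $\Omega$. The family $(g_k)$ is self-similar: a direct check gives $g_k(F)=k\,g_1(F/k)$. Since for $k>0$ the operations ``multiply by $k$'' and ``precompose the argument with $\tfrac1k\,\mathrm{id}$'' both commute with quasiconvexification at the origin (the latter because $\psi\mapsto\tfrac1k\psi$ is a bijection of $W_0^{1,\infty}$), I would deduce the exact identity
\[
Qg_k(0)=k\,Qg_1(0).
\]
Thus the lemma is equivalent to the single claim $c_1:=Qg_1(0)>0$, and once this is shown it holds for every $k>0$, not merely for large $k$.

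\textbf{Step 2 (geometry, and why polyconvexity is insufficient).} Writing $a=F_{11}+F_{22}=\tr F$, $b=F_{11}-F_{22}$, $c=F_{12}+F_{21}$, $d=F_{12}-F_{21}$, one has $4\det F=a^2+d^2-b^2-c^2$; identifying $\phi=(u,v)$ with $w=u+iv$, the first two terms of $g_1$ equal $|b|+|c|$ with $b+ic=2\partial_{\bar z}w$, while $a=2\,\mathrm{Re}\,\partial_z w=\div\phi$. Hence $g_1$ vanishes exactly on the conformal matrices $b=c=0$ with $|a|\ge2$, a set $\mathcal K$ with two components ($a\ge2$ and $a\le-2$). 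I would emphasise here that the bound cannot come from the polyconvex envelope: testing the barycentre constraints $\langle F\rangle=0$, $\langle\det F\rangle=0$ with a measure putting mass $\tfrac12(1-\delta)$ on each of $\pm\1$ and mass $\delta$ on $\mathrm{diag}(s,-s)$ with $\delta s^2\approx1$ drives $\langle g_1\rangle\to0$, so $Pg_1(0)=0$. The curl-free constraint is therefore essential, since $\mathrm{diag}(s,-s)$ is not rank-one connected to $\pm\1$ (the difference has strictly negative determinant), so no gradient field can cheaply realise that measure.

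\textbf{Step 3 (positivity by conformal rigidity, and the main obstacle).} To prove $Qg_1(0)>0$ I would argue by contradiction: suppose $\phi_j\in W_0^{1,\infty}(\Omega,\R^2)$, $|\Omega|=1$, with $\fint_\Omega g_1(D\phi_j)\to0$. Then $\int_\Omega(|b_j|+|c_j|)\to0$, i.e. $\partial_{\bar z}w_j\to0$ in $L^1$ so that $D\phi_j$ is asymptotically conformal, and $\int_\Omega(2-|a_j|)^+\to0$. The heart of the matter is a quantitative conformal rigidity statement: because the space of conformal matrices contains no nontrivial rank-one connections, $L^1$-smallness of the distance of the gradient to the conformal cone should force $D\phi_j$ to be rigid, converging along a subsequence to a single constant conformal matrix $M$; the boundary condition $\fint_\Omega D\phi_j=0$ then forces $M=0$, whence $a_j=\div\phi_j\to0$ strongly enough that $\int_\Omega(2-|a_j|)^+\to2$, contradicting $\int(2-|a_j|)^+\to0$. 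I expect the main obstacle to be exactly this rigidity step, and specifically the control of concentration: the determinant null Lagrangian yields only the $L^2$ identity $\int_\Omega|\partial_z w_j|^2=\int_\Omega|\partial_{\bar z}w_j|^2$, which is blind to $L^1$ concentration of $\partial_{\bar z}w_j$ on small sets, so the qualitative ``no rank-one connections'' principle must be upgraded to a quantitative, concentration-robust estimate. This is precisely the content of Müller's Lemma~4, which I would either invoke directly or reconstruct via a div--curl/compensated-compactness argument for the conformal set together with a coarea transition estimate; by Step~1 any positive lower bound for $g_1$ transfers verbatim to the stated bound $c_1k$ for all $k$.
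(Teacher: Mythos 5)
The first thing to say is that the paper does not actually prove this lemma: it is stated with the pointer ``whose proof can be found in M\"uller, p.~299, Lemma~4'', so the paper's own route is a bare citation. Your proposal, at its core, ends in the same place — your Step~3 explicitly defers the quantitative rigidity to ``M\"uller's Lemma~4, which I would either invoke directly or reconstruct'' — so as a proof it is only non-circular if read, like the paper, as an appeal to the external reference. That said, your Steps~1 and~2 add correct content the paper does not contain: the identity $g_k(F)=k\,g_1(F/k)$ is easily checked (for $k>0$, $k(2-|a|/k)^+=(2k-|a|)^+$), and since $\psi\mapsto k\psi$ is a bijection of $W_0^{1,\infty}$ it does give $Qg_k(0)=k\,Qg_1(0)$, reducing the lemma to $Qg_1(0)>0$ and sharpening ``sufficiently large $k$'' to all $k>0$. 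Your polyconvexity computation is also essentially right (to make the barycentre exactly zero, put mass $\delta/2$ on each of $\pm\,\mathrm{diag}(s,-s)$ rather than $\delta$ on one of them; the conclusion $Pg_1(0)=0$ stands), and it correctly explains why no null-Lagrangian argument can work.

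However, if Step~3 is meant as a self-contained substitute for M\"uller's lemma, it has a genuine gap and one outright misstatement. The zero set of the first two terms is the \emph{linear subspace} of conformal (complex-linear) matrices, and gradient maps with values in that subspace are precisely gradients of holomorphic functions — so even exact membership cannot force $D\phi_j$ to approach a \emph{constant} conformal matrix, as your sketch asserts; the absence of rank-one connections in the conformal subspace yields ``holomorphic'', not ``constant'', and it is only the zero boundary trace that then kills $w$. More seriously, the passage from $\|\partial_{\bar z}w_j\|_{L^1}\to 0$ to $a_j=2\,\mathrm{Re}\,\partial_z w_j\to 0$ in measure is exactly the concentration-sensitive step: the determinant identity $\int|\partial_z w_j|^2=\int|\partial_{\bar z}w_j|^2$ requires $L^2$ control you do not have, and $L^1$-smallness of $\partial_{\bar z}w_j$ on its own permits $\partial_{\bar z}w_j$ to be huge on tiny sets while $|a_j|\geq 2$ persists on most of $\Omega$ (note also that weak convergence $a_j\rightharpoonup 0$ is compatible with $|a_j|\geq 2$ a.e., so no soft compactness argument can close this). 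You identify this obstacle yourself, which is to your credit, but it means the sketch does not discharge the lemma: the quantitative, concentration-robust estimate \emph{is} M\"uller's Lemma~4, and citing it — as both you and the paper ultimately do — is the only complete step in your Step~3.
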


We can then prove \refp{prop_diff_seq}.
\begin{proof}
Let $\Lambda\subset \N$ as in the proposition above and set $f_\Lambda = Q g_\Lambda$ where
\begin{align}
g_\Lambda(F) = |F_{1,1}-F_{2,2}| + |F_{1,2}+F_{2,1}| + \inf\{ |F_{1,1}+F_{2,2}-2\cdot 3^i|,i\in \Lambda \}.
\end{align}
We have that $g_\Lambda(3^j \1)=0$ for all $j\in \Lambda$ and so $f_\Lambda(3^j \1)=0$ as well given that $g_\Lambda \geq 0$.



We first derive the following lower bound: compute
\begin{align}
g_\Lambda(3^j \1 + G) = |G_{1,1}-G_{2,2}| + |G_{1,2}+G_{2,1}| + \inf\{ |G_{1,1}+G_{2,2} + 2(3^j- 3^i)|: i\in \Lambda\};
\end{align}
because $3^j$ is increasing we estimate, for arbitrary $\beta\in \R$,
\begin{align}
\inf_{i\in \Lambda} |2(3^j -3^i) + \beta| \geq \left( \inf_{i\neq j} |2(3^j-3^i)| - |\beta| \right)^+ = \big( 2(3^j-3^{j-1}) - |\beta| \big)^+,
\end{align}
and so $g_\Lambda(3^j \1+G) \geq g_k(G)$ for all matrices $G$ and $k=3^j-3^{j-1}$, and $g_k$ given by
\begin{align}
g_k(F) = |F_{1,1}-F_{2,2}| + |F_{1,2} + F_{2,1}| + (2k - |F_{1,1} + F_{2,2}|)^+
\end{align}
does not depend on $\Lambda$. Apply \refp{ineq_muller} to find $c>0$ (independent of $\Lambda$) such that
\begin{align}
f_\Lambda(3^j \1) = Qg_\Lambda(3^j \1) \geq Qg_k(0) \geq c_1 (3^j-3^{j-1}) \text{ for j big enough.}
\end{align}
Dividing everything by $3^j$ we get
\begin{align}
\frac{ f_\Lambda(3^j \1) }{ 3^j } \geq \frac{2}{3}c_1 \equiv c.
\end{align}
\end{proof}

It's not a priori clear if these functions are "far away from each other at infinity", as subsets of natural numbers could intersect infinitely many times. To show that there is a wide variety of sequences that differ at infinity, we define the following relation.
\begin{definition}
Given $\Gamma,\Lambda$ any two sequences (not necessarily subsets of $\N$), we say that $\Gamma \leq \Lambda$ provided $\Gamma$ is eventually a subset of $\Lambda$, i.e.
\begin{align}
& \Lambda = (a_i)_{,i\in \N},\Gamma = (b_i)_{i\in \N}, \quad \Lambda \leq \Gamma \quad \iff\\
& \qquad \quad \text{ there exists }k>0: (a_i)_{i\geq k} \text{ is a subsequence of } ( b_i)_{i\geq 0}.
\end{align}
Let $[\Lambda]$ be the equivalence class of $\Lambda$ with respect to $\leq$, i.e. $\Gamma\in [\Lambda]$ if $\Gamma\leq \Lambda \leq \Gamma$.

If $\Lambda'\in [\Lambda]$ and $\Gamma'\in [\Gamma]$ then $\Lambda\leq \Gamma$ if and only if $\Lambda'\leq \Gamma'$.

We use $\big( \F,\leq \big)$ to indicate the set of equivalence classes with the inherited order.
\end{definition}

The above ordering is needed because, to show that we have uncountably many sequences that are independent of each other at infinity, we will use Zorn's lemma and find a maximal set. One could also reason that the Stone-Cech compactification of natural numbers is not metrisable and reason by contradiction using a suitably adapted version of \refp{subAlgebraRepresentation}, but we decided to not pursue this path.

\begin{lemma}
There exists an uncountable family $G\subset F$ such that for every different pair $\Lambda,\Gamma\in G$, $\Lambda$ is not comparable to either $\Gamma$ nor $\Gamma^c$ with respect to $\leq$.
\end{lemma}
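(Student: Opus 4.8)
The plan is to reduce the stated combinatorial requirement to the existence of an uncountable \emph{independent} family of subsets of $\N$, and then to exhibit such a family by hand; this bypasses the Zorn's lemma / maximality route alluded to above. The first step is to translate the order $\leq$ into the language of almost-inclusion. Since each sequence here enumerates an infinite coinfinite subset of $\N$ in increasing order, a tail of $\Lambda$ is a subsequence of $\Gamma$ precisely when $\Lambda\setminus\Gamma$ is finite, so that $\Lambda\leq\Gamma$ is the almost-inclusion $\Lambda\subseteq^{*}\Gamma$. With this dictionary, demanding that $\Lambda$ be incomparable both to $\Gamma$ and to $\Gamma^{c}$ unravels into the four requirements
\begin{align}
|\Lambda\cap\Gamma| = |\Lambda\setminus\Gamma| = |\Gamma\setminus\Lambda| = |(\Lambda\cup\Gamma)^{c}| = \infty,
\end{align}
i.e. that all four atoms of the Boolean algebra generated by $\Lambda$ and $\Gamma$ are infinite. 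This is exactly the assertion that the pair $\{\Lambda,\Gamma\}$ is independent, so it suffices to build an uncountable independent family lying in pairwise distinct $\subseteq^{*}$-classes.

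Next I would produce the family via the classical Fichtenholz--Kantorovich trick. Take the countable index set
\begin{align}
\Omega = \big\{ (s,\mathcal{B}) : s\subseteq\N \text{ finite},\ \mathcal{B}\subseteq\P(s) \big\},
\end{align}
fix a bijection $\Omega\cong\N$, and for each $X\subseteq\N$ set $A_{X} = \{(s,\mathcal{B})\in\Omega : X\cap s\in\mathcal{B}\}$. The verification of independence is the technical heart: given finitely many distinct sets $X_{1},\dots,X_{p},Y_{1},\dots,Y_{q}$, I would pick one point from each of their pairwise symmetric differences to form a finite $s_{0}$ on which all the traces $X_{i}\cap s_{0}$, $Y_{j}\cap s_{0}$ are distinct, and note that this persists for every finite $s\supseteq s_{0}$. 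For each such $s$ the choice $\mathcal{B}=\{X_{1}\cap s,\dots,X_{p}\cap s\}$ places $(s,\mathcal{B})$ in $\bigcap_{i}A_{X_{i}}\cap\bigcap_{j}A_{Y_{j}}^{c}$, and letting $s$ grow shows this intersection is infinite.

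Specialising to a single pair $\{X,Y\}$ and its four sign patterns then gives that $\{A_{X},A_{Y}\}$ is independent for all $X\neq Y$; in particular $A_{X}\setminus A_{Y}$ is infinite, so the classes $[A_{X}]$ are pairwise distinct, and each $A_{X}$ is infinite with infinite complement, hence a genuine element of $\F$. Since $X$ ranges over the continuum-many subsets of $\N$, the family $G=\{[A_{X}] : X\subseteq\N\}$ is uncountable and has the required incomparability property. I expect the only genuinely substantive point to be the reduction in the first paragraph --- recognising that incomparability with both $\Gamma$ and $\Gamma^{c}$ is precisely pairwise independence --- after which the construction is standard, its sole delicacy being the simultaneous separation of finitely many distinct sets by a single finite set $s_{0}$.
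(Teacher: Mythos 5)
Your proof is correct, but it takes a genuinely different route from the paper's. The paper proceeds by Zorn's lemma: it orders the collection of pairwise-incomparable families by inclusion, extracts a maximal element, and then shows by a diagonal construction that any countable family of infinite subsets of $\N$ (together with their complements) admits a new set meeting each of them and each complement infinitely often while avoiding each infinitely often, contradicting maximality of a countable family. You instead first translate the order $\leq$ into almost-inclusion $\subseteq^{*}$ (which is the right reading, since the sequences here are increasing enumerations of subsets of $\N$), observe that incomparability with both $\Gamma$ and $\Gamma^{c}$ is exactly the statement that all four Boolean atoms $\Lambda\cap\Gamma$, $\Lambda\setminus\Gamma$, $\Gamma\setminus\Lambda$, $(\Lambda\cup\Gamma)^{c}$ are infinite, i.e.\ pairwise independence, and then exhibit an independent family of size continuum by the Fichtenholz--Kantorovich construction on the countable index set of pairs $(s,\mathcal{B})$ with $s\subseteq\N$ finite and $\mathcal{B}\subseteq\P(s)$. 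The verification (separating finitely many distinct sets by a single finite $s_{0}$ chosen from pairwise symmetric differences, then letting $s\supseteq s_{0}$ grow) is sound, and the four needed conditions for a pair $\{A_{X},A_{Y}\}$ follow from the cases $p+q=2$ of independence; infiniteness and coinfiniteness of each $A_{X}$, and distinctness of the classes $[A_{X}]$, follow from the same computation. What your approach buys: it is explicit, avoids Zorn's lemma and the rather delicate bookkeeping of the paper's diagonalization, and yields a family of cardinality $2^{\aleph_{0}}$ rather than merely an uncountable one. What the paper's approach buys is the (unneeded for the lemma) additional information that \emph{every} maximal pairwise-incomparable family is uncountable. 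Your reduction to independence is the one substantive observation not present in the paper, and it is stated and justified correctly.
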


The above means that we can find a set $G$ such that given any two sequences of natural numbers in $\Lambda,\Gamma\in G$, one is frequently in the other sequence and its complement, i.e. $\Lambda\cap \Gamma$ and $\Lambda \cap \Gamma^c$ are both infinite.

\begin{proof}
Consider the set of subsets
\begin{align}
\mathcal{G} = \{G\subset \F: \text{no pair within $G$ is comparable according to} \leq \},
\end{align}
ordered by inclusion $\subset$. The above set is non-empty, which can be seen by taking $\Lambda = 2\N$ and $\Gamma = 4\N \cup (4\N+1)$. Every chain in $\mathcal{G}$ has an upper limit given by its union. By Zorn's lemma, there exists a maximal element. I claim that the maximal element has uncountably many elements. To prove the claim we first assume that the maximal element $\overline{G}\subset \mathcal{G}$ is countable or finite and show that it is always possible to extract an extra incomparable element.

To do so, I will show that given any countable or finite collection of infinite natural numbers $\{ c_i^j,i\in\N \}_{j\in\N}$ there is $\{c_i,i\in\N\}$ such that $\{c_i,i\in\N\} \cap \{c_i^j,i\in\N\}$ is infinite for all $j$ and $\{c_i,i\in\N\} \cap \{c_i^j,i\in\N\}<\{c_i^j,i\in\N\}$ according to the order previously established. Consider the isomorphism
\begin{align}
N\colon \F \to \{ 0,1 \}^\N, \{ c_i, i\in\N \} \mapsto \left( Nc_i = \begin{cases}
1 & \text{ if } i\in \{ c_k,k \in\N \} \\
0 & \text{ otherwise }
\end{cases} \right)_{i\in\N}
\end{align}
Practically speaking, we are replacing subsets of natural numbers to sequences that take values $1$ when the $i$-th number is in the set, $0$ otherwise.

Set initially $Nc_i=0$ for all $i$. At $i_1$ so that $Nc^1_{i_1}=1$ for the first time put $Nc_{i_1}=1$. We then iterate "diagonally" in the following way. At the $n$-th iteration find $i_{n+1}$ so that $Nc_{k_j}^j=1$ for all $1\leq j\leq n$ and some $i_n< k_{j-1} < k_j$ and $Nc^j_{t_j}=1$ for all $1\leq j\leq n+1$ and $k_n < t_{j-1} < t_j < t_{n+1} = i_{n+1}$. Set $Nc_{t_j}=1$ for all $1\leq j\leq n+1$. This procedure stops if the maximal set is finite, otherwise can be iterated countably many times.

This way we guarantee that $Nc_{k_j}=0$ for $i_n<k_j$ and $Nc_{k_j}^j=1$, which means that $Nc_i$ skips infinitely many $1$s from each sequence $(Nc_i^j)_{i\in \N}$, for all $j\in \N$. On the other side $Nc_{t_j}=1 = Nc^j_{t_j}$, $t_j\leq i_{n+1}$, so $Nc_i$ is also frequently in every sequence $(Nc_i^j)_{i\in \N}$.

Going back to our countable maximum element $\overline{G} = \big\{ \{ a_i^j,i\in\N \}, j\in\N \big\}$, we can apply the previous construction to find $\{c_i,i\in\N\} \in F$ generated by the countable family
\begin{equation}
\{ c_i^j,i\in \N\}_{j\in \N} = \Big\{ \{ a_i^j,i\in\N \}\ , \ \N\setminus \{ a_i^j,i\in\N \} \Big\}_{ j\in\N }
\end{equation}
Because $\{c_i,i\in\N \}$ is frequently and properly in $\{ a_i^j,i\in\N \}$ and its complement $\N\setminus \{ a_i^j,i\in\N \}$ for all $j$, then $\{c_i,i\in\N\}$ is not comparable to any member of the family and this contradicts the maximality of our set.
\end{proof}

We are now ready to prove \refp{prop_diff_seq}.

\begin{proof}
By the lemma we can find an uncountable set of uncomparable subsequences of $N$, call it $\overline{G}$. I claim that if $\Lambda,\Gamma\in \overline{G}, \Lambda\neq \Gamma$ then $f_\Lambda$ and $f_\Gamma$ have different recessions at infinity. Find $\{x_n\} \in e_{f_\Lambda}$ with $\{x_n\} \geq \{3^j\1, j\in \Lambda\}$, where the inequality could be strict given that there might be more zero points at infinity. Given our construction, we immediately have that $\{x_n\}\not\geq \{3^j \1,j\in \N\setminus \Lambda\}$ as $f_\Lambda(3^j\1)\geq c3^j$ for all $j\in \N\setminus \Lambda$. Also, $\Gamma$ intersects $\N\setminus \Lambda$ and $\Lambda$ infinitely many times, and vice versa, so that
\begin{align}
\limsup_n \frac{ f_\Gamma(x_n) }{ |x_n| } \geq \limsup_{j\in \Lambda \cap \Gamma} \frac{ f_\Gamma(3^j \1) }{ 3^j } \geq c, \quad \text{and} \quad \liminf_n \frac{ f_\Gamma(x_n) }{ |x_n| } \leq \liminf_{j\in \Lambda \cap \Gamma} \frac{ f_\Gamma(3^j \1) }{ 3^j } = 0,
\end{align}
which shows that $\{x_n\}\not\in e_{f_\Gamma}$. In terms of the non-separability, by the definition of $x_n$, we have
\begin{align}
\lim_n \frac{ f_\Lambda(x_n) }{ |x_n| }=0,
\end{align}
thus
\begin{align}
\|Tf_\Lambda-Tf_\Gamma\|_\infty \geq \limsup_n \left| \frac{ f_\Gamma(x_n) }{ |x_n| } - \frac{ f_\Lambda(x_n) }{ |x_n| } \right| = \limsup_n \left| \frac{f_\Gamma(x_n)}{|x_n|} \right| \geq c,
\end{align}
where $c$ is independent of $\Lambda$ or $\Gamma$.
\end{proof}

Given that the space is metric, having such a property prevents separability. We can end this section with the following corollary that incorporates higher dimensions:

\begin{corollary}
The set of quasi-convex functions having linear growth $f\colon \R^{m\times n}\to \R$ is separable in the topology induced by $\|T(\cdot)\|_\infty$ if and only if $min(m,n)=1$.
\end{corollary}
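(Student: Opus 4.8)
The plan is to prove the two implications separately, treating $\min(m,n)=1$ and $\min(m,n)\geq 2$ as distinct regimes.

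For the separable direction, suppose $\min(m,n)=1$, so that either $m=1$ or $n=1$ and $\R^{m\times n}$ is linearly isometric to $\R^d$ with $d=\max(m,n)$. In this regime I would invoke the classical fact that quasi-convexity coincides with convexity: when $m=1$ the Morrey condition $\fint_\Omega f(z+\nabla\phi)\,dx\geq f(z)$ over scalar test maps $\phi\in\D(\Omega)$ is exactly convexity of $f$, and likewise one-dimensional quasi-convexity ($n=1$) reduces to convexity. The key observation is then that a convex $f$ of linear growth automatically has a regular recession: for each fixed direction $z$ the quotient $t\mapsto f(tz)/t$ is monotone (difference quotients of a convex function) and bounded by the growth constant, so $f^\infty(z)=\lim_{t\to\infty}f(tz)/t$ exists, is convex and $1$-homogeneous, hence continuous, and convexity upgrades the convergence to local uniformity. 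Thus every such $f$ lies in $\E_1$, and the isometry $T$ sends $\E_1$ into $C(\overline{\B^d})$. Since $\overline{\B^d}$ is a compact metric space, $C(\overline{\B^d})$ is separable, and separability is hereditary for subsets of a metric space; therefore the convex (= quasi-convex) functions of linear growth form a separable set in the $\|T\cdot\|_\infty$ norm.

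For the non-separable direction, suppose $m,n\geq 2$. Here I would lift the uncountable $c$-separated family $\{f_\Lambda\}$ furnished by \refp{prop_diff_seq} on $\R^{2\times 2}$ to $\R^{m\times n}$ by the block extension $F_\Lambda(A):=f_\Lambda(A')$, where $A'=[A_{ij}]_{i,j\in\{1,2\}}$ is the top-left $2\times 2$ submatrix. Linear growth is immediate since $|A'|\leq|A|$. The crux is that $F_\Lambda$ stays quasi-convex, which I would establish by a slicing argument: testing quasi-convexity on the unit cube $Q=[0,1]^n$ against $\phi\in\D(Q,\R^m)$, one has $F_\Lambda(Z+D\phi)=f_\Lambda(Z'+B)$ with $B=[\partial_j\phi_i]_{i,j\in\{1,2\}}$. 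For each fixed $(x_3,\dots,x_n)$ the slice $(x_1,x_2)\mapsto(\phi_1,\phi_2)(x_1,x_2,x_3,\dots,x_n)$ is a compactly supported test map on $[0,1]^2$ whose $2$-dimensional Jacobian is exactly $B$, so quasi-convexity of $f_\Lambda$ gives $\int_{[0,1]^2}f_\Lambda(Z'+B)\,dx_1dx_2\geq f_\Lambda(Z')$; integrating in the remaining variables by Fubini yields $\fint_Q F_\Lambda(Z+D\phi)\geq F_\Lambda(Z)$. To transfer the separation constant I would embed the separating sequences $x_k\in\R^{2\times 2}$ from the proof of the preceding theorem into the top-left block $X_k$ (padding with zeros, so $|X_k|=|x_k|$ and $F_\Lambda(X_k)=f_\Lambda(x_k)$); the estimate then carries over verbatim, giving $F_\Lambda(X_k)/|X_k|\to 0$ while $\limsup_k F_\Gamma(X_k)/|X_k|\geq c$, whence $\|TF_\Lambda-TF_\Gamma\|_\infty\geq c$ for all $\Lambda\neq\Gamma$. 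This is an uncountable $c$-separated subset, contradicting separability, and together with the first part it pins the dichotomy at $\min(m,n)=1$.

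The main obstacle I anticipate is exactly the quasi-convexity of the block extension: rank-one convexity transfers trivially under projection to a submatrix, but quasi-convexity is genuinely nonlocal, so one cannot argue pointwise and must work with test functions directly. The Fubini slicing above is what makes it go through, and I would spell out carefully its two load-bearing points, namely that the Morrey inequality may be tested on any fixed domain (so a product cube is legitimate) and that the two-dimensional slices of a compactly supported $\phi$ are themselves admissible test maps.
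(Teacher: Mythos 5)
Your proposal is correct and follows essentially the same route as the paper: the $\min(m,n)=1$ case via quasi-convexity $=$ convexity and the resulting regular recession (hence landing in the separable space $C(\overline{\B^d})$), and the $\min(m,n)\geq 2$ case via the top-left $2\times 2$ block projection, Fubini slicing to preserve quasi-convexity, and zero-padding to transfer the uniform separation constant $c$ from the family $\{f_\Lambda\}$. Your write-up is merely more explicit about the two load-bearing points (monotonicity of the convex difference quotients, and admissibility of the two-dimensional slices as test maps) than the paper's own proof.
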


\begin{proof}
If $m$ or $n$ is 1, quasi-convex functions are convex and therefore the space is separable. This is because convex functions admit a limit at infinity in every direction; in this case, we actually recover the sphere compactification.

On the other side, for a function $g\colon \R^{2\times 2}\to \R$ we let
\begin{align}
gP \equiv g\circ P\colon\R^{n\times m} \to \R,\quad P\colon\R^{m\times n}\to \R^{2\times 2}, M\mapsto \begin{bmatrix}
M_{1,1}, M_{1,2} \\ M_{2,1}, M_{2,2}
\end{bmatrix}.
\end{align}
If $g$ is quasi-convex and locally bounded we let $\phi\in \D(Q,\R^m)$ and compute
\begin{align}
\int_Q gP(D\phi + z) = \int_{Q\subset \R^{n-2}} d\L^{n-2} \int_{[0,1]^2} dx_1x_2 g(PD\phi + Pz) \geq \int_{Q\subset \R^{n-2}} d\L^{n-2} gP(z) = gP(z),
\end{align}
i.e. $gP$ is quasi-convex. Then the set $f_\Lambda P$ is uncountable and
\begin{align}
\| T( f_\Lambda P - f_\Gamma P)\|_{\infty,\B^{m\times n}} = \|T(f_\Lambda - f_\Gamma)\|_{\infty,\B^{2\times 2}} \geq c
\end{align}
if $\Gamma\neq \Lambda$, so the space cannot be separable.
\end{proof}

Notice that separability is important to achieve both the Young Measure representation and for sequential compactness in the inherited weak star topology of Young Measures.

\subsection{Characterisation of Gradient Young Measures}

In this section, we characterise Gradient Young Measures (on separable compactification) via certain Jensen-like integral inequalities.

It is worth mentioning that the result for the sphere compactification, achieved in \cite{kristensen2010characterization}, can be easily improved in consideration of the fact that $\limsup_{t\to \infty} f(tz)t^{-1}$ is 1-homogeneous rank one convex, and so convex at points $rank(z)=1$ (see \citep{kirchheim2016rank}{528})). In what follows, we cannot use this type of auto-convexity. In our context, $f^\infty$ lives on a general compactification and convexity at points of rank one, as a Jensen's type inequality, is not necessarily true.

To prove our result, we will adopt the same strategy as in \cite{kristensen2019oscillation}.

Preliminarily to stating the theorem, we define the upper recession of a function relative to a general compactification.
\begin{definition}
Let $e_{f_i,i\in\N}$ be a separable compactification. For any $f$ having linear growth, we define
\begin{align}
f^{\sharp,e_{f_i,i\in\N}}((z_n)) = \sup_{(w_n)_n\in [(z_n)_n]} \limsup_n Tf(w_n),
\end{align}
where $(w_n)_n$ are sequences belonging to the equivalence class of $(z_n)_n$ within $e_{f_i,i\in \N}$.
\end{definition}

The reason why we introduce this notion is that the strategy for proving the characterisation theorem makes use of the trivial fact that $f\geq f^{qc}$, $f^{qc}$ being the quasi-convex envelope of $f$ (see, for example, \cite{dacorogna2007direct})
\begin{align}
f^{qc}(z) = \inf_{\phi\in \D(Q)} \int_Q f(z+D\phi(x))dx.
\end{align}
However, $f^{qc}$ does not need to live in the same class of separable quasi-convex functions, so the implication
\begin{align}
\lim_n \frac{f(z_n)}{|z_n|} \text{ exists } \Rightarrow \lim_n \frac{f^{qc}(z_n)}{|z_n|} \text{ exists}
\end{align}
could be false for some functions $f$.

Fix any compactification $e_{f_i,i\in\N}$ and a function $Tg \in e_{f_i,i\in\N}$. If $g\geq f$ then
\begin{align}
g^\infty((z_n))= \lim_n Tg(z_n) \geq \sup_{(z_n)\in [(z_n)]} \limsup_n Tf(z_n) = f^{\sharp,e_{f_i,i\in\N}}((z_n)).
\end{align}
$f^{\sharp,e_{f_i,i\in\N}}$ does not need to be continuous on $e_{f_i,i\in\N}$ with respect to its product topology. However, we can show that it is still upper semi-continuous.

\begin{lemma}
Let $e_{f_i,i\in \N}$ be a separable compactification and $f$ a function having linear growth, then $f^{\sharp,e_{f_i,i\in \N}}$ is upper semi-continuous on $\partial e_{f_i,i\in \N}$.
\end{lemma}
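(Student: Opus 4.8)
The plan is to verify upper semi-continuity sequentially, which is legitimate because $e_{f_i,i\in\N}$ is metrisable by \refp{oscillatingMeasure}; write $d$ for its metric and abbreviate $h := f^{\sharp,e_{f_i,i\in\N}}$. Note first that since $f$ has linear growth we have $Tf \in \BC(\B^d)$ with $\|Tf\|_\infty \leq C$, so every quantity $\limsup_n Tf(w_n)$, and hence $h$ itself, is finite and bounded by $C$. Fix $\xi \in \partial e_{f_i,i\in\N}$ and a sequence $\xi_k \to \xi$ with $\xi_k \in \partial e_{f_i,i\in\N}$; I must show $\limsup_k h(\xi_k) \leq h(\xi)$.

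The core is a diagonal construction. For each $k$, the definition of $h(\xi_k)$ as a supremum provides a sequence $(w^k_n)_n \subset \B^d$ representing $\xi_k$ (that is, $d(w^k_n,\xi_k) \to 0$ as $n\to\infty$) with $\limsup_n Tf(w^k_n) \geq h(\xi_k) - \tfrac1k$. Because this $\limsup$ exceeds $h(\xi_k)-\tfrac1k$, there are infinitely many indices $n$ with $Tf(w^k_n) \geq h(\xi_k) - \tfrac2k$; among these, using $d(w^k_n,\xi_k)\to 0$, I can select a single index $n(k)$ enjoying both $Tf(w^k_{n(k)}) \geq h(\xi_k) - \tfrac2k$ and $d(w^k_{n(k)},\xi_k) < \tfrac1k$. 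Set $W_k := w^k_{n(k)} \in \B^d$.

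By the triangle inequality $d(W_k,\xi) \leq d(W_k,\xi_k) + d(\xi_k,\xi) \to 0$, so $(W_k)_k$ is a sequence in $\B^d$ representing the point $\xi$, i.e. $(W_k)_k \in [\xi]$. Consequently it is an admissible competitor in the supremum defining $h(\xi)$, whence
\begin{align}
h(\xi) \geq \limsup_k Tf(W_k) \geq \limsup_k \Big( h(\xi_k) - \tfrac2k \Big) = \limsup_k h(\xi_k).
\end{align}
This is exactly upper semi-continuity of $h$ on $\partial e_{f_i,i\in\N}$.

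I expect the only delicate point to be the simultaneous control of all coordinates of the compactification during the diagonalisation: the index $n(k)$ must be chosen so that $w^k_{n(k)}$ is close to $\xi_k$ not merely in the $\B^d$-topology but in the full metric $d$ (equivalently, so that all the values $Tf_i(w^k_{n(k)})$ are simultaneously near those of $\xi_k$). This is handled automatically by phrasing the whole construction in terms of $d$ and invoking the triangle inequality, rather than working with the ball topology alone; metrisability from \refp{oscillatingMeasure} is precisely what legitimises both the sequential reduction and this estimate. The argument uses only approximate maximisers of the defining supremum, so no attainment or continuity of $h$ is needed — which is consistent with the fact that $h$ is merely upper semi-continuous and generally not continuous.
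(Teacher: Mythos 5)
Your proof is correct and follows essentially the same route as the paper's: reduce to sequential upper semi-continuity via metrisability, then run a diagonal selection that extracts one term from a representative of each $\xi_k$ close to $\xi_k$ in the full metric $d$, so that the resulting sequence $(W_k)_k$ is an admissible competitor in the supremum defining $h(\xi)$. If anything, your version is slightly more careful than the paper's at the point where a single term $w^k_{n(k)}$ with $Tf(w^k_{n(k)})\geq h(\xi_k)-2/k$ is produced — you explicitly pass to a near-maximising representative first, whereas the paper extracts such a term directly from the given representative of $\xi_k$, which tacitly assumes that representative already nearly attains the supremum.
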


\begin{proof}
Notice that $\partial e_{f_i,i\in \N}$ is metrisable, so it is enough to show sequential upper semi-continuity. Let $(z_n^j)_n\in \partial e_{f_i,i\in \N}$ so that $(z_n^j)_n \xrightarrow{j} (z_n)_n$. By the very definition of $g^{\sharp,e_{f_i,i\in \N}}( (z_n^j)_n)$, for fixed $\eps>0$ we can find $n_j \geq k_j$, where $k_j$ is a natural number to be selected, so that $g^{\sharp,e_{f_i,i\in \N}}( (z_n^j)_n) \leq \eps + Tg( (z^j_{n_j}) )$, where $(z^j_{n_j})$ is a constant sequence and belongs to $e_{f_i,i\in \N} \setminus \partial e_{f_i,i\in \N}$. We want to show that we can select $k_j$ so that $(z_{n_j}^j)_j$ belongs in the equivalence class of $(z_n)_n$. By applying the dominated convergence theorem we get
\begin{align}
\lim_j \sum_i \lim_n 2^{-i} |f_i(z_n^j)-f_i(z_n)|= 0 = \lim_j \lim_n \sum_i 2^{-i} |f_i(z_n^j)-f_i(z_n)|,
\end{align}
and so can find $k_j$ so that
\begin{align}
\sum_i 2^{-i} |f_i(z_n^j)-f_i(z_n)| \leq \eps_j \quad \forall n\geq k_j,
\end{align}
where $0\leq \eps_j \downarrow 0$. This shows that the above sequence $(z_{n_j}^j)_j \in [ (z_n)_n ]$ (the equivalence class), thus
\begin{align}
\limsup_j g^{\sharp,e_{f_i,i\in \N}} ( (z_n^j)_n ) \leq \eps + \limsup_j Tg( (z^j_{n_j}) ) \leq \eps + g^{\sharp,e_{f_i,i\in \N}}( (z_n)_n).
\end{align}
By the arbitrariness of $\eps>0$ we conclude upper semi-continuity of $g^{\sharp,e_{f_i,i\in \N}}$
\end{proof}

In particular, $g^{\sharp,e_{f_i,i \in \N}}$ is Borel measurable on $e_{f_i,i\in \N}$. The above statement can be generalised to extensions of functions over more general compact metric spaces, but this version suffices for our scopes.

We are interested in studying those Young Measures that are generated by gradients. So we define the following.
\begin{definition}
We say that $\nu\in Y(e_{f_i,i\in\N})$ is a (generalised) gradient Young Measure if there exists a sequence $u_j\in BV(\Omega,\R^m)$ such that
\begin{align}
Du_j \xrightarrow{Y(e_{f_i,i\in\N})} \big( \nu_x,\lambda,\nu_x^\infty \big).
\end{align}
We use $GY(e_{f_i,i\in\N})$ to refer to these subsets of Young Measures.
\end{definition}

The convergence of measure derivatives has not been fully comprehended yet and it is still the subject of active research. This means that it is not so clear how rich the above class is, and with which frequency gradients oscillate - or at least within the setting of weak* convergence of measures.

\begin{remark}
We can use the characterisation lemma for Young Measure on the sphere to show that the class is still quite vast. Indeed, by \citep{kristensen2010characterization}{541} Theorem 1, fix any $z=a\otimes b$ and $\nu^\infty\in \P(\partial \B)$ with $\overline{\nu^\infty}=z$. Then gradient Young Measure on the sphere
\begin{equation}
\big( \delta_0, \H^{n-1}\mres (B\cap b^\perp),\nu^\infty \big)
\end{equation}
satisfies the characterisation theorem from \cite{kristensen2010characterization}, with $u=a\chi_{x\cdot b\geq 0}$ and so it is generated by a sequence of gradients $Du_j\in BV( B_1(0)), B_1(0)\subset \R^n$. Because $Du_j$ is bounded in $BV$, we can find a subsequence $(u_{j_k})_{k \in \N}$ such that
\begin{equation}
Du_{j_k} \xrightarrow{Y(e_{f_i,i\in \N}), \text{ as } k\to \infty} \big( \delta_0,\H^{n-1}\mres (B\cap b^\perp), \eta^\infty \big),
\end{equation}
with clearly $\pi_{\partial \B} \eta^\infty = \nu^\infty$. Using \refp{disintegration_compactification_Y} to write $\eta^\infty = P_z d\nu^\infty, z\in \partial \B^d$, it remains an open question to understand how many probabilities $P_z$ over subsequences $z_n\to z$ can be generated by gradients.
\end{remark}

We now state the main theorem of this section.
\begin{theorem} \label{characterisation_general_cpt}
Let $\Omega\subset \R^n$ be a bounded Lipschitz domain and $e_{f_i,i\in\N}$ be a separable compactification of quasi-convex functions and consider a generalised Young Measure $\nu\in Y(e_{f_i,i\in\N})$ that satisfies $\lambda(\partial \Omega) = 0$.

Then $\nu\in GY(e_{f_i,i\in\N})$ is a Young Measure generated by a sequence
\begin{align}
(\phi_j\star (Du\mres \Omega) + Du_j) \xrightarrow{Y(e_{f_i,i\in\N})} \big( \nu_x,\lambda,\nu_x^\infty \big),
\end{align}
where $u\in BV(\Omega,\R^m)$, $u_j\in \D(\Omega,\R^m)$ and $\|u_j\|_1\to 0$, and $\phi_j$ is any sequence of mollifiers with $\phi_j\rightharpoonup^* \delta_0$, if and only if there is $u\in BV(\Omega,\R^m)$ such that
\begin{enumerate}
\item $\ll 1\otimes |\cdot |,\nu\gg <+ \infty$, and for all $f$ quasi-convex and having linear growth,
\item $f(\nabla u(x))dx \leq \langle \nu_x,f\rangle dx + \langle \nu_x^\infty,f^{\sharp,e_{f_i,i\in\N}} \rangle \frac{\lambda}{\L^n} dx$ and
\item $f^\infty(D^s u) \leq \langle \nu_x^\infty,f^{\sharp,e_{f_i,i\in\N}} \rangle d\lambda^s$.
\end{enumerate}
\end{theorem}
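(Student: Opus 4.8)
The plan is to prove the two implications separately: \emph{necessity} will follow from lower semicontinuity, while \emph{sufficiency} will rest on a Hahn--Banach separation combined with a localisation into homogeneous building blocks. I first record that the first-moment bound (1) is immediate, since any generating sequence $v_j=\phi_j\star(Du\mres\Omega)+Du_j$ is bounded in $L^1(\Omega,\R^{m\times n})$ and $\ll 1\otimes|\cdot|,\nu\gg=\lim_j\int_\Omega|v_j|\,d\L^n<\infty$.

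For conditions (2) and (3) I would fix a quasiconvex $f$ of linear growth and let $w_j$ be primitives of $v_j$, so that $w_j\rightharpoonup^* u$ in $BV$. The Ambrosio--Dal Maso/Fonseca--Muller lower semicontinuity of $w\mapsto\int_\Omega f(\nabla w)\,d\L^n+\int_\Omega f^\infty(D^sw)$ gives $\int f(\nabla u)+\int f^\infty(D^su)\le\liminf_j\int_\Omega f(v_j)$. Since $f$ need not lie in the separable algebra $A$, I would sandwich it from above by functions $g\in A$ with $g\ge f$, use that $\overline g$ is continuous on $e_{f_i,i\in\N}$ to pass to the Young-measure limit $\int\langle\nu_x,g\rangle\,d\L^n+\int\langle\nu_x^\infty,g^\infty\rangle\,d\lambda$, and finally minimise over $g$. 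The finite first moment forces $\langle\nu_x,g\rangle\downarrow\langle\nu_x,f\rangle$ when $g$ is taken equal to $f$ on large balls, while $g^\infty\ge f^{\sharp,e_{f_i,i\in\N}}$ pointwise produces $\langle\nu_x^\infty,f^{\sharp,e_{f_i,i\in\N}}\rangle$ on the concentration part. Splitting the resulting measure inequality along $\lambda=\tfrac{\lambda}{\L^n}\L^n+\lambda^s$, legitimate because $\lambda(\partial\Omega)=0$, separates the absolutely continuous statement (2) from the singular one (3).

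For sufficiency I would exploit that the set $GY(e_{f_i,i\in\N})$ of gradient Young measures with prescribed barycentre $Du$ is convex and weak\*-closed in $A^*$. Arguing by contradiction, if $\nu$ obeys (1)--(3) but is not such a Young measure, Hahn--Banach produces $\Phi\in A$ with $\ll\nu,\Phi\gg<\inf_{\mu\in GY_u}\ll\mu,\Phi\gg$. The central identity to establish is that this infimum equals the relaxed energy $\int_\Omega\Phi^{qc}(\nabla u)\,d\L^n+\int(\Phi^{qc})^\infty(D^su)$, obtained from the relaxation theory for linear-growth quasiconvex integrands and the density of elementary Young measures $\xi_{Dw}$, with $w$ sharing the trace of $u$, inside $GY_u$. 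Applying (2)--(3) to the quasiconvex function $\Phi^{qc}$ then gives $\int\Phi^{qc}(\nabla u)+\cdots\le\ll\nu,\Phi\gg$, contradicting the strict separation, so $\nu\in GY_u$. To exhibit a generator of the stated shape I would assemble it from local pieces via the decomposition and gluing lemmas, in particular $\refp{join_sequence_Y}$: the mollification $\phi_j\star(Du\mres\Omega)$ carries the barycentre and the singular concentration, while the $\D$-correction $u_j$ with $\|u_j\|_1\to0$ injects the oscillation profile $\nu_x$, built pointwise through the Kinderlehrer--Pedregal construction, together with the concentration angle measure at singular points.

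The hard part will be the recession mismatch anticipated before the statement. Since $\Phi^{qc}$ need not belong to the separable class generating $e_{f_i,i\in\N}$, the limit $\lim_n\Phi^{qc}(z_n)/|z_n|$ may fail to exist along the sequences $(z_n)$ that realise the compactification, so $(\Phi^{qc})^\infty$ cannot be tested directly against $\nu_x^\infty$ and must be replaced by $\Phi^{qc,\sharp,e_{f_i,i\in\N}}$. The crux is thus to arrange the separating integrand $\Phi$ so that the effective recession appearing in $\inf_{GY_u}\ll\cdot,\Phi\gg$ is controlled by $f^{\sharp,e_{f_i,i\in\N}}$ on the concentration part. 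Here I would use the upper semicontinuity of $f^{\sharp,e_{f_i,i\in\N}}$ on $\partial e_{f_i,i\in\N}$ proved above, together with the subsequence disintegration of $\nu_x^\infty$ from $\refp{disintegration_compactification_Y}$, so that integrating $f^{\sharp,e_{f_i,i\in\N}}$ against $\nu_x^\infty$ recovers exactly the supremal limsup over subsequences that governs the value of the separating functional.
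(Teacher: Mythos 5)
Your overall architecture (quasi-convexity/lower semicontinuity for necessity, Hahn--Banach separation plus localisation into homogeneous blocks and gluing via \refp{join_sequence_Y} for sufficiency) is the right one and matches the paper's strategy in outline, but the necessity direction as you propose it has a genuine gap. You want to bound $\liminf_j\int_\Omega f(v_j)\,dx$ from above by $\int\langle\nu_x,f\rangle\,dx+\int\langle\nu_x^\infty,f^{\sharp,e_{f_i,i\in\N}}\rangle\,d\lambda$ by sandwiching $f$ from above with majorants $g\in A$ and then minimising over $g$. But $A$ is a \emph{fixed} separable algebra, generated by $\{f_i\}$ together with the coordinate functions; for an arbitrary quasi-convex $f$ of linear growth there is no reason for $A$ to contain any majorant of $f$ beyond crude ones of the form $C(1+|z|)$, whose recession is the constant $C$ on all of $\partial e_{f_i,i\in\N}$. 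The infimum of $\langle\nu_x^\infty,g^\infty\rangle$ over majorants $g\in A$ can therefore be strictly larger than $\langle\nu_x^\infty,f^{\sharp,e_{f_i,i\in\N}}\rangle$, and the scheme does not close. The paper's device is different: since adding one continuous function preserves separability, one extracts a further subsequence along which $v_j$ generates a Young measure on the \emph{enlarged} compactification $e_{f,f_i,i\in\N}$, so that $\lim_j\int f(v_j)=\langle\nu^0,f\rangle+\langle\tilde\nu^\infty,f^\infty\rangle$ holds exactly; one then projects back using the fibre disintegration $\tilde\nu^\infty=P_{(z_n)_n}\,d\nu^\infty$ of \refp{disintegration_compactification_Y} and the pointwise bound $\int f^\infty\,dP_{(z_n)_n}\leq f^{\sharp,e_{f_i,i\in\N}}((z_n)_n)$. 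You invoke exactly this disintegration at the end of your proposal, but attach it to the sufficiency step; it is in the necessity step that it is indispensable, and it replaces (rather than supplements) the majorant argument.

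On sufficiency, your plan separates against the full inhomogeneous set $GY_u$ and rests on two unproved inputs: that $GY_u$ is convex and weak* closed in $A^*$, and that $\inf_{\mu\in GY_u}\ll\mu,\Phi\gg$ equals the relaxed energy of $\Phi^{qc}$ --- the latter being delicate here precisely because $\Phi^{qc}$ need not have a regular recession, so the relaxation formula you cite does not apply off the shelf. The paper avoids both issues by running the separation only in the homogeneous setting (where the cited lemma gives convexity and weak* closedness of $\mathcal{Y}$, and testing the separating functional against elementary measures $\eps_{z+Du}$ immediately yields the support value $\Phi^{qc}(z)$), and then performs the inhomogenization \emph{constructively}: Luzin's theorem, a dyadic covering, Kantorovich-metric estimates, and the homogeneous result on each cube produce the generator in the stated form $\phi_j\star(Du\mres\Omega)+Du_j$, with the singular and absolutely continuous pieces built separately and joined by \refp{join_sequence_Y}. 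Your closing remark that the generator should be ``assembled from local pieces'' is exactly this step, but in your scheme it is redundant with the abstract separation, whereas in the paper it \emph{is} the proof of sufficiency. As written, the proposal needs the enlargement-of-compactification argument for necessity and a justification (or replacement) of the two inputs above for sufficiency before it closes.
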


We can adjust the above theorem to fix the boundary of the converging sequence so that it's always equal to $u$ in the sense of trace.

\begin{lemma}
If $\nu\in Y(e_{f_i,i\in\N})$ is generated by a sequence
\begin{align}
\phi_j \star (Du\mres \Omega) + Du_j
\end{align}
as above, then there exists another sequence $v_j\in C^\infty(\Omega)\cap W_u^{1,1}(\Omega)$ such that
\begin{align}
D(v_j + u_j) \xrightarrow{Y(e_{f_i,i\in\N})} \nu.
\end{align}
In particular, $\nu\in GY(e_{f_i,i\in \N})$.
\end{lemma}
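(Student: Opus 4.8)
The plan is to reduce everything to a single $L^1$ estimate in a shrinking boundary layer and then invoke the fact that Young measures on a Lipschitz compactification are determined by their action on Lipschitz test functions. Since $e_{f_i,i\in\N}$ is a Lipschitz compactification, the lemma asserting that every $\nu\in Y(e_{f_i,i\in\N},\mu)$ is fixed by testing against products $\phi\otimes\psi$ with $\|\phi\|_{\mathrm{Lip}}\le1$ and $\|\psi\|_{\mathrm{Lip}(\R^d)}\le1$, together with \ref{Lipschitz_pullback} (which guarantees $\mathrm{Lip}(\psi)\le1$ for such $\psi$), reduces the statement to constructing $v_j\in C^\infty(\Omega)\cap W^{1,1}_u(\Omega)$ with
\begin{align}
\int_\Omega \big| Dv_j - \phi_j\star(Du\mres\Omega) \big| \,d\L^n \xrightarrow{j\to\infty} 0.
\end{align}
Indeed, for any admissible $\psi$ the difference of the two tested integrals is controlled by
\begin{align}
\left| \int_\Omega \phi\,\big[ \psi(Dv_j+Du_j) - \psi(\phi_j\star(Du\mres\Omega)+Du_j) \big]\,d\L^n \right| \le \|\phi\|_\infty \int_\Omega \big| Dv_j - \phi_j\star(Du\mres\Omega)\big|\,d\L^n \to 0.
\end{align}
As $(Dv_j+Du_j)$ is bounded in $L^1$, every subsequential Young-measure limit therefore agrees with $\nu$ on all Lipschitz products and so equals $\nu$; hence the whole sequence converges to $\nu$. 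Because $v_j$ and $u_j$ are genuine functions, $D(v_j+u_j)$ is an exact gradient, which is exactly what upgrades $\nu$ to $GY(e_{f_i,i\in\N})$. Note that $\lambda(\partial\Omega)=0$ is not used directly here beyond being inherited, since the perturbation $Du_j$ is left untouched.

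First I would fix a $BV$ extension $\tilde u\in BV(\R^n)$ of $u$ whose derivative charges no mass on the boundary, $|D\tilde u|(\partial\Omega)=0$, and whose interior trace equals that of $u$; such an extension exists because $\Omega$ is a bounded Lipschitz domain (a reflection extension works). Setting $w_j:=\phi_j\star\tilde u\in C^\infty$ one has $Dw_j=\phi_j\star D\tilde u$, and since $D\tilde u=Du$ on $\Omega$,
\begin{align}
Dw_j - \phi_j\star(Du\mres\Omega) = \phi_j\star\big( D\tilde u\mres(\R^n\setminus\Omega) \big).
\end{align}
This term vanishes at points whose distance to $\partial\Omega$ exceeds the mollification radius, so it is supported in a shrinking boundary layer, and a Fubini estimate bounds its $L^1(\Omega)$ norm by $|D\tilde u|\big(\{\,d(\cdot,\partial\Omega)<\mathrm{rad}(\phi_j)\,\}\big)\to |D\tilde u|(\partial\Omega)=0$. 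Thus $w_j$ already satisfies the desired $L^1$ bound; what it fails is the exact boundary condition, since its trace only converges to that of $u$.

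The hard part will be the trace correction. Writing $h_j:=\mathrm{tr}(w_j)-\mathrm{tr}(u)$, continuity of the trace under strict convergence gives $\|h_j\|_{L^1(\partial\Omega)}\to0$; I would then subtract a lifting $e_j\in W^{1,1}(\Omega)$ with $\mathrm{tr}(e_j)=h_j$ and $\|e_j\|_{W^{1,1}(\Omega)}\le C\|h_j\|_{L^1(\partial\Omega)}\to0$, obtained from a bounded right inverse of the surjective trace operator $W^{1,1}(\Omega)\to L^1(\partial\Omega)$ on the Lipschitz domain, and finally smooth $e_j$ in the interior by a boundary-adapted (Whitney-type) mollification that preserves the trace and perturbs the $W^{1,1}$ norm negligibly. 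Then $v_j:=w_j-e_j\in C^\infty(\Omega)\cap W^{1,1}_u(\Omega)$ and
\begin{align}
\int_\Omega \big|Dv_j-\phi_j\star(Du\mres\Omega)\big| \le \int_\Omega\big|Dw_j-\phi_j\star(Du\mres\Omega)\big| + \int_\Omega|De_j| \to 0,
\end{align}
which closes the argument. The two delicate points are therefore the existence of this vanishing-norm lifting together with its interior smoothing without disturbing the prescribed trace, and the boundary-layer estimate above; both rest essentially on the choice of an extension with $|D\tilde u|(\partial\Omega)=0$.
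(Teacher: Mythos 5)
Your proposal is correct, and its skeleton coincides with the paper's: both arguments reduce the claim, via the Lipschitz-compactification determination lemma and \thref{Lipschitz_pullback}, to producing $v_j\in C^\infty(\Omega)\cap W^{1,1}_u(\Omega)$ with $\int_\Omega|Dv_j-\phi_j\star(Du\mres\Omega)|\,d\L^n\to 0$, after which the $1$-Lipschitz bound $|\psi(Dv_j+Du_j)-\psi(\phi_j\star(Du\mres\Omega)+Du_j)|\le|Dv_j-\phi_j\star(Du\mres\Omega)|$ and $L^1$-boundedness of the sum force every subsequential Young-measure limit to equal $\nu$. Where you diverge is in how $v_j$ is obtained. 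The paper simply invokes the known construction (Lemma~1 of \cite{kristensen2010characterization}) of a strictly convergent sequence in $C^\infty(\Omega)\cap W^{1,1}_u(\Omega)$ that can be arranged to coincide with $\phi_j\star(Du\mres\Omega)$ on $\Omega_{-\eps}$, so the $L^1$ discrepancy lives in the collar $\Omega\setminus\Omega_{-\eps}$, is bounded by $2|Du|(\Omega\setminus\Omega_{-\eps})$ through strict convergence, and is removed by a diagonal argument in $\eps$ and $j$. You instead build $v_j$ from scratch: mollify a $BV$ extension $\tilde u$ with $|D\tilde u|(\partial\Omega)=0$, observe that $Dw_j-\phi_j\star(Du\mres\Omega)=\phi_j\star(D\tilde u\mres(\R^n\setminus\Omega))$ has $L^1(\Omega)$ norm at most $|D\tilde u|(\{d(\cdot,\partial\Omega)<\mathrm{rad}(\phi_j)\})\to 0$, then restore the trace by subtracting a Gagliardo lifting $e_j$ of the trace defect with $\|e_j\|_{W^{1,1}}\le C\|h_j\|_{L^1(\partial\Omega)}\to 0$ (noting that only a bounded \emph{nonlinear} right inverse of the trace exists for $W^{1,1}$, which is all you need), smoothed in the interior without changing the trace. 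Your route is self-contained and avoids the diagonal argument, at the cost of having to justify three standard but nontrivial ingredients (the extension with $|D\tilde u|(\partial\Omega)=0$ on a Lipschitz domain, continuity of the trace under strict convergence, and the trace-preserving interior smoothing); the paper's route is shorter but outsources exactly this work to the cited lemma. Both are valid.
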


\begin{proof}
We can find $u_j\to u$ strictly in $BV(\Omega)$ with $u_j\in C^\infty(\Omega)\cap W^{1,1}_u(\Omega)$, see for example \cite{kristensen2010characterization} Lemma 1 for a proof of this fact. In the construction of the $u_j$s just mentioned, it is possible to select $\phi_j\star (Du\mres\Omega)$ on $\Omega_{-\eps}$ for $j$ big enough, $\phi_j$ as in \refp{characterisation_general_cpt}. Also, without loss of generality, we can assume that $|Du|(\partial \Omega_{-\eps}) = |Du_j|(\partial \Omega_{-\eps}) =0$. Because the $f_i$s are all Lipschitz, it is enough to test against $f\in Lip(\R^{m\times n})$ with $Lip(f)\leq 1$. We then compute
\begin{align}
\int_\Omega |f(\phi_j\star (Du\mres \Omega)+Dv_j)- f(Du_j+Dv_j)|dx \leq & \int_\Omega |\phi_j\star (Du\mres \Omega) - Du_j| \\
\leq & \int_{\Omega\setminus \Omega_{-\eps}} |\phi_j\star (Du\mres \Omega)| + |Du_j| = I_j + II_j
\end{align}
By strict convergence of both integrands, we have that
\begin{align}
\limsup_j I_j + II_j \leq 2 |Du|(\Omega\setminus \Omega_{-\eps}),
\end{align}
and so use a diagonal argument to conclude the existence and equality of the limit Young Measure.
\end{proof}

It's implicit in \refp{characterisation_general_cpt} that
\begin{align}
Du = \overline{\nu} = \langle\nu_x,\cdot\rangle dx + \langle \nu_x^\infty,\cdot\rangle d\lambda.
\end{align}
Also, the above inequalities can be written in the sense of distribution, in the form
\begin{align}
& \int_\Omega \phi(x) \langle\nu_x,f\rangle dx + \int_\Omega \phi(x) \int_{\partial e_{f_i,i\in\N}} f^{\sharp,e_{f_i,i\in\N}} d\nu_x^\infty d\lambda \\
& \qquad \geq \int_\Omega \phi(x)f(\nabla u(x))dx + \phi(x)f^\infty \left( \frac{D^su}{|D^su|} \right) d|D^su|
\end{align}
for all $\phi\in \D(\Omega),\phi\geq 0$.

To prove the characterisation result we will follow the same strategy as in \cite{kristensen2019oscillation}. We initially prove the result for homogeneous gradient Young Measures and then extend the theorem to the inhomogeneous case. Notice that Young Measures that act on functions $f=f(z)$ that only depend on $z$ can be represented by
\begin{align}
\int_\Omega \langle \nu_x,f\rangle dx + \int_{\overline{\Omega}} \langle \nu_x^\infty,f^\infty\rangle d\lambda = \int_\Omega fd\nu^0 + \int_{\partial e_{f_i,i\in\N}} f^\infty d\nu^\infty,
\end{align}
where for $e_{f_i,i\in\N}$ the separable compactification that extends $f$,
\begin{align}
\nu^0 = \nu_x d\L^n \in \M^+(\Omega) \quad \text{and} \quad \nu^\infty = \nu_x^\infty d\lambda \in \M^+(\partial e_{f_i,i\in\N}).
\end{align}
The (push-forward) Kantorovich metric is then
\begin{align}
\| (\nu^0,\nu^\infty) \|_K = \sup_{\Phi\in H,\|T\Phi\|_{Lip(e_{f_i,i\in\N})} \leq 1} \left| \int_{\R^d} \Phi d\nu^0 + \int_{e_{f_i,i\in\N}} \Phi^\infty d\nu^\infty \right|.
\end{align}

For $z\in \R^d$ we let $\mathcal{Y}$ be the set of pairs $\big( \nu^0,\nu^\infty \big)\in \M_1^+(\R^d)\times \M^+(e_{f_i,i\in\N})$ such that there is a sequence $u_j\in \D(Q,\R^m)$, where $Q$ is the unit cube, so that $z+Du_j\xrightarrow{Y(e_{f_i,i\in\N})} \big( \nu^0,\nu^\infty \big)$ and $\|u_j\|_1\to 0$. The following proposition follows from obvious variations of the proofs contained in \citep{kristensen2019oscillation}{8}, lemmas 3.7,3.8,3.9. The proofs are essentially the same as they only use the separability of the compactification.
\begin{lemma}
The family $\{ \eps_{z+Du}:u\in \D(Q,\R^m)\}$ is weakly* dense in $\mathcal{Y}$, and $\mathcal{Y}$ is a weak* closed and convex subset of homogeneous Young Measures.
\end{lemma}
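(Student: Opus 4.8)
The plan is to exploit the separability of $e_{f_i,i\in\N}$ throughout. By \refp{compatificationSeparable} this makes $C(e_{f_i,i\in\N})$ separable, so weak* convergence is metrisable on the mass-bounded sets where all the relevant homogeneous Young measures live; the push-forward Kantorovich metric recalled just above the lemma is one such metric. Consequently I may work with sequences and freely diagonalise. First I would check that each $\eps_{z+Du}$, for $u\in\D(Q,\R^m)$, actually lies in $\mathcal{Y}$. As $u$ is smooth with compact support, $Du$ is bounded, so $z+Du$ carries no concentration and $\eps_{z+Du}=\big((z+Du)_\#\L^n\mres Q,\,0\big)$. To obtain a generating sequence of vanishing $L^1$-norm I set $u_j(x)=j^{-1}\tilde u(jx)$, with $\tilde u$ the $Q$-periodic extension of $u$, cut off in a boundary layer of width $o(1)$ so that $u_j\in\D(Q,\R^m)$; then $\|u_j\|_1=O(j^{-1})\to 0$ while $Du_j(x)=D\tilde u(jx)$ off the negligible layer, and the Riemann--Lebesgue lemma yields $\int_Q\Phi(z+Du_j)\to\int_Q\Phi(z+Du)$ for every $\Phi$ of linear growth, the recession contributing nothing since the sequence stays bounded. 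Hence $\eps_{z+Du_j}\rightharpoonup^*\eps_{z+Du}$ and $\eps_{z+Du}\in\mathcal{Y}$. Density is then immediate: by the very definition of $\mathcal{Y}$, any $(\nu^0,\nu^\infty)\in\mathcal{Y}$ is a weak* limit of elementary measures $\eps_{z+Du_j}$, each of which belongs to $\mathcal{Y}$ by the step just performed.

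For convexity, given $(\nu_1^0,\nu_1^\infty),(\nu_2^0,\nu_2^\infty)\in\mathcal{Y}$ with generating sequences $u_j^1,u_j^2$ and $\theta\in(0,1)$, I would partition $Q$, up to a set of small measure, into finitely many congruent subcubes and colour a subcollection of total volume close to $\theta$ with the label $1$ and the rest with $2$. On a subcube of side $r$ centred at $c$ I insert the rescaled copy $x\mapsto r\,u_j^{\ell}\big((x-c)/r\big)$; since each $u_j^\ell$ vanishes near $\partial Q$, these patch into a single $w_j\in\D(Q,\R^m)$ with $\|w_j\|_1=O(r)\,\|u_j^\ell\|_1\to 0$. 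On the cubes of colour $\ell$ the gradient $z+Dw_j$ has exactly the value distribution of $z+Du_j^\ell$, so testing against the countably many $f_i$ and diagonalising in $j$ against a refinement of the partition gives $z+Dw_k\xrightarrow{Y(e_{f_i,i\in\N})}\theta(\nu_1^0,\nu_1^\infty)+(1-\theta)(\nu_2^0,\nu_2^\infty)$, whence the convex combination lies in $\mathcal{Y}$.

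For weak* closedness let $(\nu_k^0,\nu_k^\infty)\in\mathcal{Y}$ converge weak* to $(\nu^0,\nu^\infty)$; the sequence lies in a bounded, hence metrisable, set, with metric $d$. Each $(\nu_k^0,\nu_k^\infty)$ admits a generating sequence, so I pick $j(k)$ with $d\big(\eps_{z+Du_{j(k)}^k},(\nu_k^0,\nu_k^\infty)\big)<1/k$ and $\|u_{j(k)}^k\|_1<1/k$; the triangle inequality then gives $\eps_{z+Du_{j(k)}^k}\rightharpoonup^*(\nu^0,\nu^\infty)$ with vanishing $L^1$-norm, so $(\nu^0,\nu^\infty)\in\mathcal{Y}$.

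The main obstacle is the convexity construction, and within it the bookkeeping of the concentration part $\nu^\infty$: one must verify that the rescaled gluing reproduces not only the bulk measure $\nu^0$ on $\R^d$ but also the boundary measure on $\partial e_{f_i,i\in\N}$, that is, that the limiting behaviour of $Tf_i(z+Dw_j)$ along each equivalence class of sequences is preserved under the patching. This is precisely where separability is essential — countably many test functions $f_i$ together with metrisability — and it is why the argument reduces to the sphere-compactification proofs of \cite{kristensen2019oscillation} with only cosmetic changes.
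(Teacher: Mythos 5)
Your argument is correct and is essentially the proof the paper has in mind: the paper gives no details of its own, deferring to Lemmas 3.7--3.9 of \cite{kristensen2019oscillation}, and your three steps --- periodic rescaling with Riemann--Lebesgue to show each $\eps_{z+Du}$ lies in $\mathcal{Y}$ (density then being definitional), cube-patching of rescaled copies for convexity, and metrisability plus diagonalisation for closedness --- are exactly the content of those lemmas, with separability of $e_{f_i,i\in\N}$ invoked in the same places. The one point worth making explicit is that $\mathcal{Y}$ is not norm-bounded (the mass of $\nu^\infty$ is unconstrained), so upgrading the sequential closedness you prove to the genuine weak* closedness needed for the Hahn--Banach separation in \refp{result_homogeneous} requires Krein--\v{S}mulian, i.e.\ intersecting $\mathcal{Y}$ with closed balls, on each of which your metric argument applies.
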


We can now prove the main theorem in case $\big( \nu^0,\nu^\infty \big)$ is a homogeneous gradient Young Measure.

\begin{proposition} \label{result_homogeneous}
Let $\nu = \big( \nu^0,\nu^\infty \big) \in M^+_1(\Omega)\times \M^+(\partial e_{f_i,i\in\N})$ and $z\in \R^{m\times n}$. Then $\nu \in \mathcal{Y}$ if and only if there is $z\in \R^{m\times n}$ such that
\begin{align}
\int_{\R^{m\times n}} fd\nu^0 + \int_{\partial e_{f_i,i\in\N}} f^{\sharp,e_{f_i,i\in\N}} d\nu^\infty \geq f(z)
\end{align}
for all $f\colon\R^{m\times n}\to \R$ quasi-convex and having linear growth.
\end{proposition}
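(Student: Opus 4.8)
The plan is to prove both implications by leaning on the structural facts recorded in the preceding lemma: $\mathcal{Y}$ is a weak* closed, convex subset of the homogeneous Young measures in which the elementary measures $\{\eps_{z+Du}:u\in\D(Q,\R^m)\}$ are weak* dense.

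\textbf{Sufficiency.} Assuming the Jensen inequality for every quasi-convex $f$ of linear growth, I would argue by contraposition. If $\nu\notin\mathcal{Y}$, then since $\mathcal{Y}$ is weak* closed and convex, Hahn--Banach separation produces a predual element, i.e.\ a function $\Phi\in A$ with $T\Phi$ Lipschitz on $e_{f_i,i\in\N}$, such that $\ll\nu,\Phi\gg<\inf_{\eta\in\mathcal{Y}}\ll\eta,\Phi\gg$. By weak* density of the elementary gradient Young measures together with $\ll\eps_{z+Du},\Phi\gg=\int_Q\Phi(z+Du)\,dx$, the right-hand infimum equals $\inf_{u\in\D(Q)}\int_Q\Phi(z+Du)\,dx=\Phi^{qc}(z)$. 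On the other hand $\ll\nu,\Phi\gg=\int\Phi\,d\nu^0+\int\Phi^\infty\,d\nu^\infty$, and since $\Phi\ge\Phi^{qc}$ I get $\int\Phi\,d\nu^0\ge\int\Phi^{qc}\,d\nu^0$ and, via the recession comparison $\Phi\ge\Phi^{qc}\Rightarrow\Phi^\infty\ge(\Phi^{qc})^{\sharp,e_{f_i,i\in\N}}$ noted before the theorem, $\int\Phi^\infty\,d\nu^\infty\ge\int(\Phi^{qc})^{\sharp,e_{f_i,i\in\N}}\,d\nu^\infty$. Because $\Phi^{qc}$ is quasi-convex of linear growth, the hypothesis applied to it yields $\int\Phi^{qc}\,d\nu^0+\int(\Phi^{qc})^{\sharp,e_{f_i,i\in\N}}\,d\nu^\infty\ge\Phi^{qc}(z)$, which contradicts the strict separation. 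Hence $\nu\in\mathcal{Y}$.

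\textbf{Necessity.} Now suppose $\nu\in\mathcal{Y}$, generated by $z+Du_j$ with $u_j\in\D(Q,\R^m)$ and $\|u_j\|_1\to0$, and fix a quasi-convex $f$ of linear growth. Quasi-convexity gives $f(z)\le\int_Q f(z+Du_j)\,dx$ for every $j$, hence $f(z)\le\liminf_j\int_Q f(z+Du_j)\,dx$, so it is enough to bound the limit from above by $\int f\,d\nu^0+\int f^{\sharp,e_{f_i,i\in\N}}\,d\nu^\infty$. For this I would dominate $f$ from above within the algebra: for any $g\in A$ with $g\ge f$, Young-measure convergence gives $\limsup_j\int_Q f(z+Du_j)\,dx\le\lim_j\int_Q g(z+Du_j)\,dx=\int g\,d\nu^0+\int g^\infty\,d\nu^\infty$, after which I minimise over such $g$.

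The main obstacle is the identity $\inf_{g\in A,\,g\ge f}\bigl[\int g\,d\nu^0+\int g^\infty\,d\nu^\infty\bigr]=\int f\,d\nu^0+\int f^{\sharp,e_{f_i,i\in\N}}\,d\nu^\infty$. The inequality ``$\ge$'' is immediate from $g\ge f$ and $g^\infty\ge f^{\sharp,e_{f_i,i\in\N}}$; the real content is ``$\le$'', namely constructing near-optimal dominating integrands. Here I would use that $f^{\sharp,e_{f_i,i\in\N}}$ is upper semi-continuous on the compact metric boundary $\partial e_{f_i,i\in\N}$ (the lemma above) to choose continuous $h_k\downarrow f^{\sharp,e_{f_i,i\in\N}}$, extend them continuously to $e_{f_i,i\in\N}$ and pull them back through $T^{-1}$ to linear-growth integrands with recession $h_k$, then glue with $f$ on large balls by a maximum together with a truncation keeping the result $\ge f$ while controlling the transition through linear growth. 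Monotone and dominated convergence, using the moment bound $\int|z|\,d\nu^0<\infty$, then send $\int g_k\,d\nu^0\to\int f\,d\nu^0$ and $\int g_k^\infty\,d\nu^\infty\to\int f^{\sharp,e_{f_i,i\in\N}}\,d\nu^\infty$. Combining the two estimates closes the necessity direction and completes the proof.
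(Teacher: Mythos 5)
Your proof is correct, and the sufficiency direction coincides with the paper's: both are the Hahn--Banach argument (you phrase it as separation from the closed convex set $\mathcal{Y}$, the paper writes $\mathcal{Y}=\bigcap H$ over supporting half-spaces), both reduce the infimum over $\mathcal{Y}$ to elementary measures $\eps_{z+Du}$ to identify the threshold with $\Phi^{qc}(z)$, and both close via $\Phi\geq\Phi^{qc}$, the recession comparison $\Phi^\infty\geq(\Phi^{qc})^{\sharp,e_{f_i,i\in\N}}$, and the hypothesis applied to $\Phi^{qc}$. The necessity direction is where you genuinely diverge. The paper passes to the enlarged compactification $e_{f,f_i,i\in\N}$, extracts a subsequence so that $z+Du_j$ generates a Young measure there, obtains $\int f\,d\nu^0+\int f^\infty\,d\tilde{\nu}^\infty\geq f(z)$ exactly, and then projects back using \thref{disintegration_compactification_Y}: since $f^\infty\leq f^{\sharp,e_{f_i,i\in\N}}$ on each fiber $P_{(z_n)_n}$, the bound $\int f^\infty\,d\tilde{\nu}^\infty\leq\int f^{\sharp,e_{f_i,i\in\N}}\,d\nu^\infty$ is immediate from the definition of $f^\sharp$ as a supremum over the equivalence class. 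You instead stay inside the fixed compactification and dominate $f$ by integrands $g\in A$ with $g\geq f$, which forces you to prove that the infimum of $\int g\,d\nu^0+\int g^\infty\,d\nu^\infty$ over such $g$ does not exceed $\int f\,d\nu^0+\int f^{\sharp,e_{f_i,i\in\N}}\,d\nu^\infty$. Your sketch of that construction is viable --- upper semi-continuity gives continuous $h_k\downarrow f^{\sharp,e_{f_i,i\in\N}}$ on $\partial e_{f_i,i\in\N}$, a Tietze extension pulled back through $T^{-1}$ gives a candidate with recession $h_k$, a compactness argument shows it dominates $Tf$ up to $\eps$ near the boundary, and the max-plus-truncation on large balls together with the moment condition recovers $\int f\,d\nu^0$ in the limit --- but it is a real construction with several parameters ($\eps$, $k$, the truncation radius) to send to their limits in the right order, and it is only sketched where the burden actually lies. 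The paper's enlargement trick buys you all of this for free, at the price of invoking the structure results (that $\nu^0$ and $\lambda$ are unchanged under enlargement, and the fiberwise disintegration of the angle measure); your route is self-contained within one compactification and makes the role of upper semi-continuity of $f^\sharp$ explicit, but you should either carry out the gluing in full or observe that passing to $e_{f,f_i,i\in\N}$ lets you bypass it entirely.
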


\begin{proof}
Suppose that $\nu\in \mathcal{Y}$ and let $z+Du_j, u_j\in \D(Q,\R^m)$ be the generating sequence, i.e. for all $\Phi\in T^{-1} e_{f_i,i\in\N}$,
\begin{align}
\int_Q \Phi(z+Du_j)dx \to \langle \nu,\Phi\rangle = \int_{\R^{m\times n}} \Phi d\nu^0 + \int_{\partial e_{f_i,i\in\N}} \Phi d\nu^\infty.
\end{align}
Fix an arbitrary $f$ having linear growth and quasi-convex and let $e_{f,f_i,i\in\N}$ the bigger compactification. Upon extracting a subsequence we have that
\begin{align}
z+Du_j \xrightarrow{Y(e_{f,f_i,i\in\N})} \big( \nu^0,\tilde{\nu}^\infty \big),
\end{align}
where we identify the gradient Young Measure with its tensor products as $f=f(z)$. By quasi-convexity, we have
\begin{align}
\int_{\R^{m\times n}} fd\nu^0 + \int_{\partial e_{f,f_i,i\in\N}} f^\infty d\tilde{\nu}^\infty = \limsup_j \int_Q f(z+Du_j)dx \geq f(z).
\end{align}
On the other side, using the decomposition of angle concentration Young Measure \refp{disintegration_compactification_Y} we also obtain that
\begin{align}
\int_{\partial e_{f,f_i,i\in\N}} f^\infty d\tilde{\nu}^\infty = \int_{\partial e_{f_i,i\in\N}} \int f^\infty dP_{(z_n)_n} d\nu^\infty \leq \int_{\partial e_{f_i,i\in\N}} f^{\sharp,e_{f_i,i\in\N}} d\nu^\infty.
\end{align}

For the other implication, because $\mathcal{Y}$ is weakly* closed and convex, we can write $\mathcal{Y}=\cap H$ where $H$ are half-spaces containing $\mathcal{Y}$, which can be written as
\begin{align}
H = \{l\in H^*:l(\Phi) \geq t\}.
\end{align}
In particular, we can test the above inequality against $\eps_{z+Du}$ and get
\begin{align}
t\leq \eps_{z+Du}(\Phi) \leq \int_Q \Phi(z+Du)dx
\end{align}
for all $u\in \D(Q,\R^m)$. Passing to the infimum over all such $u$s we deduce $t\leq \Phi^{qc}(z)$ and so
\begin{align}
\langle \nu,\Phi\rangle = & \int_{\R^{m\times n}} \Phi d\nu^0 + \int_{\partial e_{f_i,i\in\N}} \Phi^\infty d\nu^\infty \\
\\ \geq & \int_{\R^{m\times n}} \Phi^{qc} d\nu^0 + \int_{\partial e_{f_i,i\in\N}} (\Phi^{qc})^{\sharp,e_{f_i,i\in\N}} d\nu^\infty \geq \Phi^{qc}(z) \geq t
\end{align}
which shows that $\nu\in H$.
\end{proof}

\subsubsection{Inhomogenization}

In what follows, we will prove a semi-approximation result for the absolutely continuous and singular parts separately and then put them together via \refp{join_sequence_Y}. In each case, we will use a covering argument to boil it down to the homogeneous case, which was solved in the above section.

Consider a standard mollifier $\phi_t(x) = t^{n-1} \phi(\frac{x}{t})$, where $\phi\in \D(Q)$ and let $M=\|D\phi\|_\infty$. Also, unless otherwise specified, the norm on $\R^n$ is the maximum norm $\|x\|=\max_i |x_i|$.

\begin{lemma}
Given $\eps>0$ there is $t_\eps>0$ and a family $\varphi_t\in \D(\Omega,\R^m)$ with $\|\varphi_t\|_1 \leq \eps$ so that
\begin{align}
\left| \int_\Omega \eta \Phi(0) + \eta \langle \Phi^\infty,\nu_x^\infty\rangle d\lambda^s - \int_\Omega \eta \Phi(\phi_t\star (\overline{\nu^\infty} d\lambda^s) + D\varphi_t)dx \right| < \eps
\end{align}
for all $t\in (0,t_\eps)$ uniformly in $\|\eta\|_{Lip} \leq 1$ and $\|T\Phi\|_{Lip,graph(f)}\leq 1$.
\end{lemma}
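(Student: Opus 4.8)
The plan is to reduce the inhomogeneous singular statement to the homogeneous one of \refp{result_homogeneous} by a blow-up and covering argument. Since $\lambda^s\perp\L^n$, fix a Borel set $N$ carrying $\lambda^s$ with $\L^n(N)=0$. At $\lambda^s$-almost every $x_0\in N$ the map $x\mapsto\nu_x^\infty$ is approximately continuous and $x_0$ is a density point of $\lambda^s$; moreover the mollified field $g_t:=\phi_t\star(\overline{\nu^\infty}\,d\lambda^s)$ averages, over cubes at the appropriate scale, to roughly $c(x_0)\,\overline{\nu_{x_0}^\infty}$, where $c(x_0)$ is the local density of $\lambda^s$. First I would use the Besicovitch covering theorem to extract finitely many pairwise disjoint cubes $Q_i=Q(x_i,r_i)\subset\Omega$ with $\lambda^s(\Omega\setminus\bigcup_i Q_i)<\eps$ such that on each $Q_i$ the angle measure $\nu_x^\infty$ is within $\eps$, in the Kantorovich metric, of the constant $\nu_{x_i}^\infty$, and $g_t$ is close to the constant matrix $z_i:=c_i\,\overline{\nu_{x_i}^\infty}$ with $c_i=\lambda^s(Q_i)/\L^n(Q_i)$.

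On the reference cube $Q$, since $\nu$ is a gradient Young Measure its slices $\nu_{x_i}^\infty$ satisfy the Jensen inequality against quasi-convex integrands, so \refp{result_homogeneous} supplies $w^i_k\in\D(Q,\R^m)$ with $\|w^i_k\|_1\to0$ generating the homogeneous measure $(\delta_0,c_i\nu_{x_i}^\infty)$ with barycentre $z_i$. I would rescale these into the cubes, setting $\varphi_t:=\sum_i r_i\,w^i_{k(t)}((\,\cdot\,-x_i)/r_i)$ on $\bigcup_i Q_i$ and $0$ elsewhere, so that $D\varphi_t$ carries the profile of $Dw^i_{k(t)}$ inside each $Q_i$ and $\|\varphi_t\|_1\le\sum_i r_i^{\,n+1}\|w^i_{k(t)}\|_1\le\eps$ once $k(t)$ and the scales are locked. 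The added gradient $D\varphi_t$ upgrades the Dirac concentration in the direction of the barycentre, produced by $g_t$ alone, into the full angle measure $\nu_{x_i}^\infty$. Testing the difference against $\eta\Phi$, I would invoke that $T^{-1}$ is bounded on Lipschitz functions (\refp{Lipschitz_pullback}) together with $\|T\Phi\|_{Lip}\le1$ and $\|\eta\|_{Lip}\le1$ to obtain estimates uniform in the test pair; the contribution of the uncovered set is controlled by $\lambda^s(\Omega\setminus\bigcup Q_i)<\eps$, the error from replacing $\nu_x^\infty$ by $\nu_{x_i}^\infty$ and $g_t$ by $z_i$ is $O(\eps)$, and the local Young-measure convergence on each cube handles the rest.

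The main obstacle will be the simultaneous matching of the three scales: the mollification parameter $t$, the cube sizes $r_i$, and the index $k(t)$ of the generating sequences. One must choose them so that (i) $g_t$ genuinely looks affine and equal to $z_i$ on each $Q_i$, (ii) the concentration mass produced inside $Q_i$ equals $\langle\nu_{x_i}^\infty,|\cdot|\rangle\lambda^s(Q_i)$ rather than merely $|z_i|\L^n(Q_i)=|\overline{\nu_{x_i}^\infty}|\lambda^s(Q_i)$, the deficit being supplied precisely by the spreading effect of $D\varphi_t$, and (iii) the cut-offs that force $\varphi_t$ to have compact support inside each cube do not create spurious concentration, which is where $\|w^i_k\|_1\to0$ is used to absorb the boundary layers. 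Once these scales are tied together by a diagonal choice $k=k(t)$ and the cubes are refined as $t\downarrow0$, relabelling $C\eps$ as $\eps$ finishes the argument.
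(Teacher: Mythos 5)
Your overall strategy --- cover the singular set by small cubes, freeze $\nu^\infty_x$ and the mollified field on each cube, invoke the homogeneous result \thref{result_homogeneous} to produce compactly supported potentials $\varphi^\Q$ realising $(\delta_0,r_\Q\nu^\infty_{x_\Q})$ at the barycentre, glue, and estimate in the Kantorovich metric --- is the same as the paper's. But there are three genuine gaps. First, and most seriously, you justify the application of \thref{result_homogeneous} by saying the slices satisfy Jensen ``since $\nu$ is a gradient Young Measure''. In this direction of the characterisation we do \emph{not} yet know that $\nu$ is generated by gradients; that is the conclusion. What is available is hypothesis (3) of \thref{characterisation_general_cpt}, namely $f^\infty(D^su)\leq\langle\nu_x^\infty,f^{\sharp,e_{f_i,i\in\N}}\rangle d\lambda^s$, and the homogeneous proposition must be applied to the pair $(\delta_0,\,r_\Q\nu^\infty_{x_\Q})$ at the shifted point $z=r_\Q\overline{\nu^\infty_{x_\Q}}$, which requires
\begin{align}
f\big(r_\Q\overline{\nu^\infty_{x_\Q}}\big)\;\leq\; f(0)+r_\Q\int_{\partial e_{f_i,i\in\N}}f^{\sharp,e_{f_i,i\in\N}}\,d\nu^\infty_{x_\Q}.
\end{align}
The paper obtains this by combining the hypothesis with the rank-one inequality $f(z+w)\leq f(z)+f^\infty(w)$ for quasi-convex $f$ and $\rank(w)=1$ (applied with $w=\overline{\nu^\infty_{x_\Q}}$). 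Without this step the input to \thref{result_homogeneous} is not verified, and your proof does not close.

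Second, the conclusion is required to hold uniformly over $\|\eta\|_{Lip}\leq1$, $\|T\Phi\|_{Lip}\leq1$ and for \emph{all} $t\in(0,t_\eps)$; pointwise approximate continuity of $x\mapsto\nu_x^\infty$ at $\lambda^s$-a.e.\ point gives no such uniformity. The paper instead applies Luzin's theorem to extract a compact set $C^s$ carrying most of $\lambda^s$ on which $x\mapsto(\delta_0,\nu_x^\infty)$ has a single modulus of continuity $\omega$; every error term is then bounded explicitly by quantities like $\omega(3t)$, which is what makes the final estimate uniform. Third, the scale-matching you flag as ``the main obstacle'' is resolved in the paper by a definite device you should adopt: take the mollification radius $t=2^{-a}$ and dyadic cubes of the much finer generation $a+b$, define $r_\Q=\fint_\Q\phi_t\star(\lambda^s\mres C^s)$ (the average of the \emph{mollified} measure, not the raw density ratio $\lambda^s(\Q)/\L^n(\Q)$, since after mollification the mass lives on a $t$-neighbourhood of $C^s$ and the relevant cubes are those within distance $t$ of $C^s$), and control the oscillation of $\phi_t\star(\overline{\nu^\infty}d\lambda^s)$ across a single cube by the derivative bound $|D\phi_t|\leq t^{-1}M(\chi_Q)_t$, which yields an error of order $2^{-b}$ relative to the local mass. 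With these three ingredients supplied, your argument becomes the paper's proof.
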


The idea behind this approximation result is the following. The singular part of the centre of mass (which is just $Du\in \M(\Omega,\R^d)$) is approximated by mollification. Such a procedure generates area-strictly convergent smooth approximations. At the same time, we generate angle concentration and oscillation via compactly supported functions. Because the first type of convergence is very strong, and the latter doesn't concentrate, the two modes of convergence don't interfere with each other. Notice that this strategy would not be possible using the bare notion of weak* convergence because of the lack of quantifiability, whereas the (equivalent in this case) Kantorovich metric gives us an "exact" quantity to approximate.

Before proving the above statement we remind that, according to \refp{Lipschitz_pullback}, $T$ pulls back bounded sets of Lipschitz functions on $e_{f_i,i\in\N}$ to bounded sets of Lipschitz functions on $\R^{m\times n}$ (provided $f_i$ are Lipschitz). Therefore, all the functions in the following theorem can be taken to be, after renormalisation, $1$-Lipschitz in both spaces.

From now on, after fixing a compactification, we will always identify
\begin{align}
\|T\Phi\|_{Lip} = \|T\Phi\|_{Lip(e_{f_i,i\in\N})} = \|T\Phi\|_\infty + \sup_{x\neq y} \frac{|T\Phi(x)-T\Phi(y)|}{|x-y| + \sum_i 2^{-i} |Tf_i(x)-Tf_i(y)|}.
\end{align}

\begin{proof}
Fix $\eps>0$ and apply Luzin's theorem to the $\lambda^s$ map
\begin{align}
x\in \overline{\Omega} \to (\delta_0,\nu_x^\infty)\in \M_1^+(\R^d)\times \M^+(\partial e_{f_i,i\in\N}) \hookrightarrow \big( (T^{-1} e_{f_i,i\in\N} )^* \big)^+
\end{align}
to find a compact set $C=C_\eps\subset \overline{\Omega}$ with $\lambda^s(\overline{\Omega}\setminus C)<\lambda^s(\overline{\Omega})\eps$ restricted to which the above map is uniformly continuous, with modulus of continuity $\omega=\omega_\eps$. Without loss of generality, assume that $\L^n(C^s)=0$ and because $\lambda^s(\partial \Omega)=0$ then
\begin{align}
\Delta=\Delta_\eps = d(C^s,\partial \Omega)>0.
\end{align}
For the moment, fix two integers $a,b\in \N$ and put $t=2^{-a}$, so that $\phi_t>0$ if and only if $\|x\|<t$. Let $a$ be so large that
\begin{align}
2t \leq \Delta \quad \text{ and } \quad a \geq \log_2 \left( \frac{2}{\Delta} \right).
\end{align}
Denote by $\F$ the collection of $a+b$-th generation dyadic cubes $\Q$ in $\R^n$ so that $d(\Q,\partial \Omega)>2^{-a}$, and for each such $\Q\in \F$ we define
\begin{align}
r_\Q = \fint_\Q \phi\star (\lambda^s\mres C^s)dx.
\end{align}
Notice that $r_\Q>0$ means that $dist(\Q,C^s)<t$, and so for each such $\Q$ we can find $x_\Q\in C^s$ so that $d(x_\Q,\Q)<t$. Denote by $\F^s$ the set of those $\Q\in\F$ for which $r_\Q>0$. In particular, if $\Q\in \F^s$ we can find $x_\Q\in C^s$ so that $\sup_\Q \|x-x_\Q\|<2t$.

For every quasi-convex function having linear growth we have
\begin{align}
f(z+w)\leq f(z)+f^\infty(w)
\end{align}
for all $z\in \R^{m\times n}$ and $rank(w)=1$, see \citep{kirchheim2016rank}{536}, lemma 2.5 (we don't need regular recession for this result to hold). Then by assumption, we have
\begin{align}
f(r_\Q \overline{\nu^\infty_{x_\Q}}) \leq f(0) + r_\Q f^\infty(\overline{\nu^\infty_{x_\Q}} ) \leq f(0) + r_\Q \int_{\partial e_{f_i,i\in\N}} f^{\sharp,e_{f_i,i\in\N}} d\nu_{x_\Q}^\infty
\end{align}
for all $f$ quasi-convex and having linear growth.

Going back to the homogeneous case \refp{result_homogeneous}, we can select $\varphi^\Q\in \D(\Q,\R^m)$ with $\|\varphi^\Q\|_1 < \eps \lambda^s(\Q)$ such that
\begin{align}
\| (\delta_0,\nu_{x_\Q}^\infty r_\Q) - \eps_{r_\Q \overline{\nu^\infty_{x_\Q} } + D\varphi^\Q} \|_K < \eps.
\end{align}
Define $\varphi = \sum_{\Q\in \F^d} \varphi^\Q \in \D(\Omega,\R^m)$ and $\|\varphi\|_1 \leq \eps \lambda^s(\Omega)$. The sought-after map is then
\begin{align}
\xi^s = \phi\star (\overline{\nu^\infty_x} d\lambda^s + D\varphi) \in \D(\R^n,\R^d).
\end{align}

To prove that this function is the desired one, we fix $\| \eta\|_{Lip} \leq 1, \| \Psi\|_{Lip(e_{f_i,i\in\N})}\leq 1$ as in the assumptions. We have
\begin{align}
\int_\Omega \eta \langle \Phi^\infty,\phi\star (\overline{\nu^\infty} d\lambda^s)\rangle dx = \int_\Omega \eta \langle \Phi^\infty,\phi\star (\overline{\nu^\infty} d\lambda^s\mres C^s)\rangle dx + \overbrace{\int_\Omega \eta \langle \Phi^\infty,\phi\star (\overline{\nu^\infty} d\lambda^s\mres \Omega\setminus C^s)\rangle dx}^{=\mathcal{E}_1},
\end{align}
and $\ |\mathcal{E}_1|\leq \eps \lambda^s(\Omega)$. Notice that here
\begin{align}
\int_\Omega \eta \langle \Phi^\infty, \phi\star (\nu^\infty d\lambda^s \mres U)\rangle dx = \int_\Omega \eta(x) \int_U \phi(x-y) \int_{\partial e_{f_i,i\in\N}} \Phi^\infty d\nu_y^\infty d\lambda^s(y)dx
\end{align}
where $U=\Omega$ or $C^s$. Since for each $\Q\in \F$ with $r_\Q=0$
\begin{align}
\int_\Q \eta \langle \Phi^\infty, \phi\star(\nu^\infty d\lambda^s\mres C^s)\rangle dx = 0
\end{align}
and $dist(\cup \F,\partial \Omega)>2t$, we get
\begin{align}
\int_\Omega \langle \Phi^\infty,\phi\star (\nu^\infty d\lambda^s\mres C^s)\rangle dx = & \sum_{\Q\in \F^s} \int_\Q \eta \langle \Phi^\infty,\phi\star (\nu^\infty d\lambda^s \mres C^s) \rangle dx + \mathcal{E}_2 \\
= & \sum_{\Q\in \F^s} \left( \int_\Q \eta dx \langle \Phi^\infty,\nu_{x_\Q}^\infty \rangle r_\Q + \mathcal{E}_3^\Q \right) + \mathcal{E}_2,
\end{align}
where $|\mathcal{E}_2| \leq \lambda^s(C^s\cap (\partial \Omega)_{2t})$. The third error is estimated in the following way:
\begin{align}
|\mathcal{E}_3^Q| \leq & \left| \int_\Q \left( \eta - \int_\Q \eta \right) \langle \Phi^\infty,\phi\star (\nu^\infty d\lambda^s\mres C^s)\rangle dx \right| \\
& \quad + \left| \int_\Q \eta \left( \int_\Q \langle \Phi^\infty,\phi\star (\nu^\infty d\lambda^s \mres C^s)\rangle dx - \langle \Phi^\infty,\nu_{x_\Q}^\infty \rangle r_\Q\right) \right| \\
\leq & \|\eta\|_{Lip} \ \L^n(\Q)^\frac{1}{n} \ \| \Phi^\infty \| \int_\Q \phi \star \lambda^s dx \\
&\quad + \|\eta\|_{Lip} \int_\Q \int_{C^s} \phi(x-y) \langle \Phi^\infty,\nu_y^\infty - \nu_{x_\Q}^\infty \rangle d\lambda^s(y)dx.
\end{align}
In particular, we obtain
\begin{align}
|\mathcal{E}_3^Q| \leq & t \int_\Q \phi\star \lambda^s dx + \int_\Q \int_{C^s} \phi(x-y) \omega(\|y-x_\Q\|) d\lambda^s(y)dx \\
\leq & (t+\omega(3t)) \int_\Q \phi\star \lambda^s dx.
\end{align}
From each $\Q\in \F^s$ we get
\begin{align}
\Phi(0) + \langle \Phi^\infty,\nu_{x_\Q}^\infty\rangle r_\Q = \int_\Q \Phi(r_\Q \overline{\nu^\infty_{x_\Q}} + D\phi^\Q)dx + \overbrace{\mathcal{E}_4^\Q}^{|\cdot|\leq \eps}.
\end{align}
Further computations show that
\begin{align}
\int_\Q |\Phi(r_\Q \overline{\nu^\infty_{x_\Q}} + D\phi^\Q) - \Phi(0)| dx \leq & \|\eps_{r_\Q \overline{\nu^\infty_{x_\Q}} + D\phi^\Q}\|_K \\
\leq & \| (\delta_0,\nu_{x_\Q}^\infty r_\Q) \|_K + \eps \leq 1 + r_\Q + \eps
\end{align}
and consequently
\begin{align}
\int_\Q \eta \int_\Q \Phi(r_\Q \overline{\nu^\infty_{x_\Q}} + D\phi^\Q)dx = \int_\Q \eta \Phi(r_\Q \overline{\nu^\infty_{x_\Q}} + D\phi^\Q)dx + \mathcal{E}_5^\Q,
\end{align}
where the error term is upper bounded by
\begin{align}
|\mathcal{E}_5^\Q| \leq \sup_\Q \left| \eta-\int_\Q \eta \right| \L^n(\Q) (1+r_\Q+\eps) \leq \L^n(\Q)^\frac{1}{n} \int_\Q (2+\phi\star\lambda^s)dx.
\end{align}
Finally, we estimate the last term with
\begin{align}
\int_\Q \eta \Phi(r_\Q \overline{\nu^\infty_{x_\Q}} + D\phi^\Q) dx = \int_\Q \eta \Phi(\phi\star \overline{\nu^\infty} d\lambda^s) + D\phi^\Q)dx + \mathcal{E}_6^\Q.
\end{align}
To bound the 6th error term we introduce an extra quantity
\begin{align}
\left| \int_\Q \eta \Big( \Phi(\phi\star \overline{\nu^\infty} d\lambda^s) + D\phi^\Q)dx - \Phi(\phi\star \overline{\nu^\infty} d\lambda^s\mres C^s) + D\phi^\Q)\Big) dx \right| \leq \int_\Q \phi\star( \lambda^s\mres \Omega\setminus C^s)dx,
\end{align}
and
\begin{align}
& \left| \int_\Q \eta \Big( \Phi(r_\Q \overline{\nu^\infty_{x_\Q}} + D\phi^\Q) dx - \Phi(\phi\star \overline{\nu^\infty} d\lambda^s\mres C^s) + D\phi^\Q)\Big)dx \right| \\
\leq & \int_\Q |r_\Q \overline{\nu^\infty_{x_\Q}} - \phi\star (\overline{\nu^\infty} d\lambda^s\mres C^s)|dx \\
\leq & \left| \int_\Q \phi\star \big( (\overline{\nu^\infty_{x_\Q}} - \overline{\nu^\infty} \big) d\lambda^s\mres C^s)dx \right| + \int_\Q \left| \fint_\Q \phi\star (\overline{\nu^\infty} d\lambda^s\mres C^s)dx' - \phi\star(\overline{\nu^\infty} d\lambda^s\mres C^s)\right| dx \\
\leq & \int_\Q \int_{C^s} \phi(x-y) |\overline{\nu^\infty_{x_\Q}} -\overline{\nu^\infty_y}| d\lambda^s dx + \int_\Q \left| \fint_\Q \phi\star (\overline{\nu^\infty} d\lambda^s \mres C^s)dx' - \phi\star (\overline{\nu^\infty} d\lambda^s\mres C^s)\right| dx \\
\leq & \omega(3t) \int_\Q \phi\star \lambda^s dx + \mathcal{E}_7^\Q.
\end{align}
The 7th error term is estimated by
\begin{align}
\mathcal{E}_7^\Q \leq & \int_\Q \fint_\Q \int_{C^s} \big| \phi(x'-y)-\phi(x-y) \big| |\overline{\nu^\infty_y}| d\lambda^s(y)dx'dx \\
\leq & \sqrt{n} \int_\Q \fint_\Q \int_{\Q+tQ} \|x-x'\| \int_0^1 \big| D\phi(x + (x'-x)\tau)(x'-x) \big| d\tau d\lambda^s(y)dx'dx.
\end{align}

Given the scaling of $\phi = \phi_t$ with respect to $t$, we have $|D\phi|\leq t^{-1} M (\chi_\Q)_t$, so the above can be bounded by
\begin{align}
\mathcal{E}_7^\Q \leq\sqrt{n} M \frac{ \L^n(\Q)^\frac{1}{n} }{t} \int_\Q (\chi_{2Q})_t \star \lambda^s dx.
\end{align}
For our choice of $a$ and $b$, we have that $\L^n(Q) = 2^{-n(a+b)}$. With $\xi^s$ defined above we obtain that
\begin{align}
\int_\Omega \eta \langle \Phi^\infty,\phi\star (\nu^\infty d\lambda^s)\rangle dx = \int_{\Cup \F^s} \eta \Phi(\xi^s) + \mathcal{E},
\end{align}
where
\begin{align}
|\mathcal{E}| \leq & \eps \lambda^s(\Omega) + (t+\omega(3t)) \int_{\cup \F^s} \Big( \phi\star \lambda^s + 2^{-a-b}(2+ \phi \star \lambda^s) \\
& \quad + \phi\star (\lambda^s \mres \Omega\setminus C^s) + \omega(3t)\ \phi\star \lambda^s dx + \sqrt{n} M 2^{-b} (\chi_{2Q})_t \star \lambda^s \Big)dx \\
\leq & \big( 2\eps + 2^{-a} + 2\omega(32^{-a}) + 2^{-a-b} + c_n M 2^{-b} \big) \lambda^s(\Omega) + 2^{1-a-b} \L^n(\Omega).
\end{align}
To conclude we add
\begin{align}
\int_{\Omega\setminus \cup \F^s} \eta \Phi(\xi^s)dx = \int_{\Omega\setminus \cup \F^s} \eta dx \Phi(0)
\end{align}
to both sides, and obtain
\begin{align}
\int_\Omega \eta \big( \Phi(0) + \langle \Phi^\infty,\phi\star (\nu^\infty d\lambda^s\mres \Omega)\rangle \big) dx = \int_\Omega \Phi(\xi^s)dx + \mathcal{E} + \int_{\cup \F^s} \eta dx \Phi(0).
\end{align}

Because $\cup \F^s \subset (C^s)_{2t}$ and since $\L^n(C^s)=0$ we can find $a_\eps\geq a, b_\eps\geq b$ such that 
\begin{align}
|\mathcal{E}| + \left| \int_{\cup \F^s} \eta dx \Phi(0)\right| \leq 3\eps (\L^n+\lambda^s)(\Omega).
\end{align}
The left-hand side tends to
\begin{align}
\int_\Omega \eta dx \Phi(0) + \int_\Omega \eta \langle \Phi^\infty,\nu_x^\infty\rangle d\lambda^s(x)
\end{align}
as $a\to \infty$, uniformly in $\eta$ and $\Phi$, and this concludes the proof.
\end{proof}

We now move on to the absolutely continuous part, which is proven similar and is a bit easier to construct.

\begin{lemma}
Let $\eps>0$, there is $t_\eps>0$ and $\psi_t\in \D(\Omega,\R^m)$ with $\|\psi_t\|_1 \leq \eps$ so that
\begin{align}
\left| \int_\Omega \eta( \langle \Phi,\nu_x\rangle + \lambda^a(x) \langle \Phi^\infty,\nu_x^\infty\rangle)dx - \int_\Omega \eta \Phi \big( \phi_t\star (\overline{\nu} + \overline{\nu^\infty} \lambda^a(x))dx\mres \Omega + D\psi_t \big) dx \right| < \eps
\end{align}
holds for $t\in (0,t_\eps)$, uniformly in $\|\eta\|_{Lip} \leq 1$ and $\|T\Phi\|_{Lip(e_{f_i,i\in\N})} \leq 1$.
\end{lemma}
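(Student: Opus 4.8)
The plan is to mirror the construction of the preceding (singular) lemma, replacing the singular measure $\lambda^s$ by the absolutely continuous density $\lambda^a=\frac{\lambda}{\L^n}$ and exploiting the fact that the relevant centre of mass now lies in $L^1(\Omega)$. The target is a Young measure whose oscillation is $\nu_x$ and whose concentration-angle part is weighted by $\lambda^a(x)\,dx$; its barycentre is the $L^1$ field $z(x):=\overline{\nu_x}+\lambda^a(x)\,\overline{\nu_x^\infty}$, which by the implicit identity $Du=\overline{\nu}$ is exactly the absolutely continuous part $\nabla u$ of the derivative. First I would apply Luzin's theorem to the $\L^n$-measurable map $x\mapsto(\nu_x,\lambda^a(x)\nu_x^\infty,z(x))\in\M_1^+(\R^d)\times\M^+(\partial e_{f_i,i\in\N})\times\R^d$, producing a compact $C\subset\Omega$ with $\L^n(\Omega\setminus C)$ as small as desired on which this map is uniformly continuous with some modulus $\omega$.

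Since $z\in L^1(\Omega)$, the mollifications $\phi_t\star(z\,dx\mres\Omega)$ converge to $z$ in $L^1(\Omega)$ as $t\downarrow 0$; this strong convergence is precisely what makes the absolutely continuous case easier than the singular one, where no such $L^1$ convergence of the mollified measure was available. I would fix a fine dyadic decomposition of $\{x\in\Omega:d(x,\partial\Omega)>t\}$ into cubes $\Q$ of generation $a+b$, pick for each $\Q$ meeting $C$ a point $x_\Q\in C\cap\Q$, and freeze the homogeneous data $z_\Q:=z(x_\Q)$ and $(\nu_{x_\Q},\lambda^a(x_\Q)\nu_{x_\Q}^\infty)$. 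The heart of the argument is \refp{result_homogeneous}: hypothesis (2) of \refp{characterisation_general_cpt} evaluated at $x_\Q$ says that $(\nu_{x_\Q},\lambda^a(x_\Q)\nu_{x_\Q}^\infty)$ satisfies the Jensen inequality against every quasi-convex $f$ of linear growth with barycentre $z_\Q$, so this pair lies in $\mathcal{Y}$, and there is $\varphi^\Q\in\D(\Q,\R^m)$ with $\|\varphi^\Q\|_1<\eps\L^n(\Q)$ and $\|(\nu_{x_\Q},\lambda^a(x_\Q)\nu_{x_\Q}^\infty)-\eps_{z_\Q+D\varphi^\Q}\|_K<\eps$ (after the standard rescaling to volume $\L^n(\Q)$).

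Setting $\psi_t=\sum_\Q\varphi^\Q$ (so $\|\psi_t\|_1\leq\eps\L^n(\Omega)$) and $\xi=\phi_t\star(z\,dx\mres\Omega)+D\psi_t$, I would expand $\int_\Omega\eta\,\Phi(\xi)\,dx$ cube by cube and compare it with $\int_\Omega\eta(\langle\Phi,\nu_x\rangle+\lambda^a(x)\langle\Phi^\infty,\nu_x^\infty\rangle)\,dx$, collecting the errors from: (i) freezing $\eta$ to its mean on each $\Q$, bounded by $\|\eta\|_{Lip}$ times the cube diameter; (ii) replacing the mollified centre $\phi_t\star z$ inside $\Phi$ by the constant $z_\Q$, which by \refp{Lipschitz_pullback} (so that $\Phi$ is genuinely Lipschitz on $\R^d$) costs at most a fixed multiple of $\int_\Q|\phi_t\star z-z_\Q|\,dx\leq\int_\Q|\phi_t\star z-z|\,dx+\omega(\operatorname{diam}\Q)\L^n(\Q)$; (iii) the Kantorovich error $\eps$ per cube; and (iv) the modulus $\omega$ when passing from $(\nu_{x_\Q},\nu_{x_\Q}^\infty)$ back to $(\nu_x,\nu_x^\infty)$. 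The cubes meeting $\Omega\setminus C$ and the collar near $\partial\Omega$ contribute $O(\L^n(\Omega\setminus C))+o(1)$, made small by the choice of $C$ and $t$.

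The main obstacle is ensuring that the mollified centre of mass and the locally generated fluctuations do not interfere. Because each $\varphi^\Q$ is compactly supported in its own cube, the gradients $D\varphi^\Q$ have disjoint supports and vanish near $\partial\Q$, so on $\Q$ the field $\xi$ equals $\phi_t\star z+D\varphi^\Q$ exactly; the only coupling is the deviation $\phi_t\star z-z_\Q$, and controlling it is where the $L^1$ approximate-identity property and the Luzin modulus $\omega$ do the work. Summing all contributions and choosing first $C$ (hence $\omega$), then the generation $a+b$ large and $t$ small, every error is bounded by a fixed multiple of $\eps+\omega(\operatorname{diam}\Q)+\|\phi_t\star z-z\|_{L^1}$, uniformly over $\|\eta\|_{Lip}\leq 1$ and $\|T\Phi\|_{Lip(e_{f_i,i\in\N})}\leq 1$; relabelling yields the stated estimate. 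This absolutely continuous piece is then merged with the singular one through \refp{join_sequence_Y}.
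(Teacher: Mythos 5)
Your proposal is correct and follows essentially the same route as the paper's proof: Luzin's theorem to get uniform continuity of $x\mapsto(\nu_x,\lambda^a(x)\nu_x^\infty)$ on a compact set, a fine dyadic covering on which the data are frozen at points $x_\Q$, the homogeneous proposition to produce the compactly supported perturbations cube by cube with Kantorovich error $\eps$, and the $L^1$ convergence of the mollified barycentre together with the Lipschitz pullback of $T$ to control the remaining error terms. The only differences are cosmetic: the paper selects cubes by a density threshold $\L^n(\Q\cap C^a)>s\,\L^n(\Q)$ rather than mere intersection with $C$, and bounds the exceptional contribution by $\int_{\Omega\setminus C^a}M(x)\,dx$ rather than by $\L^n(\Omega\setminus C)$ alone (which is the right quantity since the integrands have linear growth, but both are made small by absolute continuity of the integral of $M\in L^1$).
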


\begin{proof}
Fix $\eps\in (0,1)$ and apply Luzin's theorem to the $\L^n$-measurable map
\begin{align}
\Omega\ni x \mapsto (\nu_x,\lambda^a(x) \nu_x^\infty) \in \M_1^+(\R^d)\times \M^+(\partial e_{f_i,i\in\N}) \hookrightarrow \big( (T^{-1}e_{f_i,i\in \N})^* \big)^+
\end{align}
to find a compact set $C^a\subset \Omega$ such that
\begin{align}
\int_{\Omega\setminus C^a} M(x)dx < \eps, \ M(x) =  \langle \nu_x,|\cdot|\rangle + \lambda^a(x),
\end{align}
and $\omega$ be the modulus of continuity over $C^a$, i.e.
\begin{align}
\|(\nu_x,\lambda^a(x)\nu_x^\infty) - (\nu_y,\lambda^a(y)\nu_y^\infty)\|_K \leq \omega(\|x-y\|) \quad \text{ for all } x,y\in C^a.
\end{align}
Fix $d\in \N$ and $s\in (0,1)$ and let $\F^a$ be the family of dyadic cubes in $\R^n$ of side length $t=2^{-d}$, i.e.
\begin{equation}
\F^a = \{ \Q\in \D_d:d(\Q,\partial \Omega)>t,\ \L^n(\Q\cap C^a)>s\L^n(\Q)\},
\end{equation}
where the distance is induced by $\|\cdot\|_\infty$ over vectors in $\R^n$, and $d$ and $s$ will be selected later in the proof. For every $\Q\in \F^a$ select $x_\Q$. By \refp{result_homogeneous} we have $\psi^Q \in \D(Q,\R^{n\times m})$ with $\|\psi^\Q\|_1<\eps \frac{\L^n(\Q)}{\L^n(\Omega)}$ and
\begin{align}
\| (\nu_{x_\Q},\lambda^a(x_\Q)\nu_{x_\Q}^\infty) - \eps_{\overline{\nu_{x_\Q}} +\lambda^a(x_\Q)\overline{\nu^\infty_{x_\Q}} + D\psi^\infty} \|_K < \eps.
\end{align}
Let $\psi = \sum_\Q \psi^\Q \in \D(\Omega,\R^{m\times n})$ and $\|\psi\|_1\leq \eps$. Then
\begin{align}
& \int_\Omega \eta(\langle \Psi,\nu_x\rangle + \lambda^a(x)\langle \Psi^\infty,\nu_x^\infty\rangle)dx = \sum_{\Q\in\F^a} \int_\Q \eta(\langle \Psi,\nu_x\rangle + \lambda^a(x)\langle \Psi^\infty,\nu_x^\infty\rangle)dx + \mathcal{E}_1 \\
&\text{with } |\mathcal{E}_1| \leq \int_{\Omega\setminus \cup \F^a} M(x)dx \leq \eps
\end{align}
for large enough $d$. Next
\begin{align}
\sum_{\Q\in \F^a} \int_\Q \eta(\langle \Psi,\nu_x\rangle + \lambda^a(x)\langle \Psi^\infty,\nu_x^\infty\rangle)dx = \sum_{\Q\in \F^a} \fint_\Q \eta \int_\Q (\langle \Psi,\nu_x\rangle + \lambda^a(x)\langle \Psi^\infty,\nu_x^\infty\rangle)dx + \mathcal{E}_2,
\end{align}
where
\begin{align}
|\mathcal{E}_2| \leq t \sum_{\Q\in \F^a} \int_\Q |\langle \Phi,\nu_x\rangle + \lambda^a(x) \langle \Phi^\infty,\nu_x^\infty\rangle | dx \leq t \int_\Omega M(x)dx.
\end{align}
We further estimate, on every set $\Q\in \F^a$,
\begin{align}
& \left| \int_\Q \langle \Phi,\nu_x\rangle + \lambda^a(x) \langle \Phi^\infty,\nu_x^\infty\rangle - \L^n(\Q) \Big( \langle \Phi,\nu_{x_\Q}\rangle + \lambda^a(x_\Q) \langle \Phi^\infty,\nu_{x_\Q}^\infty\rangle \Big) \right| \\
\leq & \frac{\omega(t)}{s}\L^n(\Q) + \frac{1-s}{s} \int_\Q |\langle \Phi,\nu_x\rangle + \lambda^a(x) \langle \Phi^\infty,\nu_x^\infty\rangle|dx + \int_{\Q\setminus C^a} |\langle \Phi,\nu_x\rangle + \lambda^a(x) \langle \Phi^\infty,\nu_x^\infty\rangle|dx.
\end{align}
By the linear growth of $f$, we get that
\begin{align}
\sum_{\Q\in \F^a} \fint_\Q \eta \int_\Q \langle \Phi,\nu_x\rangle + \lambda^a(x) \langle \Phi^\infty,\nu_x^\infty\rangle dx = \sum_{\Q\in \F^a} \Big( \langle \Phi,\nu_{x_\Q}\rangle + \lambda^a(x_\Q) \langle \Phi^\infty,\nu_{x_\Q}^\infty\rangle \Big) \int_\Q \eta + \mathcal{E}_3
\end{align}
where
\begin{align}
|\mathcal{E}_3| \leq \frac{\omega(t)}{s}\L^n(\Q) + \frac{1-s}{s} \int_\Q M(x) dx + \eps.
\end{align}
For every $\Q\in \F^a$ we have that
\begin{align}
f(x_\Q) = \fint_\Q \Phi(\overline{\nu_{x_\Q}} + \lambda^a(x_\Q) \overline{\nu^\infty_{x_\Q}} + D\phi^\Q) + \overbrace{\mathcal{E}_4^\Q}^{|\cdot| \leq \eps}.
\end{align}
Set $v^a(x) = \overline{\nu_x} + \lambda^a(x) \overline{\nu^\infty_x}$. Letting $\Phi = z\cdot e_i$, $(e_i)$ canonical basis of $\R^{m\times n}$, we obtain, from continuity over $C^a$, $|v^a-v^a(x_\Q)| \leq \omega(t)$ on $\Q\cap C^a$ for each $\Q\in \F^a$. Consequently,
\begin{align}
\int_\Q |v^a-v^a(x_\Q)| dx \leq \frac{\omega(t)}{s} \L^n(\Q) + \int_{\Q\setminus C^a} |v^a|dx + \frac{1-s}{s} \int_\Q |v^a|dx
\end{align}
for all $\Q\in \F^a$. Because $|v^a|\leq M(x)$ and $Lip(\Phi) \leq 5$, then
\begin{align}
\sum_{\Q\in \F^a} \fint_\Q \eta dx \int_\Q \Phi(v^a(x_\Q) + D\psi^\Q)dx = \sum_{\Q\in \F^a} \fint_\Q \eta dx \int_\Q \Phi(v^a + D\psi^Q)dx + \mathcal{E}_5
\end{align}
with
\begin{align}
|\mathcal{E}_5| \leq 5 \frac{\omega(t)}{s} \L^n(Q) + 5\eps + 5\frac{1-s}{s} \int_\Omega M(x)dx.
\end{align}
Combining some of the previous estimates, we get
\begin{align}
\sum_{\Q\in \F^a} \fint_\Q \eta dx \int_\Q \Phi(v^a + D\psi^\Q)dx = \sum_{\Q\in \F^a} \int_\Q \eta \Phi(v^a + D\psi^\Q)dx + \mathcal{E}_6,
\end{align}
and
\begin{align}
|\mathcal{E}_6| \leq & t \sum_{\Q\in \F^a} \int_\Q |\Phi(v^a + D\psi^\Q)| dx \leq t |\mathcal{E}_5| + t \sum_{\Q\in \F^a} \int_\Q |\Phi(v^a(x_\Q) + D\psi^\Q)| dx \\
\leq & t|\mathcal{E}_5| + t\eps \L^n(\Omega) + t \sum_{\Q\in \F^a} \L^n(\Q) (\langle |\Phi|,\nu_{x_\Q}\rangle + \lambda^a(x_\Q) \langle |\Phi|^\infty,\nu_{x_\Q}^\infty\rangle) \\
\leq & t|\mathcal{E}_5| + t\eps \L^n(\Omega) + t \frac{\omega(t)}{s}\L^n(|) + \left( \int_{Q\setminus C^a} + \frac{1-s}{s}\int_\Omega \right) M(x) dx.
\end{align}
Finally, if $\phi_t$ is a standard mollifier, then $\phi_t \star v^a\mres \Omega \xrightarrow{L^1(\Omega)} v^a$ as $t\to 0$, and so
\begin{align}
\int_\Omega \eta \Phi(v^a+D\psi)dx = \int_\Omega \eta \Phi(\phi_t \star v^a\mres \Omega + D\psi)dx + \mathcal{E}_7,
\end{align}
where using again that $\Phi$ is Lipschitz over $\R^{m\times n}$,
\begin{align}
|\mathcal{E}_7| \leq Lip(\Phi) \int_\Omega |\phi_t \star v^a\mres \Omega - v^a|dx \leq 5 \int_\Omega |\phi_t\star v^a\mres \Omega - v^a|dx.
\end{align}
This concludes the proof.
\end{proof}

\newpage
\appendix

\section{Appendix}

\begin{theorem}[Vitali convergence theorem] \label{Appendix:Vitali_conv}
Let $\mu\in \M^+(\Omega)$ and $f_n,f\in L^1(\Omega,\mu)$. Then $f_n\to f$ in $L^1(\Omega,\mu)$ if and only if $f_n\to f$ in measure and $f_n$ is uniformly integrable.
\end{theorem}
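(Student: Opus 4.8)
The plan is to prove the two implications separately, using throughout that $\mu(\Omega)<\infty$ since $\mu\in\M^+(\Omega)$. The backbone of the argument is a reformulation of equi-integrability: for an $L^1$-bounded family, the tail condition $\lim_{k}\sup_n\int_{|f_n|>k}|f_n|\,d\mu=0$ from the definition is equivalent to uniform absolute continuity of the integrals, i.e. for every $\eps>0$ there is $\delta>0$ with $\mu(E)<\delta\Rightarrow\sup_n\int_E|f_n|\,d\mu<\eps$. I would establish this equivalence first as a short lemma: one direction splits $\int_E|f_n|$ over $\{|f_n|\le k\}$ and $\{|f_n|>k\}$ and uses $\int_{E\cap\{|f_n|\le k\}}|f_n|\le k\,\mu(E)$; the converse uses Chebyshev's inequality $\mu(\{|f_n|>k\})\le M/k$, where $M=\sup_n\|f_n\|_1$, to make the superlevel sets small in measure. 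This is the place where finiteness of $\mu$ and the norm bound genuinely enter.

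For the forward implication, assume $f_n\to f$ in $L^1$. Convergence in measure is immediate from Markov's inequality, $\mu(\{|f_n-f|>\eps\})\le\eps^{-1}\|f_n-f\|_1\to 0$, and norm boundedness is clear. For uniform absolute continuity, fix $\eps>0$, pick $N$ with $\|f_n-f\|_1<\eps/2$ for $n>N$, and for such $n$ bound $\int_E|f_n|\le\|f_n-f\|_1+\int_E|f|$; since a single $L^1$ function (here $f$) has absolutely continuous integral, choosing $\mu(E)$ small handles the $f$ term, while the finitely many $f_1,\dots,f_N$ are each individually absolutely continuous, so a common $\delta$ works. By the lemma this gives equi-integrability.

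For the converse, assume $f_n\to f$ in measure and $(f_n)$ equi-integrable. Fix $\eps>0$ and $\eta>0$, and split
\[
\int_\Omega|f_n-f|\,d\mu=\int_{\{|f_n-f|\le\eta\}}|f_n-f|\,d\mu+\int_{E_n}|f_n-f|\,d\mu,\qquad E_n:=\{|f_n-f|>\eta\}.
\]
The first term is at most $\eta\,\mu(\Omega)$, which is small once $\eta$ is chosen small. For the second, $\int_{E_n}|f_n-f|\le\int_{E_n}|f_n|+\int_{E_n}|f|$; convergence in measure gives $\mu(E_n)\to 0$, so applying the uniform-absolute-continuity form of equi-integrability (from the lemma) to $(f_n)$, together with absolute continuity of the integral of $f$, makes both terms small for $n$ large. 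Letting $\eta\to 0$ then yields $\int_\Omega|f_n-f|\,d\mu\to 0$.

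The main obstacle is really the bookkeeping in the lemma linking the two formulations of equi-integrability, since the definition in the text is phrased through superlevel tails whereas the limiting argument wants the $\delta$–$\eps$ absolute-continuity form; everything else reduces to a routine $\eps/3$ split. One should also note that, although $f\in L^1$ is assumed here, it could alternatively be recovered from a $\mu$-a.e.\ convergent subsequence (extracted from convergence in measure) together with Fatou's lemma.
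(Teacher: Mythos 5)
Your proof is correct. Note that the paper does not actually prove this statement; it simply cites Bogachev (Theorem 4.5.4), so there is no in-paper argument to compare against. Your write-up is the standard self-contained proof on a finite measure space, and you correctly isolate the one point that needs care: the equivalence, for an $L^1(\mu)$-bounded family with $\mu(\Omega)<\infty$, between the superlevel-tail formulation of equi-integrability used in the paper's definition and the $\delta$--$\eps$ uniform absolute continuity needed in the limiting argument. Both implications and the bridging lemma are sound; the only cosmetic remark is that in the converse direction the phrase ``letting $\eta\to 0$'' is better replaced by fixing $\eta$ with $\eta\,\mu(\Omega)<\eps/3$ up front, since the estimate is a standard $\eps/3$ split rather than a genuine limit in $\eta$.
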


\begin{proof}
See \citep{bogachev2007measure}{268}, theorem 4.5.4.
\end{proof}

\begin{theorem}[Stone-Weierstrass]
Let $X$ be a compact Hausdorff space. If $\A$ is a closed subalgebra of $C(X)$ that separates points, then either $\A=C(X)$ or there is $x_0\in X$ so that $\A = \{f\in C(X):f(x_0)=0\}$. In particular, $\A=C(X)$ if and only if $\A$ contains the constant functions.
\end{theorem}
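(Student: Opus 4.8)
The plan is to reduce everything to the classical lattice form of the Stone--Weierstrass argument, organising the two alternatives around the common-zero set $Z=\{x\in X : f(x)=0 \text{ for all } f\in\A\}$.

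First I would establish that $\A$ is closed under $\max$ and $\min$. The key point is that $f\in\A$ implies $|f|\in\A$: setting $M=\|f\|_\infty$, the Weierstrass approximation theorem gives polynomials approximating $t\mapsto|t|$ uniformly on $[-M,M]$, and after subtracting the value at $0$ one may take these polynomials to have zero constant term (this is the only place where not assuming $1\in\A$ requires care). Applying such a polynomial to $f$ keeps the result inside the algebra $\A$, and since $\A$ is closed, $|f|\in\A$. The identities $\max(f,g)=\tfrac12(f+g+|f-g|)$ and $\min(f,g)=\tfrac12(f+g-|f-g|)$ then exhibit $\A$ as a closed sublattice.

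Next I would prove the interpolation lemma: if a closed sublattice $L\subseteq C(X)$ can match arbitrary prescribed values at any two distinct points, then $L=C(X)$. Given $f\in C(X)$ and $\eps>0$, for each pair $x,y$ pick $h_{x,y}\in L$ agreeing with $f$ at $x$ and at $y$; fixing $x$ and covering the compact $X$ by the open sets $\{h_{x,y}<f+\eps\}$, a finite minimum produces $h_x\in L$ with $h_x<f+\eps$ everywhere and $h_x(x)=f(x)$; covering $X$ by $\{h_x>f-\eps\}$ and taking a finite maximum yields $h\in L$ with $\|h-f\|_\infty<\eps$, so $f\in L$ by closedness.

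Finally I would run the dichotomy. Since $\A$ separates points, $Z$ has at most one element. If $Z=\emptyset$, then for distinct $x,y$ the evaluation map $h\mapsto(h(x),h(y))$ carries $\A$ onto a subalgebra of $\R^2$ contained in none of the proper subalgebras---the two coordinate axes (since neither $x$ nor $y$ is a common zero) and the diagonal (by point separation)---hence equal to $\R^2$; this is precisely the two-point interpolation hypothesis, so $\A=C(X)$. If $Z=\{x_0\}$, I would pass to $\A+\R\cdot 1$, which is a closed subalgebra (a finite-dimensional extension of a closed subspace), contains the constants, separates points, and has empty common-zero set, hence equals $C(X)$ by the previous case; then any $f$ with $f(x_0)=0$ decomposes as $g+c$ with $g\in\A$, $c\in\R$, and evaluating at $x_0$ forces $c=0$, so $f=g\in\A$ and $\A=\{f:f(x_0)=0\}$. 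The closing ``in particular'' is then immediate, since containing the constants rules out any common zero and places us in the first alternative. The main obstacle throughout is the bookkeeping needed to avoid assuming $1\in\A$---arranging zero-constant-term polynomial approximations of $|t|$ and verifying surjectivity onto $\R^2$---rather than any deep difficulty.
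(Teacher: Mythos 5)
The paper does not prove this statement itself; it only cites Folland (Theorem 4.45), whose argument is precisely the lattice-theoretic proof you give: closure under $|\cdot|$ via zero-constant-term polynomial approximations, the two-point interpolation lemma for closed sublattices, and the dichotomy organised around the common-zero set. Your write-up is a correct and complete rendition of that standard approach, including the careful handling of the non-unital case via the classification of subalgebras of $\R^2$ and the passage to $\A+\R\cdot 1$, so nothing needs to be changed.
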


\begin{proof}
See \citep{folland1999real}{139}, theorem 4.45.
\end{proof}

\begin{theorem}[Tychonoff] \label{Appendix:Tychonoff}
If $\{X_\alpha\}_{\alpha\in A}$ is a family of compact spaces, $\Pi_{\alpha\in A} X_\alpha$ is compact in the product topology.
\end{theorem}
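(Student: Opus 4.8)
The plan is to prove compactness through the ultrafilter characterization of compactness: a topological space $X$ is compact if and only if every ultrafilter on $X$ converges to some point. This is the most economical route for an arbitrary (possibly uncountable) index set $A$, since it sidesteps any explicit bookkeeping with open covers indexed over $A$.

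First I would record the two standard facts about ultrafilters that do the work. The first is that ultrafilters push forward: if $\mathcal{U}$ is an ultrafilter on a set $Y$ and $g\colon Y \to Z$ is any map, then $g_*\mathcal{U} = \{ B \subset Z : g^{-1}(B) \in \mathcal{U}\}$ is an ultrafilter on $Z$. The second is that to verify convergence of a filter on a product to a point it suffices to check that the filter contains every subbasic neighbourhood of that point, because a basic neighbourhood is a finite intersection of subbasic ones (and filters are closed under finite intersections and supersets).

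For the main step, let $\mathcal{U}$ be an arbitrary ultrafilter on $X = \prod_{\alpha \in A} X_\alpha$. For each coordinate $\alpha$ the projection $\pi_\alpha$ pushes $\mathcal{U}$ forward to an ultrafilter $(\pi_\alpha)_*\mathcal{U}$ on the compact space $X_\alpha$, which by compactness converges to some $x_\alpha \in X_\alpha$. Set $x = (x_\alpha)_{\alpha \in A}$; I claim $\mathcal{U} \to x$ in the product topology. Indeed, a subbasic neighbourhood of $x$ has the form $\pi_\alpha^{-1}(U)$ with $x_\alpha \in U$ open, and convergence of $(\pi_\alpha)_*\mathcal{U}$ to $x_\alpha$ gives $U \in (\pi_\alpha)_*\mathcal{U}$, i.e. $\pi_\alpha^{-1}(U) \in \mathcal{U}$. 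Thus every subbasic neighbourhood of $x$ lies in $\mathcal{U}$, so by the second fact $\mathcal{U}$ converges to $x$. Since every ultrafilter converges, $X$ is compact.

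The hard part is conceptual rather than computational: it is precisely where the Axiom of Choice enters, and in fact Tychonoff's theorem is equivalent to it. Choice is used twice — first in guaranteeing (via Zorn's lemma) that ultrafilters exist and that every ultrafilter on a compact space has a limit, and second in selecting the coordinate limits $x_\alpha$ simultaneously across the possibly uncountable index set $A$. An equally valid alternative would replace the ultrafilter machinery by the Alexander subbase lemma applied to the canonical subbase $\{\pi_\alpha^{-1}(U)\}$: given a subbasic cover $\mathcal{C}$ admitting no finite subcover, each coordinate family $\{U : \pi_\alpha^{-1}(U) \in \mathcal{C}\}$ must fail to cover $X_\alpha$ (otherwise compactness of $X_\alpha$ yields a finite subcover whose preimages cover $X$), so one chooses $x_\alpha$ outside each such family and assembles a point of $X$ missed by every member of $\mathcal{C}$, a contradiction.
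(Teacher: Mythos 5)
Your proof is correct. Note first that the paper itself offers no proof of this statement: Tychonoff's theorem is recorded in the appendix as a standard background fact (alongside Stone--Weierstrass, Banach--Alaoglu, etc.), so there is no argument in the text to compare yours against. The ultrafilter route you take is one of the two canonical proofs, and every step checks out: pushforwards of ultrafilters are ultrafilters, each $(\pi_\alpha)_*\mathcal{U}$ converges by compactness of $X_\alpha$, and convergence of a filter in the product topology is correctly reduced to membership of the subbasic neighbourhoods. Your remark that the general (non-Hausdorff) Tychonoff theorem is equivalent to the Axiom of Choice is also accurate, and you correctly identify that the simultaneous selection of the limits $x_\alpha$ is a genuine use of choice precisely because limits need not be unique without a Hausdorff hypothesis. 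The only slight imprecision is the claim that choice is needed to guarantee that an ultrafilter on a compact space has a limit: once the ultrafilter is in hand, the existence of a cluster point (hence a limit, for an ultrafilter) follows from the finite intersection property characterisation of compactness without any further appeal to choice; the choice-theoretic content sits in the ultrafilter lemma and in the selection of the $x_\alpha$. This does not affect the validity of the proof. For the purposes of this paper, only the countable-product case over metrisable factors is ever invoked (in Theorem \ref{existenceCompact}, via separable families $F$), where the result is available by a diagonal argument without the full strength of choice.
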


\begin{theorem}[Banach-Alaoglu sequential version] \label{Appendix:Banach-Alaoglu}
Let $X$ be a separable Banach space and $\B\subset X^*$ the closed unit ball of the dual. Then $\B$ is weakly* sequentially compact.
\end{theorem}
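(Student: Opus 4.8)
The plan is to prove this by a classical diagonal extraction argument, exploiting separability to reduce weak* convergence to pointwise convergence on a countable dense set, and then using uniform boundedness to upgrade this to convergence on all of $X$.

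First I would fix an arbitrary sequence $(f_n)_{n\in\N}\subset \B$, so that $\|f_n\|\leq 1$ for all $n$, and choose a countable dense subset $\{x_k\}_{k\in\N}\subset X$, which exists since $X$ is separable. For each fixed $k$ the scalar sequence $(f_n(x_k))_n$ is bounded, because $|f_n(x_k)|\leq \|f_n\|\,\|x_k\|\leq \|x_k\|$, so by Bolzano--Weierstrass it admits a convergent subsequence. A standard diagonal argument then produces a single subsequence, which I relabel $(g_n)_n$, such that $\lim_n g_n(x_k)$ exists for every $k\in\N$: extract a subsequence along which $g_n(x_1)$ converges, then a further subsequence along which $g_n(x_2)$ converges, and so on, and finally take the diagonal sequence.

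Next I would show that $\lim_n g_n(x)$ exists for every $x\in X$. Fix $x\in X$ and $\eps>0$, and pick $x_k$ with $\|x-x_k\|<\eps$. Using $\|g_n\|\leq 1$ I estimate
\begin{align}
|g_n(x)-g_m(x)| \leq |g_n(x)-g_n(x_k)| + |g_n(x_k)-g_m(x_k)| + |g_m(x_k)-g_m(x)| \leq 2\eps + |g_n(x_k)-g_m(x_k)|.
\end{align}
Since $(g_n(x_k))_n$ converges it is Cauchy, so the middle term is below $\eps$ for $n,m$ large; hence $(g_n(x))_n$ is Cauchy and therefore converges. I then define $f(x):=\lim_n g_n(x)$.

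Finally I would verify that $f\in\B$ and that $g_n\rightharpoonup^* f$. Linearity of $f$ follows from the linearity of each $g_n$ together with the algebra of limits, while $|f(x)|=\lim_n|g_n(x)|\leq \|x\|$ gives $\|f\|\leq 1$, that is $f\in\B$. By construction $g_n(x)\to f(x)$ for every $x\in X$, which is exactly weak* convergence $g_n\rightharpoonup^* f$. Thus every sequence in $\B$ has a weakly* convergent subsequence with limit again in $\B$, proving weak* sequential compactness. The only genuinely delicate point is the diagonal extraction combined with the density-plus-boundedness passage from a dense set to all of $X$; the remaining verifications are routine.
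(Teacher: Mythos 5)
Your proof is correct and is the standard diagonal-extraction argument for the sequential Banach--Alaoglu theorem; the paper itself gives no proof here, only a citation to Lax, and the argument you supply is precisely the one found in that reference. The key steps --- boundedness of $(f_n(x_k))_n$, the diagonal subsequence, the Cauchy estimate via density and the uniform bound $\|g_n\|\leq 1$, and the verification that the pointwise limit is linear with norm at most $1$ --- are all present and correctly executed.
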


\begin{proof}
See \citep{lax2014functional}{107}, theorem 12.
\end{proof}

\begin{lemma}[Chacon biting lemma] \label{Appendix:Chacon}
Let $\mu\in \M^+(\Omega)$ and $v_j\in L^1(\Omega,\mu)$ be a sequence such that $\sup_j \|v_j\|<\infty$. There are sets $E_k\subset E_{k+1}, \mu(\Omega\setminus E_k)\to 0$ and a subsequence $(v_{j_i})_i$ of $(v_j)_j$ and $v\in L^1(\mu)$ so that $v_{j_i} \rightharpoonup v$ in $L^1(E_k,\mu)$ for all $k$.
\end{lemma}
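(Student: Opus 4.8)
The plan is to prove the biting lemma by a truncation-and-diagonalisation argument, reducing the desired weak $L^1$ convergence on the sets $E_k$ to a statement about equi-integrability on those sets. For $M\in\N$ and $w\in\R^d$ write $\tau_M(w)=w\min(1,M/|w|)$ for the truncation of the magnitude at level $M$, so that $|w-\tau_M(w)|=(|w|-M)^+$. First I would observe that each truncated sequence $(\tau_M(v_j))_j$ is bounded in $L^\infty(\mu)$ by $M$, hence bounded and equi-integrable in $L^1(\Omega,\mu)$; since $\mu$ is finite, the Dunford--Pettis theorem (equi-integrable, $L^1$-bounded sequences are relatively weakly sequentially compact) applies. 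A diagonal extraction over $M\in\N$ then produces a single subsequence $(v_{j_i})_i$ such that, for every $M$, $\tau_M(v_{j_i})\rightharpoonup g_M$ weakly in $L^1(\mu)$ and the numbers $\int_\Omega(|v_{j_i}|-M)^+\,d\mu$ converge to some $\rho_M\ge 0$.

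Next I would check that the limits $g_M$ stabilise. For $M<M'$ the magnitude of $\tau_{M'}(v_{j_i})-\tau_M(v_{j_i})$ equals $(|v_{j_i}|-M)^+-(|v_{j_i}|-M')^+$, so weak lower semicontinuity of the $L^1$ norm gives $\|g_{M'}-g_M\|_{L^1}\le\rho_M-\rho_{M'}$. As $(\rho_M)$ is nonincreasing and bounded below it is Cauchy, whence $(g_M)$ is Cauchy in $L^1(\mu)$; I set $v:=\lim_M g_M\in L^1(\mu)$. Testing against $\phi\in L^\infty(\mu)$ and splitting $v_{j_i}=\tau_M(v_{j_i})+(v_{j_i}-\tau_M(v_{j_i}))$, the claim $v_{j_i}\rightharpoonup v$ in $L^1(E_k,\mu)$ then reduces to choosing the $E_k$ so that $\lim_{M\to\infty}\sup_i\int_{E_k}(|v_{j_i}|-M)^+\,d\mu=0$, i.e. so that $(v_{j_i})_i$ is equi-integrable on each $E_k$: the truncated part converges by the extraction, and the error $\|g_M-v\|_{L^1}$ is uniformly small for large $M$.

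The heart of the argument, and the step I expect to be the main obstacle, is the construction of the increasing sets $E_k$ with $\mu(\Omega\setminus E_k)\to 0$ on which the subsequence is equi-integrable. The difficulty is that the loss of equi-integrability is caused by mass escaping to large values on sets of small $\mu$-measure (Chebyshev gives $\mu(\{|v_{j_i}|>M\})\le C/M$), and these concentration sets depend on $i$, so no single small set can remove them for all $i$ at once; passing to a subsequence is genuinely necessary, as the example of ``equidistributed spikes'' $v_j=j\,\mathbf{1}_{S_j}$ with $\mu(S_j)\to 0$ already shows. The plan is to extract a further subsequence along which the concentration is carried by a summable family of small sets: choosing truncation levels $M_i\uparrow\infty$ with $\mu(\{|v_{j_i}|>M_i\})\le 2^{-i}$ and setting $B_k=\bigcup_{i\ge N_k}\{|v_{j_i}|>M_i\}$ with $N_k\uparrow\infty$ yields $\mu(B_k)\to 0$; one then takes $E_k=\Omega\setminus B_k$. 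The delicate point is to refine the choice of levels and of the subsequence \emph{simultaneously} so that this also absorbs any intermediate-level tail mass, thereby securing $\lim_{M}\sup_i\int_{E_k}(|v_{j_i}|-M)^+\,d\mu=0$ on each fixed $E_k$ while keeping $\mu(\Omega\setminus E_k)\to 0$.

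Once equi-integrability on each $E_k$ is established, the Dunford--Pettis theorem (equivalently, Vitali's theorem \ref{Appendix:Vitali_conv} together with the uniform bound $\sup_j\|v_j\|<\infty$) provides weak $L^1(E_k,\mu)$ convergence along a subsequence; a final diagonalisation over $k$ produces one subsequence, relabelled $(v_{j_i})_i$, converging weakly on every $E_k$. Since $E_k\subset E_{k+1}$, these weak limits are mutually consistent restrictions, and because $\mu(\Omega\setminus E_k)\to 0$ the sets $E_k$ exhaust $\Omega$ up to a $\mu$-null set, so the limits glue to a single $v\in L^1(\mu)$, which agrees with $\lim_M g_M$ from the second step. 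This $v$ together with the sets $E_k$ satisfies all the asserted properties, completing the proof.
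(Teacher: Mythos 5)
The paper does not give a proof of this lemma at all --- it simply cites Ball and Murat's \emph{Remarks on Chacon's biting lemma} --- so your argument has to stand on its own. Its frame is the standard truncation proof, and the first two steps are correct: the diagonal extraction of $g_M$ and $\rho_M$, the identity $|\tau_{M'}(w)-\tau_M(w)|=(|w|-M)^+-(|w|-M')^+$, the bound $\|g_{M'}-g_M\|_{L^1}\le\rho_M-\rho_{M'}$ via weak lower semicontinuity of the norm, hence the existence of $v=\lim_M g_M$, and the reduction of the conclusion to equi-integrability of $(v_{j_i})_i$ on each $E_k$.

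The gap is exactly where you flag it, and the construction you sketch to close it fails as stated. Choosing levels $M_i$ subject only to $\mu(\{|v_{j_i}|>M_i\})\le 2^{-i}$ and biting off $B_k=\bigcup_{i\ge N_k}\{|v_{j_i}|>M_i\}$ is insufficient: take $v_{j_i}=i\,\chi_{S_i}$ with $\mu(S_i)=1/i$ and $M_i=2i$; then every set $\{|v_{j_i}|>M_i\}$ is empty, so $E_k=\Omega$, on which the sequence is certainly not equi-integrable. Smallness of the removed sets in $\mu$-measure is beside the point; what must be arranged is that they carry essentially \emph{all} of the concentrating mass, and this is the actual content of the lemma. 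The standard repair is to quantify the concentration first: set $Q(t)=\limsup_i\int_{\{|v_{j_i}|\ge t\}}|v_{j_i}|\,d\mu$, note $Q$ is nonincreasing so $Q(t)\downarrow\alpha\ge 0$, and extract a further subsequence together with levels $t_i\uparrow\infty$ (growing fast enough that $\sum_i \mu(A_i)<\infty$, where $A_i=\{|v_{j_i}|\ge t_i\}$) such that $\int_{A_i}|v_{j_i}|\,d\mu\to\alpha$. With $E_k=\Omega\setminus\bigcup_{i\ge k}A_i$, equi-integrability on $E_k$ follows by contradiction: a residual tail mass $\eps>0$ escaping on $E_k$ at levels $s_i\to\infty$ would sit on sets disjoint from $A_i$ for $i\ge k$, giving $\int_{\{|v_{j_i}|\ge\min(s_i,t_i)\}}|v_{j_i}|\,d\mu\ge\int_{A_i}|v_{j_i}|\,d\mu+\eps$ and hence $Q(t)\ge\alpha+\eps$ for every fixed $t$, contradicting $Q(t)\downarrow\alpha$. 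It is this near-optimal choice of the $t_i$ --- not the bound on $\mu(A_i)$ --- that is the missing idea; once it is in place the rest of your argument goes through.
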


\begin{proof}
See \cite{ball1989remarks}.
\end{proof}

\begin{lemma}[Kantorovich metric] \label{Appendix:Kantorovich}
Let $(X,d)$ be a metric space. The Kantorich metric on $\M^+(X)$ generates the same topology as the weak* topology of measures.
\end{lemma}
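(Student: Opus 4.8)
The plan is to establish that the two topologies coincide by proving the equivalence of the associated notions of sequential convergence. This reduction is legitimate because the Kantorovich metric is, by construction, metrisable, and on the bounded-mass subsets of $\M^+(X)$ — which are the only ones arising in the applications — the weak* topology is metrisable as well; it therefore suffices to show that, for $\mu_n,\mu\in\M^+(X)$, one has $\|\mu_n-\mu\|_K\to 0$ if and only if $\mu_n\rightharpoonup^*\mu$. Throughout I use that the unit ball of the Kantorovich norm consists of functions $\phi$ with both $\|\phi\|_\infty\le 1$ and $\mathrm{Lip}(\phi)\le 1$.

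The easy direction is that Kantorovich convergence implies weak* convergence. If $\|\mu_n-\mu\|_K\to 0$, then by definition $\left|\int_X\phi\,d\mu_n-\int_X\phi\,d\mu\right|\le\|\mu_n-\mu\|_K$ for every admissible $\phi$, and by homogeneity the same limit $\int_X\phi\,d(\mu_n-\mu)\to 0$ holds for every bounded Lipschitz $\phi$. Testing against the constant $1$, whose Lipschitz norm is $1$, gives $\mu_n(X)\to\mu(X)$, so the masses stay bounded. Any $\psi\in C_0(X)$ is uniformly continuous and is approximated uniformly by bounded Lipschitz functions, for instance by the inf-convolutions $\psi_\lambda(x)=\inf_y[\psi(y)+\lambda d(x,y)]$; splitting $\int_X\psi\,d(\mu_n-\mu)$ into a Lipschitz contribution and a uniform error controlled by the bounded masses then yields $\int_X\psi\,d\mu_n\to\int_X\psi\,d\mu$.

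The substantial direction is that weak* convergence implies Kantorovich convergence, and here the difficulty is that the supremum defining $\|\mu_n-\mu\|_K$ ranges over an infinite family of test functions and must be shown to vanish uniformly. First I would note that weak* convergence forces $M:=\sup_n\mu_n(X)<\infty$. Then, fixing $\eps>0$, I would invoke tightness to choose a compact $K\subset X$ with $\mu(X\setminus K)<\eps$ and $\limsup_n\mu_n(X\setminus K)<\eps$. On $K$ the family $\{\phi|_K:\|\phi\|_{Lip}\le 1\}$ is uniformly bounded and equicontinuous, so by Arzelà–Ascoli it is totally bounded in $C(K)$; cover it by finitely many uniform $\eps$-balls centred at $\phi_1,\dots,\phi_N$. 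For an arbitrary admissible $\phi$ choose $\phi_i$ with $\|\phi-\phi_i\|_{C(K)}<\eps$ and bound $\int_X\phi\,d(\mu_n-\mu)$ by the contribution on $X\setminus K$ (at most $(\mu_n+\mu)(X\setminus K)\le 2\eps$ for large $n$, since $\|\phi\|_\infty\le 1$), the replacement error on $K$ (at most $\eps(M+\mu(X))$), and the finitely many terms $\int_X\phi_i\,d(\mu_n-\mu)$, each of which tends to $0$. Taking the supremum over $\phi$ and then $n\to\infty$ gives $\limsup_n\|\mu_n-\mu\|_K\le C\eps$ for a constant $C$ depending only on $M$ and $\mu(X)$, and letting $\eps\downarrow 0$ concludes.

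The main obstacle is precisely this uniformity in the second direction, and within it the tightness step: controlling the mass outside a compact set uniformly in $n$ is what fails on a general metric space and requires either that $X$ be Polish (so that a weak* convergent sequence of bounded positive measures is tight, via Prokhorov's theorem) or, as in every use of this lemma in the present work, that $X$ be compact. I would therefore carry out the argument in the compact case, where $K=X$, the tightness step is vacuous, $C_0(X)=C(X)$, and the Arzelà–Ascoli reduction applies verbatim to the whole Lipschitz unit ball; the general case follows along identical lines once tightness is supplied. I would also flag that, because the Kantorovich norm detects total mass by testing against the constant $1$, the relevant weak* topology on a non-compact $X$ is the narrow ($C_b$) one rather than the vague ($C_0$) one — a distinction that disappears entirely on the compact spaces to which the lemma is applied.
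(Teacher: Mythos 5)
Your proof is correct, but note that the paper itself offers no argument here: its ``proof'' is a bare citation to Kristensen--Rai\c{t}\u{a}, so there is no internal approach to compare against. What you supply is the standard bounded-Lipschitz-metric argument, and both halves are sound: the easy direction via inf-convolution approximation of continuous functions by Lipschitz ones together with mass bounds obtained by testing against the constant $1$, and the substantive direction via Arzel\`a--Ascoli compactness of the Lipschitz unit ball in $C(K)$, which is exactly the mechanism that upgrades the pointwise convergence $\int\phi_i\,d(\mu_n-\mu)\to 0$ for finitely many test functions to the uniform statement $\|\mu_n-\mu\|_K\to 0$. Your most valuable observation is one the paper glosses over: as literally stated, for an arbitrary metric space the lemma is false (e.g.\ $\delta_n\to 0$ vaguely on $\R$ while $\|\delta_n\|_K=1$), and the equivalence genuinely requires either compactness of $X$ or narrow convergence plus Prokhorov tightness on a Polish space. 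Since every invocation of the lemma in this paper is on a compact metric space ($\overline{\Omega}$, $e_{f_i,i\in\N}$, or products thereof), restricting the proof to that case, as you do, is both legitimate and the honest reading of the statement; the only minor point worth making explicit is that the reduction from topologies to sequences uses that $\M^+(X)$ is exhausted by mass-bounded sets that are open in both topologies, on each of which both topologies are metrisable.
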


\begin{proof}
See \cite{kristensen2019oscillation}.
\end{proof}

\begin{lemma} \label{Appendix:area-strict}
Let $\eta\in \M(\Omega,\R^d)$, $\Omega\subset \R^n$ open bounded set, and $(\phi_\eps)_{0<\eps\leq 1}$ be a family of standard mollifiers, $supp(\phi_1)\subset \B$. Then
\begin{align}
\eta\star \phi_\eps \xrightarrow{\text{area-strictly}} \eta.
\end{align}
\end{lemma}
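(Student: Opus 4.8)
The plan is to unwind the definition of area-strict convergence into two scalar statements. Writing $\eta\star\phi_\eps$ for the (smooth) density and identifying it with the measure $(\eta\star\phi_\eps)\L^n$, set $\rho_\eps=(\L^n,\eta\star\phi_\eps)$ and $\rho=(\L^n,\eta)$ as $\R^{1+d}$-valued measures on $\Omega$. By definition, $\eta\star\phi_\eps\to\eta$ area-strictly means $\eta\star\phi_\eps\rightharpoonup^*\eta$ in $C_0(\Omega,\R^d)^*$ together with $(\L^n,\eta\star\phi_\eps)\xrightarrow{s}(\L^n,\eta)$, and the latter, being strict convergence of $\rho_\eps$, reduces to $|\rho_\eps|(\Omega)\to|\rho|(\Omega)$. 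Using the Lebesgue decomposition $\eta=\frac{d\eta}{d\L^n}\L^n+\eta^s$ and the area-functional identity $\A(v):=|(\L^n,v)|(\Omega)=\int_\Omega\sqrt{1+|\tfrac{dv}{d\L^n}|^2}\,dx+|v^s|(\Omega)$ (recorded in the discussion of $\mu$-strict convergence), the mass condition becomes $\A(\eta\star\phi_\eps)\to\A(\eta)$. So the whole statement reduces to weak* convergence plus convergence of the area functional.

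For the weak* convergence I would use the standard mollifier duality: for $\psi\in C_0(\Omega,\R^d)$ and $\eps$ small enough that $\supp\psi+\B_\eps\subset\Omega$, Fubini gives $\int_\Omega\psi\cdot(\eta\star\phi_\eps)\,dx=\int_\Omega(\check\phi_\eps\star\psi)\cdot d\eta$, and $\check\phi_\eps\star\psi\to\psi$ uniformly since $\psi$ is uniformly continuous with compact support; this yields $\eta\star\phi_\eps\rightharpoonup^*\eta$. Because the first component of $\rho_\eps$ is the fixed measure $\L^n$, this gives $\rho_\eps\rightharpoonup^*\rho$, and since the total variation is the supremum over $\varphi\in C_0(\Omega,\R^{1+d})$ with $\|\varphi\|_\infty\le1$ of $\int\varphi\cdot d\rho_\eps$, it is weak*-lower semicontinuous; hence $\liminf_\eps\A(\eta\star\phi_\eps)\ge\A(\eta)$.

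The substance is the matching upper bound $\limsup_\eps\A(\eta\star\phi_\eps)\le\A(\eta)$, and the key idea is to treat both components of $\rho$ together. On the interior $\Omega_{-\eps}=\{x:d(x,\partial\Omega)>\eps\}$ one has $\rho\star\phi_\eps(x)=(1,\eta\star\phi_\eps(x))$, so $\sqrt{1+|\eta\star\phi_\eps(x)|^2}=|\rho\star\phi_\eps(x)|$, and Jensen's inequality for the convex $1$-homogeneous Euclidean norm together with Fubini gives $\int_{\Omega_{-\eps}}|\rho\star\phi_\eps|\,dx\le\int_\Omega\big(\int_{\Omega_{-\eps}}\phi_\eps(x-y)\,dx\big)\,d|\rho|(y)\le|\rho|(\Omega)=\A(\eta)$. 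On the boundary layer $\Omega\setminus\Omega_{-\eps}$ I would estimate $\sqrt{1+|\eta\star\phi_\eps|^2}\le1+|\eta\star\phi_\eps|$, bound $\int_{\Omega\setminus\Omega_{-\eps}}|\eta\star\phi_\eps|\,dx\le|\eta|(\Omega\setminus\Omega_{-2\eps})$ by the same Fubini argument, and note that both $\L^n(\Omega\setminus\Omega_{-\eps})$ and $|\eta|(\Omega\setminus\Omega_{-2\eps})$ tend to $0$ because $\Omega_{-\eps}\uparrow\Omega$ and $\L^n,|\eta|$ are finite and continuous from below. I expect this boundary-layer step to be the main obstacle, since the area integrand lives over all of $\Omega$ whereas the clean Jensen bound only controls $\Omega_{-\eps}$; the rest is routine. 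Combining the two one-sided inequalities gives $\A(\eta\star\phi_\eps)\to\A(\eta)$, which with the weak* convergence is precisely area-strict convergence.
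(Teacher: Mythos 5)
Your proof is correct and takes essentially the same route as the paper's: the lower bound comes from weak* lower semicontinuity of the total variation, and the upper bound from Jensen's inequality applied to the mollification of the joint measure $(\L^n\mres\Omega,\eta)$ together with a vanishing boundary-collar error. The only cosmetic difference is that you split the domain into $\Omega_{-\eps}$ and the collar $\Omega\setminus\Omega_{-\eps}$, whereas the paper writes $(\L^n\mres\Omega,\eta\star\phi_\eps)$ as $(\L^n\mres\Omega,\eta)\star\phi_\eps$ minus an explicit correction measure supported near $\partial\Omega$ and bounded by $2\L^n(\Omega\setminus\Omega_{-\eps})$; both arguments rest on the same two estimates.
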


\begin{proof}
Because $(\L^n\mres \Omega,\eta_\eps)\rightharpoonup^* (\L^n\mres\Omega,\eta)$, we immediately obtain the lower bound
\begin{align}
\liminf_{\eps\to 0} |(\L^n,\rho_\eps)|(\Omega) \geq |(\L^n,\rho)|(\Omega).
\end{align}
To prove the upper semi-continuity of the above quantity, notice the following equality:
\begin{align}
(\L^n\mres \Omega,\rho_\eps) = (\L^n\mres \Omega,\rho)\star \phi_\eps - (\L^n\mres \Omega_{-\eps} \star(\delta_0-\phi_\eps),0 ),
\end{align}
where
\begin{align}
\Omega_{-\eps} = \{x\in \Omega:d(x,\partial \Omega)>\eps\}.
\end{align}
Using then Jensen's inequality
\begin{align}
\limsup_{\eps\to 0} |(\L^n,\rho_\eps)|(\Omega) \leq & \limsup_{\eps\to 0} |(\L^n\mres \Omega,\rho)\star \phi_\eps|(\Omega) + | (\L^n\mres \Omega_{-\eps} \star(\delta_0-\phi_\eps),0 ) |(\Omega) \\
\leq & |(\L^n\mres \Omega,\rho)|(\Omega) + \limsup_{\eps\to 0} 2\L^n(\Omega\setminus\Omega_{-\eps}) = |(\L^n,\rho)|(\Omega)
\end{align}
\end{proof}

\begin{theorem}[Atomic decomposition] \label{Appendix:atomic_decomposition}
Let $\mu\in \M(\Omega)$. Then there exists a purely atomic measure $\mu^a$ and a non-atomic measure $\mu^{n-a}$ such that $\mu = \mu^a + \mu^{n-a}$.
\end{theorem}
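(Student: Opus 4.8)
The plan is to reduce the claim to a finite positive measure, collect the (necessarily countably many) points carrying positive mass, and take $\mu^a$ to be the restriction of $\mu$ to that set. First I would pass to the total variation $|\mu|\in\M^+(\Omega)$, which is a finite positive measure, and observe that locating the atoms of $\mu$ amounts to locating the points $x$ with $|\mu|(\{x\})>0$ (equivalently $\mu^+(\{x\})>0$ or $\mu^-(\{x\})>0$ in the Jordan decomposition). The set $S=\{x\in\Omega:|\mu|(\{x\})>0\}$ is at most countable: for each $n$ the level set $S_n=\{x:|\mu|(\{x\})>1/n\}$ has at most $n\,|\mu|(\Omega)$ elements, since those points are disjoint and $|\mu|$ is finite, and $S=\bigcup_n S_n$.

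Next I would define the decomposition directly. Enumerating $S=\{x_k\}_{k}$, set
\begin{align}
\mu^a := \mu\mres S = \sum_k \mu(\{x_k\})\,\delta_{x_k}, \qquad \mu^{n-a} := \mu - \mu^a = \mu\mres(\Omega\setminus S).
\end{align}
By construction $\mu^a$ is a countable combination of Dirac masses, hence purely atomic, while $\mu^{n-a}(\{x\})=0$ for every $x\in\Omega$, since for $x\in S$ the full point mass has been removed and for $x\notin S$ there was none to begin with.

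The only genuinely nontrivial step is to upgrade the pointwise statement ``$\mu^{n-a}$ annihilates every singleton'' to the full assertion that $\mu^{n-a}$ is non-atomic in the measure-theoretic sense, i.e.\ that every Borel set $A$ with $|\mu^{n-a}|(A)>0$ splits into two pieces of positive measure. I would isolate this as a lemma valid on the separable metric space $\Omega\subset\R^n$: any atom $A$ of a finite positive Borel measure $\nu$ is concentrated at a point. To prove it, partition $\R^n$ at each scale $k$ into disjoint half-open dyadic cubes of side $2^{-k}$. Because $A$ is indivisible, each cube $Q$ satisfies either $\nu(A\cap Q)=0$ or $\nu(A\cap Q)=\nu(A)$, and since the cubes are disjoint exactly one cube $P_k$ has full measure $\nu(A\cap P_k)=\nu(A)$; disjointness also forces $P_{k+1}\subset P_k$, so $\bigcap_k P_k=\{x_0\}$. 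Continuity from above (using $\nu(\Omega)<\infty$) then gives $\nu(\{x_0\})=\lim_k\nu(A\cap P_k)=\nu(A)>0$, so $A$ differs from $\{x_0\}$ by a null set. Applying this lemma to $|\mu^{n-a}|$, any atom would produce a point of positive mass, contradicting the construction of $S$; hence $\mu^{n-a}$ has no atoms.

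The main obstacle is precisely this lemma, which is the sole place where the ambient structure $\Omega\subset\R^n$ (separability, dyadic cubes, continuity from above for finite measures) is used; the reduction to the positive case and the countability of $S$ are routine. If instead the paper takes ``non-atomic'' to mean simply that every singleton is null, this last paragraph collapses and the proof is immediate from the first two steps.
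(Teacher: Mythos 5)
Your proposal is correct, but note that the paper does not actually prove \thref{Appendix:atomic_decomposition} at all: its ``proof'' is a citation to \cite{fonseca2007modern}, p.~13, where the decomposition is obtained by general measure-theoretic means. Your argument therefore supplies a self-contained proof in place of a pointer to the literature, and it is the natural one for this setting: reduce to $|\mu|$, observe that $S=\{x:|\mu|(\{x\})>0\}$ is countable, set $\mu^a=\mu\mres S$ and $\mu^{n-a}=\mu\mres(\Omega\setminus S)$, and then — the only substantive step, which you correctly isolate — show that on $\Omega\subset\R^n$ an atom of a finite positive Borel measure is necessarily concentrated at a point, so that ``every singleton is null'' really does imply ``non-atomic'' in the indivisibility sense. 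This exploitation of the metric structure buys something the abstract statement does not make explicit: the atomic part comes out as a countable sum of Dirac masses, $\mu^a=\sum_k\mu(\{x_k\})\delta_{x_k}$, which is precisely the form the paper uses in the remark following \refp{counter_example_embedding_YM}, so your version is if anything better adapted to the paper than the cited one. One slip should be repaired: for \emph{half-open} dyadic cubes, nestedness alone does not give $\bigcap_k P_k\neq\emptyset$ — in dimension one, $P_k=[1-2^{-k},1)$ is a nested dyadic sequence with empty intersection — so the assertion $\bigcap_k P_k=\{x_0\}$ is unjustified as written. The fix is contained in your own next sentence: apply continuity from above to the decreasing sets $A\cap P_k$ first, obtaining $\nu\bigl(\bigcap_k (A\cap P_k)\bigr)=\lim_k\nu(A\cap P_k)=\nu(A)>0$, whence the intersection is nonempty and, the diameters shrinking to zero, equals a single point $x_0$ with $\nu(\{x_0\})=\nu(A)$; alternatively, work with closed cubes, at the cost of their overlaps (boundaries), which are harmless here. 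Your closing hedge about the definition of ``non-atomic'' is moot: your lemma shows the two notions coincide for finite Borel measures on $\R^n$, so the proof stands under either reading.
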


\begin{proof}
See \citep{fonseca2007modern}{13}.
\end{proof}

\newpage
\bibliographystyle{alpha}
\bibliography{biblio}

\end{document}